\documentclass[reqno, 11pt, a4paper]{amsart}

\usepackage{amsfonts, amsthm, amsmath, amssymb}

\usepackage[dvipsnames]{xcolor}
\usepackage[
    colorlinks=true,
    linkcolor=MidnightBlue,
    citecolor=MidnightBlue,
    urlcolor=MidnightBlue
]{hyperref}

\usepackage{booktabs}
\usepackage[font=small, margin=40pt,figureposition=below]{caption}
\usepackage{graphicx}
\usepackage{tikz-cd}
\usepackage{enumitem}
\usepackage{comment}
\usepackage[margin=3.1cm]{geometry}

\numberwithin{equation}{section}

\renewcommand{\d}{\mathrm{d}}

\renewcommand{\rho}{\varrho}
\newcommand{\0}{\mathbf{0}}
\newcommand{\PP}{\mathbb{P}}
\newcommand{\ZZ}{\mathbb{Z}}
\newcommand{\ZZp}{\mathbb{Z}_{\mathrm{prim}}}
\newcommand{\NN}{\mathbb{N}}
\newcommand{\QQ}{\mathbb{Q}}
\newcommand{\RR}{\mathbb{R}}

\renewcommand{\leq}{\leqslant}
\renewcommand{\le}{\leqslant}
\renewcommand{\geq}{\geqslant}
\newcommand{\h}{\mathbf{h}}
\newcommand{\q}{\mathbf{q}}
\newcommand{\x}{\mathbf{x}} 
\newcommand{\f}{\mathbf{f}}
\newcommand{\g}{\mathbf{g}}
\newcommand{\y}{\mathbf{y}}
\newcommand{\X}{\mathbf{X}}
\newcommand{\Y}{\mathbf{Y}}
\renewcommand{\c}{\mathbf{c}}
\newcommand{\z}{\mathbf{z}}
\renewcommand{\r}{\mathbf{r}}
\newcommand{\ve}{\varepsilon}
\newcommand{\uu}{\underline}
\newcommand{\abs}[1]{\left\lvert#1\right\rvert}
\newcommand{\eps}{\varepsilon}

\DeclareMathOperator{\rank}{rank}

\DeclareMathOperator{\vol}{vol}
\DeclareMathOperator{\meas}{vol}
\DeclareMathOperator{\prim}{prim}
\DeclareMathOperator{\adj}{adj}

\newtheorem{theorem}{Theorem}[section] 
\newtheorem{corollary}[theorem]{Corollary}
\newtheorem{lemma}[theorem]{Lemma}

\theoremstyle{definition}

\begin{document}

\makeatletter
\def\subjclassname{$2020$ Mathematics Subject Classification}
\makeatother

	\title[Linear growth for biprojective hypersurfaces of small dimension]{Towards linear growth for biprojective\\ hypersurfaces of small dimension}
	\author{Tim Browning}
	\address{ISTA\\
		Am Campus 1\\
		3400 Klosterneuburg\\
		Austria}
	
	\email{tdb@ist.ac.at}

	\author{Matteo Verzobio}
	\address{ISTA\\
		Am Campus 1\\
		3400 Klosterneuburg\\
		Austria}
    \address{KU Leuven \\ Celestijnenlaan 200B\\ 3001 Heverlee (Leuven)\\ Belgium}
	\email{matteo.verzobio@gmail.com}

	 \subjclass{11D45 (11D09, 11G35, 11G50, 14G05)}
	
	\date{}
	
	\begin{abstract}		After removing suitable thin subsets, 
		we count rational points of bounded height on some 
		 hypersurfaces of small bidegree in 
		$\mathbb{P}^3\times\mathbb{P}^3$ and in 
		$\mathbb{P}^3\times\mathbb{P}^2$. 
One of the ingredients is a new estimate for the number of rational points of bounded height on diagonal quadratic surfaces which improves as the coefficients grow.
	\end{abstract}

	\maketitle

	\thispagestyle{empty}

\setcounter{tocdepth}{1}
\tableofcontents

	\section{Introduction}

Let $c_1,\dots,c_4\in \ZZ$ be nonzero such that $\gcd(c_1,\dots,c_4)=1$. 
Let $Z$ be the hypersurface in $\mathbb{P}^3\times \mathbb{P}^3$ defined by
\begin{equation}\label{eq:eq}
c_1x_1^ay_1^b+c_2x_2^ay_2^b+c_3x_3^ay_3^b+c_4x_4^ay_4^b=0,
\end{equation}
for given integers $a,b\geq 1$. This variety is smooth if $a$ or $b$ is equal to $1$, but singular otherwise. 
Since $-K_Z=\mathcal{O}(4-a,4-b)|_Z$, we see that $Z$
 is Fano if 
$1\le a\le 3$ and $1\le b\le 3$, as we now assume. 
The associated anticanonical height $H:Z(\QQ)\to \RR_{>0}$
is defined to be 
$
H(x,y)=|\x|^{4-a}|\y|^{4-b},
$
if a point $(x,y)\in Z(\QQ)$ is represented by a vector
$(\x,\y)\in \ZZ_{\mathrm{prim}}^4\times \ZZ_{\mathrm{prim}}^4$, where $|\cdot|$ denotes the supremum norm on $\RR^4$.
Let
\begin{equation}\label{eq:duck}
N(U,B)=\#\bigl\{(x,y)\in U:H(x,y)\le B\bigr\},
\end{equation}
for any subset $U\subset Z(\QQ)$. 
A weak form of Manin's conjecture~\cite{manin} predicts that there should exist a thin subset $W\subset Z(\QQ)$ 
(in the sense of 
Serre~\cite[\S 3.1]{Ser08})
such that
\[
N(U,B)=O_{\ve,Z}(B^{1+\ve}),
\]
for any $\ve>0$,
where
 $U=Z(\QQ)\setminus W$.
In fact, if $Z$ is smooth 
then  $Z$ has Picard rank 2 and the Manin conjecture predicts that 
$N(U,B)\sim c_U B\log B$,
as $B\to\infty$, where $c_U$ is a constant whose value is predicted by   Peyre~\cite{Pey95}.
When $Z$ is singular we expect a similar asymptotic formula, but the power of $\log B$ can be larger.

When $a=b=1$, the 
Manin conjecture was settled by Spencer~\cite{Spencer}. He proved the conjecture for smooth diagonal hypersurfaces of bidegree $(1,1)$ in $\mathbb{P}^s\times\mathbb{P}^s$ for $s\geq 2$, using the circle method as the main tool. In doing so he needed to remove the thin set 
\begin{equation}\label{eq:thin1}
W_1=\left\{(x,y)\in Z(\QQ): \prod_{i=1}^4 x_i y_i =0\right\}.
\end{equation}
The cases $(a,b)\in \{(1,2), (2,1)\}$ were handled by
	Browning and Heath-Brown~\cite{BHB-quadbundle}. They achieved this by combining counting arguments that come from the geometry of numbers with the circle method. In this setting it was necessary to remove the thin set $W_1\cup W_2$,
	where $W_2$ is the set of 
	$(x,y)\in Z(\QQ)\setminus W_1$ such that  $\prod_{i=1}^4 c_i x_i$ is a rational square if $(a,b)=(1,2)$, or such that $\prod_{i=1}^4c_i y_i$ is a rational square if $(a,b)=(2,1)$.
	 The case $(a,b)=(2,2)$ was studied by Browning and Hu in~\cite{BrowningHu}, where an asymptotic formula is proved for smooth hypersurfaces of bidegree $(2,2)$ in at least 35 variables.
 In the diagonal setting, substantially stronger results are available. Blomer and Brüdern~\cite[Theorem 1.1]{BlomerBruedern} proved the Manin conjecture for a broad class of multihomogeneous diagonal equations. Specialising their theorem to diagonal hypersurfaces of bidegree $(2,2)$,
 one obtains the expected asymptotic formula for rational points of bounded anticanonical height as soon as the diagonal equation has at least five summands. In both cases, the thin set $W_1$ needs to be removed.

 For $(a,b)\in \{(1,3), (3,1)\}$, 
 Batyrev and Tschinkel~\cite{BatyrevTschinkel} studied $Z$ in 1996 and famously produced
 the first counterexample to an earlier formulation of Manin's
	conjecture in which it was only allowed to remove rational points contained in proper subvarieties of $Z$. 
	Their example exhibits a dominant generically finite morphism
 \[f:Y\to Z,\] whose image contributes a growth rate of order $B(\log B)^3$, which 
 exceeds the expected growth of order
 $B\log B$. Since the image $f(Y)$ is Zariski dense in $Z$, these excess rational points cannot be removed by deleting a proper closed subset, thereby demonstrating (for the first time) that the removal of 
 thin sets of Type~II is an essential feature of refined versions of the Manin conjecture.
Our first result provides upper bounds for $N(U,B)$ that do not depend on the coefficients of $Z$.

\begin{theorem}\label{thm:appl}
		Let $a,b\in\{1,2,3\}$ and let $\ve>0$. There exists 
		 a thin subset $W\subset Z(\QQ)$ such that 
		\[
		N(U,B)\ll_{\ve} B^{\gamma_{a,b}+\ve},
		\]
		where 
		$U=Z(\QQ)\setminus W$ and 
		the exponents $\gamma_{a,b}$ are listed in the following  table:
		\begin{table}[h!]
			\centering
			\begin{tabular}{c|ccc}
				$a \backslash b$ & $1$ & $2$ & $3$ \\\hline
				$1$ & $1$ & $1$ & $\tfrac{12}{7}$ \\
				$2$ & $1$ & $1$ & $\tfrac{44}{25}$ \\
				$3$ & $\tfrac{12}{7}$ & $\tfrac{44}{25}$ & $\tfrac{12}{7}$
			\end{tabular}
		\end{table}
	\end{theorem}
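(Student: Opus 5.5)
Since $Z$ is symmetric under swapping $(a,\x)$ with $(b,\y)$, I only treat $a\le b$. The cases with $a,b\le 2$ should follow from the literature quoted above, after removing $W_1\cup W_2$ (and only $W_1$ when $a=b=2$). Spencer's result gives $(1,1)$, Browning--Heath-Brown give $(1,2)$, and the diagonal theorem of Blomer--Brüdern gives $(2,2)$. The one issue is that those results have implied constants depending on the $c_i$, whereas Theorem~\ref{thm:appl} asks for uniformity. So I would redo the upper bound directly, by fibring over one factor. Fix $\y$ with $|\y|\sim Y$ and $\prod y_i\ne 0$. The fibre is then a diagonal form of degree $a$ in $\x$ with coefficients $c_iy_i^b$.

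When $a=1$ the fibre is a hyperplane. Its primitive points with $|\x|\le X$ number $\ll 1+X^3/|\mathbf{d}|$, where $\mathbf{d}=(c_iy_i^b)$ is taken up to its gcd. Summing over $\y$ with $X^{4-a}Y^{4-b}\le B$ and over dyadic ranges then gives $B^{1+\ve}$, uniformly in the $c_i$.

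When $a=2$ the fibre is a diagonal quadric. Here I would use the new estimate advertised in the abstract, which bounds the number of points of height $\le X$ on a diagonal quadric surface by roughly $X^{2+\ve}/|\text{coeff}|^{1/2}+X^{1+\ve}$, improving as the coefficients grow. This is applied off the locus where $-\prod c_iy_i^b$ is a square, which is exactly the split-quadric locus $W_2$ and is thin. Summing over $\y$ should again give $B^{1+\ve}$.

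For $b=3$ the Batyrev--Tschinkel phenomenon forces us to remove a Type~II thin set. I would take $W$ to be $W_1$ together with the points whose $\x$-fibre (for $a\le 2$) or either fibre is non-generic in the relevant sense. For $a=1$ and for $a=2$ the previous fibration argument still works, but with $Y^{4-b}=Y$ the loss is bigger. Let me spell out $a=1$. The sum over $\y$ is
\[
\sum_{|\y|\le Y}\Bigl(1+\frac{X^3}{|c\,\y^3|}\Bigr)\ll Y^4+X^3Y^{1+\ve},
\]
where $|c\,\y^3|$ is shorthand for the size $|\mathbf{d}|$ of the fibre's coefficient vector. The height condition is $X^3Y\le B$. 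The term $X^3Y^{1+\ve}$ is already fine, so the trouble is $Y^4$, which comes from $\y$ whose fibres contain few points. To control it I would swap roles: for fixed $\x$, the fibre is a diagonal cubic surface in $\y$. A uniform bound for points of height $\le Y$ on such a surface, of the shape $Y^{4/3+\ve}$ (Heath-Brown type) or better, gives the complementary estimate $X^4Y^{4/3}$. Balancing $\min(Y^4,\,X^4Y^{4/3})$ against the constraint $X^3Y\le B$ should produce the exponent $12/7$. For $(2,3)$ the analogous balance, using the quadric bound in one direction and the cubic bound in the other, should produce $44/25$, and for $(3,3)$ the symmetric balance of two cubic-surface bounds gives $12/7$.

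The main obstacle is the bookkeeping in this balancing. Each bound must be uniform in the coefficients, which is precisely why the new quadric estimate is needed, and the thin set must be chosen so that the fibres we count satisfy the genericity hypotheses of those bounds, for instance the quadric being non-split. I would first verify the exponent optimisation, and then check that the exceptional fibres form a thin set. They do, since they are defined by a square condition or by lying in a proper closed subset.
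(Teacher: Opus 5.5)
Your proposal has genuine gaps. The most basic one is that fibring over one factor and balancing, with the fibrewise bounds available here, cannot reach the table when $a=b\in\{2,3\}$. For $(3,3)$ the height condition is $|\x||\y|\le B$. At $X=Y=B^{1/2}$, summing Salberger's uniform bound over one factor costs $Y^4X^{12/7}=B^{20/7}$. The coefficient-sensitive Lemma~\ref{cor:a3} is still at least $B^{18/7}$ there, and the Fourier bound gives $B^{5/2}$. So no balance of these bounds yields $12/7$. Similarly, for $(2,2)$ at $X=Y=B^{1/4}$, the $B^{4/3+\ve}$ term of Theorem~\ref{thm:main} summed over $\y$ is already $B^{4/3}$.

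The missing idea is the substitution $z_i=x_iy_i$. Since $|z_i|\le|\x||\y|$, every point counted gives a solution of the single surface $c_1z_1^a+\cdots+c_4z_4^a=0$ with $|\z|\le B^{1/(4-a)}$. Each $\z\in\ZZ_{\neq0}^4$ has only $O_\ve(B^\ve)$ factorisations. Comtat's uniform bound \eqref{eq:comtat} then gives $B^{1+\ve}$ for $(2,2)$. Salberger's uniform $B^{12/7+\ve}$ gives $(3,3)$, once you remove the points with $c_ix_i^3y_i^3+c_jx_j^3y_j^3=0$, which contain all the lines. Note also that Blomer--Br\"udern needs at least five summands, so it does not cover $(2,2)$ here.

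For $(2,1)$ and $(2,3)$ you invoke a quadric bound of shape $X^{2+\ve}/|\mathrm{coeff}|^{1/2}+X^{1+\ve}$, which is not available. Theorem~\ref{thm:main} has a $B^{4/3+\ve}$ floor, and its $B^{2+\ve}$ term carries the factor $T^{1/2}\|Q\|^{1/2}/\Delta_Q^{1/3}$, which is not even bounded uniformly in the $c_i$. For $(2,1)$, summing that term over $\y\sim\Y$ gives about $X^2Y^{19/6}=BY^{1/6}$ on $X^2Y^3=B$. Even your claimed shape would give $X^2Y^{7/2}$. The quadric theorem plays no role in this result; it is only used for the $\mathbb{P}^3\times\mathbb{P}^2$ fourfold.

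What replaces it is the Hua-type Lemma~\ref{lem:CSab}. H\"older's inequality removes the $c_i$ and reduces matters to $\int_0^1|S(\alpha)|^4\d\alpha$ with $S(\alpha)=\sum_{x,y}e(\alpha x^ay^b)$. Cauchy--Schwarz and the divisor bound then give $N(U,\X,\Y)\ll_\ve(XY)^\ve X^2Y^3$ for $a\in\{2,3\}$ and $(XY)^\ve X^3Y^3$ for $a=1$. This settles $(1,1)$ and $(2,1)$ at once, uniformly in the $c_i$.

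For $(2,3)$ you must balance $X^3Y^2$ against a cubic-surface bound that decays in the coefficient height (about $X^2$) of the fibre. That is Lemma~\ref{lem:cubicab}, from Walsh's global determinant method with Salberger's refinement. It gives $Y^{12/7}X^{4-8/21}$ and the crossover $X=B^{6/25}$. With only the uniform $Y^{12/7}$ bound, the crossover is $X=B^{2/11}$ and you get $20/11>44/25$.

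Finally, your $(1,3)$ numerology is off. A uniform $Y^{4/3}$ bound is known only under Heath-Brown's rank hypothesis, and it would give $4/3$, not $12/7$. Unconditionally, Salberger's bound alone gives $X^4Y^{12/7}\le B^{12/7}X^{-8/7}$, with no balancing needed. You also do not need the hyperplane count, whose form $1+X^3/|\mathbf d|$ is in any case false when the plane contains a short rank-$2$ sublattice, e.g.\ $\mathbf d=(1,N,-1,-N)$.
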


We emphasise that the implied constant only depends on $\ve$ and not on the 
individual coefficients $c_1,\dots,c_4$ in \eqref{eq:eq}. The construction of the deleted thin set $W$ will be carried out in Section~\ref{final}.
	 The main bottleneck in obtaining the expected exponent $\gamma_{a,b}=1$ is that we do not have sharp bounds for the number of rational points of bounded height on smooth cubic surfaces. The best known bound is due to Salberger~\cite{salcubic}, who has proved that 
 there are $O_{\ve}(B^{\frac{12}{7}+\ve})$
 rational points of bounded height on the open subset of a cubic surface 
 formed by deleting the 27 lines, and the implied constant only depends on $\ve$.
 Heath-Brown 
\cite{HBcubic} has
 proved that the exponent $12/7$ can be improved to $4/3$ under a certain {\em rank hypothesis}. Assuming this hypothesis would allow us to improve our bounds, giving $\gamma_{1,3}=\gamma_{3,1}=\gamma_{3,3}=4/3$.

 \medskip

 Our next result concerns the  fourfold $Z'\subset \mathbb{P}^3\times \mathbb{P}^2$ defined by
\[
\ell_1(\y)x_1^2+\ell_2(\y)x_2^2+\ell_3(\y)x_3^2+\ell_4(\y)x_4^2=0,
\]
where $\y=(y_1,y_2,y_3)$ and
 $\ell_1,\ell_2,\ell_3,\ell_4\in \ZZ[\y]$
 are   linear forms such that 
 any 
 three of them are linearly independent. Then $Z'$ is a  smooth Fano variety 
 and  the  associated anticanonical height $H:Z'(\QQ)\to \RR_{>0}$
is defined to be 
$
H(x,y)=|\x|^{2}|\y|^{2},
$
if a point $(x,y)\in Z'(\QQ)$ is represented by a vector
$(\x,\y)\in \ZZ_{\mathrm{prim}}^4\times \ZZ_{\mathrm{prim}}^3$.
Let
\begin{equation}\label{eq:eq'}
N(U',B)=\#\bigl\{(x,y)\in U':H(x,y)\le B\bigr\},
\end{equation}
for any subset $U'\subset Z'(\QQ)$. 
The Manin conjecture predicts
 that
there exists a thin subset of rational points $W'\subset Z'(\QQ)$ such that 
$N(U',B)\sim c_{U'} B\log B$, 
as $B\to\infty$, where
 $U'=Z'(\QQ)\setminus W'$ and $c_{U'}$ is a constant predicted by 
 Peyre \cite{Pey95}. 
 We aim to prove an estimate  that is as close as possible to the bound 
$N(U',B)=O_{\ve,Z'}(B^{1+\varepsilon})$, for any $\ve>0$.

\begin{theorem}\label{thm:aplllinear}
Let $\ve>0$. 
 There exists a thin subset $W'\subset Z'(\QQ)$ such that
 \[
 N(U',B)\ll_{\ve,Z'} B^{\frac{18}{17}+\ve},
 \]
  where
 $U'=Z'(\QQ)\setminus W'$.
\end{theorem}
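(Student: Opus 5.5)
We view $Z'$ as a family of diagonal quadric surfaces over $\mathbb{P}^2$. For each primitive $\y\in\ZZ^3$ with $|\y|\le Y$, the fibre is the quadric $Q_\y:\sum_i \ell_i(\y)x_i^2=0$ in $\mathbb{P}^3$. Its coefficients $c_i=\ell_i(\y)$ have size $\ll |\y|$. Since any three of the $\ell_i$ are linearly independent, at most one $c_i$ can vanish for $\y\ne \0$.

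We take $W'$ to be the union of three sets. The first is the points with $\prod_i \ell_i(\y)\prod_i x_i=0$. The second is the points for which $\prod_i\ell_i(\y)$ is a rational square; there the fibre quadric is split, with rulings defined over $\QQ$, and this is the Type II thin set, the image of a double cover. The third consists of points lying on lines in the fibres.

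After dyadic decomposition $|\x|\sim X$, $|\y|\sim Y$ with $X^2Y^2\le B$, we need to bound
\[
\sum_{\substack{|\y|\sim Y\\ \prod \ell_i(\y)\ne\square}} \#\{\x\in\ZZp^4: |\x|\sim X,\ Q_\y(\x)=0\}.
\]
Two complementary bounds are available.

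\textbf{Fibre bound.} Here we use the paper's new estimate for diagonal quadratic surfaces whose bound improves as the coefficients grow. For a non-split diagonal quadric it should give roughly
\[
\ll X^{2+\ve}\big/\bigl(|c_1c_2c_3c_4|^{1/4}\bigr)+X^{1+\ve}.
\]
On generic $\y$ this saves a power of $Y$. We control the exceptional $\y$ for which some $|\ell_i(\y)|$ is unusually small with a lattice-point count. Summing over $\y$ then gives roughly $Y^3X^2/Y+Y^2X$, which is $\ll B$ when $X^2Y^2\le B$. The danger is the range where $X$ is small compared with $Y$: there the term $Y^2X$ beats $B$ once $Y\gg X$.

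\textbf{Dual bound.} In that range we swap roles. Fixing $\x$, the condition $\sum_i\ell_i(\y)x_i^2=0$ is a linear form in $\y$ with coefficients of size $X^2$. A primitive lattice of rank $2$ and determinant $\asymp$ the coefficient size contains $\ll 1+Y^2/X^2$ points $\y$ of norm $\le Y$. Summing over $\x$ gives $\ll X^4+X^2Y^2$. This is $\ll B$ unless $X$ is large.

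\textbf{Interpolation.} Balancing these estimates only fails in a middle range, roughly $X\approx Y\approx B^{1/4}$ with deviations. The error terms there must be interpolated: the uniformity in the coefficient size of the fibre bound against a Heath-Brown style determinant-method bound for the $\y$-count. I expect this interpolation to produce the exponent $18/17$. Concretely, we combine an additional saving of the form $X^{2}Y^{-1/2}$ with a secondary term $X^{3/2}$ or similar, and optimise to get $B^{18/17}$.

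The main obstacle is making the fibre estimate uniform in the coefficients, with a saving measured by $|c_1\cdots c_4|$, while excluding only non-split fibres. We also need to bound the number of $\y$ for which the $\ell_i(\y)$ share large common square factors, since these reduce the effective saving. We handle these via divisor-type sums over the binary lattices $\{\y:d\mid \ell_i(\y)\}$. Finally, we check that the exceptional sets are genuinely thin: the line-contributions are Type I, and the square condition is Type II.
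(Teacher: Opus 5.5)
Your overall architecture matches the paper's: fix $\y$ and count on the diagonal quadric fibre, fix $\x$ and count on the line $\h(\x)\cdot\y=0$, then split according to the sizes of $X$ and $Y$. The gap is in the fibre input, and therefore in the step meant to produce $18/17$.

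Theorem~\ref{prop:main} does not give $X^{2+\ve}/|c_1c_2c_3c_4|^{1/4}+X^{1+\ve}$. Its coefficient-independent term is $X^{4/3+\ve}$: the $\asymp X^{4/3}$ planes of height $O(X^{1/3})$ from Siegel's lemma each contribute $O(1)$. For generic $\y$ with all $|\ell_i(\y)|\asymp Y$ the bound reads roughly $X^{4/3}+X^{5/3}Y^{-1/2}+X^{2}Y^{-1}$.

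The bounds you wrote down would in fact prove more than the theorem. Summed over the $\asymp Y^3$ values of $\y$ they give $X^2Y^2+XY^3$, and your dual count gives $X^2Y^2+X^4$. Under $X^2Y^2\le B$, the first is $\ll B$ whenever $X\ge Y$ and the second whenever $X\le Y$. So you would get $B^{1+\ve}$, and the ``middle range'' you describe does not exist. Your interpolation with a determinant-method $\y$-count, built from an unspecified saving ``$X^2Y^{-1/2}$'' and a term ``$X^{3/2}$ or similar'', is therefore not connected to anything and is not an argument.

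What actually gives the theorem is as follows. To sum Theorem~\ref{prop:main} over $\y$ you need two facts. First, $g_\y=\gcd_i(\ell_i(\y))$ divides the determinant of three of the forms. Second, $T_\y$ divides $\lambda_1\cdots\lambda_4$, where $\sum_i\lambda_i\ell_i=0$ with all $\lambda_i\neq0$. Hence both are $O_F(1)$. Lemma~\ref{lem:sumelligen} then handles the negative powers of $|A_i(\y)|$ and of $m_\y$, giving $N(U',\X,\Y)\ll X^{4/3+\ve}Y^3+X^{2+\ve}Y^{2+\ve}$. Compare this with the line count $X^{2+\ve}Y^2+X^4$ under $XY\le B^{1/2}$, splitting at $X=B^{9/34}$:
\begin{itemize}
\item below the split, $X^4\le B^{18/17}$;
\item above it, $X^{4/3}Y^3\le B^{3/2}X^{-5/3}\le B^{18/17}$.
\end{itemize}
So $18/17$ is simply the balance of $X^{4/3}Y^3$ against $X^4$.

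Two smaller points also need fixing.
\begin{itemize}
\item Your dual bound assumes the primitive normal $\h(\x)$ has size $\asymp X^2$. This fails pointwise: the $q_i(\x)$ can share a large factor, or $\q(\x)$ can be small. You need an averaged estimate $\sum_{|\x|\le X}|\h(\x)|^{-1}\ll X^{2+\ve}$, as in Lemma~\ref{lem:sumq}.
\item Your $W'$ must also contain the points with $\q(\x)=\0$. For instance, if $\ell_4=-(\ell_1+\ell_2+\ell_3)$, then $\{(1:1:1:1)\}\times\PP^2\subset Z'$. These $\asymp B^{3/2}$ points have all $x_i\neq 0$. For generic $\y$ they do not lie on rational lines of the non-split fibre $Q_\y$, so none of your three sets removes them.
\end{itemize}
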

 
 It would be straightforward to extend our argument to produce the same bound 
 for singular $Z'$ in which it is only assumed that the 
  coefficient matrix of 
  $\ell_1,\dots,\ell_4$ has rank $3$. 
 When  the linear forms $\ell_1,\dots,\ell_4$ depend on  only two variables in \eqref{eq:eq'}, the variety defines a threefold in $\PP^3\times \PP^1$, 
a case that has been resolved by Bonolis, Browning and Huang~\cite{BonolisBrowningHuang}, under certain technical assumptions on the linear forms. The case 
 in which  $\ell_1,\dots,\ell_4$ have four variables generalises  the special case
 $\ell_i(y_1,\dots,y_4)=y_i$, for $1\leq i\leq 4$, that was 
  handled by   Browning and Heath-Brown~\cite{BHB-quadbundle}.

 \medskip
 
	The main input in  Theorem~\ref{thm:aplllinear} is a new bound for the number of rational points of bounded height on
	diagonal quadratic surfaces in $\PP^3$, which improves as the coefficients of the form grow.		Let $A_1,\dots,A_4\in \ZZ$ be nonzero such that 
		$\gcd(A_1,\dots,A_4)=1$. Let 
	\[
	Q(\x)=A_1x_1^2+A_2x_2^2+A_3x_3^2+A_4x_4^2 ,
	\]
	where $\x=(x_1,x_2,x_3,x_4)$.
	Define
	\begin{equation}\label{eq:ham}
	N(Q,B)=\#\left\{\x\in\ZZ_{\prim}^4 : Q(\x)=0,\ |x_i|\le B\right\},
	\end{equation}
	and let $N^{0}(Q,B)$ be the same quantity with points lying on rational
	lines contained in the quadric $Q=0$ removed. Let
	\begin{equation}\label{eq:T}
	T=\gcd(A_1A_2,A_1A_3,A_1A_4,A_2A_3,A_2A_4,A_3A_4),
	\end{equation}
with $\Delta_Q=\abs{A_1A_2A_3A_4}$ and 
	$\|Q\|=\max_{1\leq i\leq 4}\abs{A_i}$.
	
	\begin{theorem}\label{thm:main}
	Let $\ve>0$. Then
		\[
		N^0(Q,B)\ll_{\ve} 
B^{\frac{4}{3}+\ve}+B^{\frac 53+\ve}\frac{T^{\frac{1}{2}}\|Q\|}{\Delta_Q^{\frac{1}{3}}}+B^{2+\ve}\frac{T^{\frac{1}{2}}\|Q\|^{\frac 12}}{\Delta_Q^{\frac{1}{3}}}.
		\]
	\end{theorem}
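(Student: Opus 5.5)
We argue via the standard parametrisation of a split quadric surface, combined with lattice point counting that exploits the size of the coefficients. The rough shape is this. Points on $Q=0$ not on lines come in two families (the two rulings). We use one family of lines to fibre the count, and we use the coefficients to force congruence conditions on $\x$ which cut the count down by a power of $\Delta_Q/T$ compared with the trivial bound.

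\textbf{Step 1 (reduction to a bilinear problem).} We write $A_1x_1^2+A_2x_2^2=-(A_3x_3^2+A_4x_4^2)$. We may assume $-A_1A_2$ and $-A_3A_4$ behave suitably; otherwise the quadric has few points and the bound is easier. Factoring each side over $\QQ(\sqrt{-A_1A_2})$, or alternatively using the classical description of primitive representations by binary forms, every solution $\x$ with $Q(\x)=0$ is attached to a common value $m=A_1x_1^2+A_2x_2^2$ with $|m|\ll \|Q\|B^2$. Then $N(Q,B)$ is bounded by
\[
\sum_{m}r_{12}(m)\,r_{34}(-m),
\]
where $r_{12}(m)$ counts representations of $m$ by $A_1x_1^2+A_2x_2^2$ in the box. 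This has to be done primitively, and it must account for the common factor $T$, which governs how $\gcd(x_i,x_j)$ can interact with the $A_i$.

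\textbf{Step 2 (three regimes).} We apply three estimates and take the minimum.

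The term $B^{4/3+\ve}$ will come from the uniform Heath-Brown/Browning type bound for points on a quadric surface off the lines. This bound is $B^{4/3+\ve}$ once we drop dependence on $\|Q\|$, provided $\Delta_Q$ is large relative to $B$. Concretely, we use the determinant method: the points lie on $O(B^{2/3}\Delta_Q^{-1/3+\ve})$ auxiliary conics or lines, and each conic carries $O(B^{\ve}\cdot B^{2/3})$ points.

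The term $B^{2+\ve}T^{1/2}\|Q\|^{1/2}\Delta_Q^{-1/3}$ comes from fixing $(x_3,x_4)$, or more efficiently a sublattice. For fixed $(x_3,x_4)$ the conic $A_1x_1^2+A_2x_2^2=m$ has $\ll B^\ve$ solutions. But $(x_3,x_4)$ is constrained: $x_1,x_2$ must satisfy congruences modulo divisors of $A_3A_4/\gcd$, and symmetrically for the other pair. Thus $\x$ lies in a union of $B^\ve$ lattices of determinant $\gg (\Delta_Q/T)^{1/2}$, up to the gcd loss measured by $T$. Counting in a box of side $B$ in such a lattice after fixing a 2-dimensional projection then gives the saving. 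The factor $\|Q\|^{1/2}$ arises from the range of $m$ relative to $B^2$.

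The middle term $B^{5/3}$ interpolates: we dissect by the size of $\gcd(m,\cdot)$ and use the conic-counting bound of Browning--Heath-Brown for $A_1x_1^2+A_2x_2^2=m$ in a box, which saves $(\Delta)^{-1/3}$ from the discriminant of the binary form.

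\textbf{Main obstacle.} The key step is the lattice construction in Step~2. We must show rigorously that a primitive solution forces $\x$ into $O(B^\ve)$ lattices of determinant $\gg \Delta_Q^{1/3}/T^{1/2}$ (in the appropriate normalised sense). Then we need a uniform count of rational points on the quadric restricted to such a lattice, with an error term that does not reintroduce $\|Q\|$. We would try first: for each prime $p\mid A_i$, $p\nmid A_j$ ($j\ne i$), the equation modulo $p$ forces $-A_jx_j^2\equiv\ldots$, i.e.\ a quadric in three variables modulo $p$, which cuts out a lattice of index $p$ in the relevant coordinates. Primes dividing two or more $A_i$ are exactly where $T$ enters and where we lose. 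We then feed this lattice into a Heath-Brown determinant-method bound for quadrics restricted to lattices. The exponent $1/3$ reflects the 3-dimensional lattice reduction, averaging the three ways to choose which coordinates to project. Lines are excluded since they would otherwise contribute $B^2$ without the saving.
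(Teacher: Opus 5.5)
Your proposal has a genuine gap: none of the three terms is actually derived, and the mechanism you single out as the key step does not work. Take an odd prime $p$ dividing exactly one coefficient, say $A_1$. Then $Q(\x)\equiv 0 \pmod p$ only forces $(x_2,x_3,x_4)$ to reduce to a zero of the nonsingular ternary form $A_2x_2^2+A_3x_3^2+A_4x_4^2$ over $\mathbb{F}_p$. That zero set is a cone made of $p+1$ lines through the origin. A plane modulo $p$ (that is, a lattice of index $p$) contains at most two of these lines. So covering the solutions by proper sublattices needs $\gg p$ lattices, not $O(B^{\ve})$ lattices of determinant $\gg \Delta_Q^{1/3}/T^{1/2}$. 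Coverings by $O(B^{\ve})$ lattices of large determinant exist for \emph{ternary} forms, whose reduction modulo a prime dividing the determinant has rank at most two. This is why one must pass to ternary forms first.

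The other steps are not arguments either. The identity $N(Q,B)=\sum_m r_{12}(m)r_{34}(-m)$ gives nothing beyond $B^{2+\ve}$ unless you control how often the two binary forms represent the same $m$, and you do not address this. Also, for fixed $m$ there is nothing left to save in $A_1x_1^2+A_2x_2^2=m$, which already has $O(B^{\ve})$ solutions.

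The claim that the points lie on $O(B^{2/3}\Delta_Q^{-1/3+\ve})$ conics, each carrying $O(B^{2/3+\ve})$ points, is unsupported and false. Take $Q=x_1^2+x_2^2-2x_3^2+A_4x_4^2$ with $|A_4|$ huge and $-2A_4\neq\square$. This quadric has no rational lines, yet the section $x_4=0$ alone carries $\asymp B$ points. Any covering by $o(B)$ conics or lines must contain this section, since any other conic or line meets it in at most four points.

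What is missing is the slicing by planes. By Siegel's lemma every $\x$ counted lies on a plane $\c\cdot\x=0$ with $|\c|\ll B^{1/3}$. There are $O(B^{4/3})$ such planes, each contributing $O(1)$ beyond its main term, and this is the source of the $B^{4/3}$ term; the planes with two vanishing $c_i$ contribute only $O(B^{4/3+\ve})$ in total.

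On each plane, $Q$ restricts to a ternary form $q_\c$ with $\det M_\c=c_4^4Q^*(\c)$, where $Q^*$ is the dual form. Its primitive zeros lie in $O(B^{\ve})$ lattices of determinant $\gg |c_4Q^*(\c)|/(g_\c T d'_\c)^{3/2}$ and in $O(B^{\ve})$ ellipsoids of controlled volume. Hence $N^0(q_\c,B)\ll B^{\ve}(1+B(g_\c T d'_\c)^{1/2}|c_4Q^*(\c)|^{-1/3})$.

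The saving $\Delta_Q^{-1/3}$ appears only after averaging over $\c$. A H\"older-type argument bounds the number of $\c$ with $|\c|\sim C$ and $|Q^*(\c)|\sim R$ by roughly $C^3R^{1/2}\|Q\|^{1/2}\Delta_Q^{-1/2}$, plus a secondary term. That secondary term, together with the planes having one vanishing $c_i$, produces the $B^{5/3}$ term. The divisibility $d'_\c\mid c_1c_2c_3$ and its analogue for $g_\c$ absorb the gcd losses. Finally, $R\ll C^2\Delta_Q/m_Q$ turns $R^{1/6}\Delta_Q^{-1/2}$ into $\Delta_Q^{-1/3}$. The range $\|Q\|^3\geq \Delta_QB^2$ is handled separately by Comtat's $O(B^2)$ bound.
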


When $A_1\cdots A_4\neq \square$, there are no rational lines on the quadric surface $Q=0$, so that $N(Q,B)=N^0(Q,B)$ in this case. A slightly sharper (but more technical) upper bound is proved in Theorem~\ref{prop:main}, which is  a key ingredient in our proof of Theorem~\ref{thm:aplllinear}. 
When we come to apply it, the coefficients are all of order $B$ and $T$ is small, so that the term $B^{\frac43+\ve}$ dominates. This is the main bottleneck of our method.

Theorem~\ref{thm:main} is not the first upper bound of its type. 
It follows from work of Browning~\cite[Theorem 3]{BrowningDiagonal} that 
 \[
 N(Q,B)\ll_{\ve} B^{\frac 32+\ve}+\frac{B^{2+\ve}}{\Delta_Q^{\frac16}},
 \]
	under the additional assumption $\Delta_Q$ is square-free. 
Our bound applies to all nonsingular diagonal quadratic forms, without any square-freeness assumption on $\Delta_Q$. Moreover, when the discriminant is sufficiently large relative to the coefficients, the leading term is $B^{\frac 43+\varepsilon}$, which improves upon the $B^{\frac 32+\varepsilon}$ term appearing in Browning's estimate. 
Further work was done by Browning and Heath-Brown~\cite{BrowningHeathBrown}, who treated arbitrary nonsingular quadratic surfaces, with a focus on removing the factor $B^\varepsilon$ from the $B^{2+\ve}$-term. In a similar vein, there is also work of Comtat~\cite{comtat}, which in our setting proves
\begin{equation}\label{eq:comtat}
 N(Q,B)=\begin{cases} 
 O\left(B^2\right) & \text{ if $A_1\cdots A_4\neq \square$,}\\
 O\left(B^2\log B\right) & \text{ if $A_1\cdots A_4= \square$,}
\end{cases}
\end{equation}
for an absolute implied constant.
 Finally, 
in the affine setting, 
 we note that the case of diagonal quadratic surfaces was also studied by the authors~\cite[Corollary 1.3]{bv-quad} using a modified form of the determinant method. 

\subsection*{Outline of the paper}
	
	To prove Theorem~\ref{thm:main}, we will employ a similar strategy 
	to one adopted in~\cite{BrowningDiagonal,BrowningHeathBrown}, exploiting the fact that we are dealing with a diagonal quadratic form. 
	 A point of bounded height must lie on a plane of
	height $O(B^{\frac 13})$, by Siegel's lemma. We intersect $Q=0$ with all planes of height $O(B^{\frac 13})$ and count the
	points on each of these intersections. The intersection of $Q=0$ with a plane is a conic, and
	we bound the number of rational points on each of these conics. This is carried out in Sections~\ref{sec:quad}--\ref{sec:quad''}.
	
We prove Theorem~\ref{thm:appl} in Section~\ref{final}, as follows. To bound the number of points of bounded height on $U$, we use three different methods. First, we fix $\x=(x_1,x_2,x_3,x_4)$ and we count points on the fibre $Z_{\x}\subset\mathbb{P}^3$
	with equation \eqref{eq:eq}, which defines a surface of degree $b$. 
If $b=3$, we employ the determinant method. More specifically, we use a version of the method introduced by
Ellenberg and Venkatesh~\cite{ev} and refined by Walsh~\cite{walsh}, which becomes more powerful as the coefficients of the form defining the variety grow. We then sum over all possible choices of $\x$. Second, we can instead count points on the surface $Z_{\y}\subset\mathbb{P}^3$ of degree $a$ in which we consider $\y$ as fixed and then sum over all possible choices of $\y$. Third, we employ a Fourier-analytic argument inspired by the proof of Hua's inequality in the circle method, following the spirit of recent work on exceptional sets related to the $abc$ conjecture~\cite[Section 3]{abc}. We then combine these three methods to obtain the best possible bounds.

Finally, we prove Theorem~\ref{thm:aplllinear} in Section~\ref{sec:lin}. To bound the number of points of bounded height on $U'$, we use two different methods. First, we fix $\x=(x_1,x_2,x_3,x_4)$ and we count points on the fibre $Z_{\x}'\subset\mathbb{P}^2$, which is a line. Second, we fix $\y=(y_1,y_2,y_3)$ and we count points on the fibre $Z_{\y}'\subset\mathbb{P}^3$, which is a diagonal quadratic surface. To do so, we use Theorem~\ref{thm:main} (and its more technical variant Theorem \ref{prop:main}). We then combine these two methods to obtain the best possible bound.

\subsection*{Acknowledgments} While working on this paper, the second author was supported by the European Union’s Horizon 2020 Research and Innovation Programme under the Marie Skłodowska-Curie Grant Agreement No. 101034413 at ISTA and by the Research Foundation – Flanders (FWO) through grants G0B3123N, G0B1721N, and G0A2923N at KU Leuven.

\subsection*{Notation}
Let $\QQ^{\times m}$ denote the group of non-zero rational $m$th powers, for any integer $m\geq 2$.
Throughout the paper, given $A,B>0$, we write $A\sim B$ to mean $B\leq A< 2B$. Given a positive integer $x$, we denote by $x^{\square}$ the largest square dividing $x$.

	\section{Uniform bounds for quadrics: preliminaries}\label{sec:quad}

Let $Q(\x)=A_1x_1^2+A_2x_2^2+A_3x_3^2+A_4x_4^2$ for relatively prime nonzero 
$A_1,\dots,A_4\in \ZZ$.
	We write 
	\[
	\Delta_Q=\abs{A_1A_2A_3A_4}, \quad 
	\|Q\|=\max_{1\leq i\leq 4}\abs{A_i}, \quad 
	m_Q=\min_{1\leq i\leq 4}\abs{A_i},
	\]
and we recall the definition \eqref{eq:T} of $T$.	
	
	 Let $\c=(c_1,c_2,c_3,c_4)\in\ZZ_{\prim}^4$.
	Assume that $\abs{c_4}=\max_{1\leq i\leq 4}\abs{c_i}=\abs{\c}$. A point that satisfies $Q(\x)=\x\cdot\c=0$ must satisfy $q_{\c}(x_1,x_2,x_3)=0$, where
	\begin{align*}
		q_{\c}(x_1,x_2,x_3)=~&c_4^2\left(A_1x_1^2+A_2x_2^2+A_3x_3^2+A_4\frac{(c_1x_1+c_2x_2+c_3x_3)^2}{c_4^2}\right)\\=~&c_4^2A_1x_1^2+c_1^2A_4x_1^2+c_4^2A_2x_2^2+c_2^2A_4x_2^2+c_4^2A_3x_3^2+c_3^2A_4x_3^2\\
		&+2A_4c_1c_2x_1x_2+2A_4c_2c_3x_2x_3+2A_4c_1c_3x_1x_3.
	\end{align*}
	The associated symmetric matrix is
	\begin{equation}\label{eq:Mc}
		M_{\c}=\begin{pmatrix}
			c_4^2A_1+c_1^2A_4& A_4c_1c_2& A_4c_1c_3 \\
			A_4c_1c_2& c_4^2A_2+c_2^2A_4& A_4c_2c_3 \\
			A_4c_1c_3& A_4c_2c_3& c_4^2A_3+c_3^2A_4 \\
		\end{pmatrix}.
	\end{equation}
We have $\det M_{\c}=c_4^4Q^*(\c)$, where
	\begin{equation}\label{eq:Q*}
		Q^*(\c)=A_1A_2A_3c_4^2+A_1A_2A_4c_3^2+A_1A_3A_4c_2^2+A_2A_3A_4c_1^2
	\end{equation}
is the dual form to $Q$. 	

	The
	adjugate matrix
	 $D_{\c}$ of $M_{\c}$ is equal to 
	\begin{equation}\label{eq:defDc}
\begin{footnotesize}
		c_4^2\begin{pmatrix}
			A_2A_3c_4^2+A_2A_4c_3^2+A_3A_4c_2^2 	& -c_1c_2A_3A_4& -c_1c_3A_2A_4 \\
			-c_1c_2A_3A_4& 	A_1A_3c_4^2+A_3A_4c_1^2+A_1A_4c_3^2 &-c_2c_3A_1A_4 \\
			-	c_1c_3A_2A_4& -c_2c_3A_1A_4& A_1A_2c_4^2+A_2A_4c_1^2+A_1A_4c_2^2\\
		\end{pmatrix}.
\end{footnotesize}
	\end{equation}
We let $d_\c$ 
	be the greatest common divisor of the entries of $D_\c$. 
		Let $g_{\c}=\gcd(A_4,c_4^2)$ and $A_4'=A_4/g_{\c}$. Let
	\begin{equation}\label{eq:Tcc}
	T_{\c}=\gcd(A_1A_2,A_1A_3,A_1A_4',A_2A_3,A_2A_4',A_3A_4')
	\end{equation}
and let 
	\[
	D_\c'=\frac{D_\c}{T_{\c}c_4^2g_{\c}},
	\] 
noting that $D_\c'$ has integer entries. 
	
	\begin{lemma}\label{lem:holder}
		Let $R,X_1,\dots, X_n\geq 1$, let $B_1,\dots, B_n\in \ZZ_{\neq 0}$ and let $\ve>0$. Let
		\begin{equation}\label{eq:NRu}
		N_R(\uu{X})=\#\left\{\x\in (\ZZ_{\neq 0})^n:\abs{\sum_{i=1}^n B_ix_i^2}\leq R, \, \abs{x_i}\leq X_i\right\}
		\end{equation}
		and put $X=\max_{1\leq i\leq n}X_i$.
		For $n\geq 4$, we have
		\[
		N_R(\uu{X})\ll_{n,\ve} (RX)^\ve\left(\prod_{i=1}^n X_i\right)^{\frac{n-2}{n}}\left(\prod_{i=1}^n \left(1+\frac{R}{\abs{B_i}}\right)\right)^{\frac{1}{n}}.
		\]
		For $n=3$, we have
		\[
		N_R(\uu{X})\ll_\ve (RX)^\ve \prod_{i=1}^3
\left(
		\left(
1+\frac{R}{|B_i|}\right)^{\frac{1}{4}}
X_i^{\frac{1}{2}}\right).
				\]
	\end{lemma}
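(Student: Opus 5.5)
The plan is to fix all but two of the variables, bound the resulting binary problem uniformly, and then take a geometric mean over all choices of the pair. This geometric-mean structure is where the exponents $\frac{n-2}{n}$ and $\frac1n$ come from.

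\emph{Binary estimate.} For a fixed pair $i\neq j$ and fixed values of the other variables, write $k=-\sum_{l\neq i,j}B_lx_l^2$. The pair $(x_i,x_j)$ must then satisfy $|B_ix_i^2+B_jx_j^2-k|\le R$. I would split according to the integer $t=B_ix_i^2+B_jx_j^2$, which lies in an interval of length $2R$.
\begin{itemize}
\item For $t\neq 0$, the number of representations of $t$ by the binary form $B_iu^2+B_jv^2$ with $|u|,|v|\le X$ is $\ll (RX)^{\ve}$. This follows from the divisor bound in $\ZZ[\sqrt{-B_iB_j}]$, or, in the indefinite case, by counting ideals of norm $t$ together with units in a box, which costs only a logarithm.
\item The value $t=0$ contributes $O(1+\min(X_i,X_j))$ nonzero solutions at most, and only when $-B_iB_j$ is a square.
\end{itemize}
This gives roughly $R^{1+\ve}$ for the pair, but I want a count sensitive to $B_i,B_j$. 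So I would instead count lattice points $(u,v)$ in the region $\{|B_iu^2+B_jv^2-k|\le R\}$. In the definite case this is an annulus of area $\ll R/\sqrt{|B_iB_j|}$. In the indefinite case the region is a hyperbolic strip, and a dyadic decomposition along the hyperbola together with the representation bound should give the same area bound up to $X^{\ve}$. The target is
\[
P_{ij}\ll (RX)^{\ve}\Bigl(1+\frac{R}{|B_i|}\Bigr)^{\frac12}\Bigl(1+\frac{R}{|B_j|}\Bigr)^{\frac12},
\]
uniformly in $k$. The point is that $x_i$ then ranges over $\ll 1+\sqrt{R/|B_i|}$ values in each hyperbolic "block", with $(RX)^{\ve}$ blocks.

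\emph{Case $n\ge 4$.} Summing the binary estimate over the remaining $n-2$ variables gives
\[
N_R\ll (RX)^{\ve}\prod_{l\neq i,j}X_l\,\Bigl(1+\frac{R}{|B_i|}\Bigr)^{\frac12}\Bigl(1+\frac{R}{|B_j|}\Bigr)^{\frac12}
\]
for every one of the $\binom n2$ pairs. Taking the geometric mean over all pairs, each $X_l$ appears with exponent $\binom{n-1}{2}/\binom n2=\frac{n-2}{n}$. Each factor $1+R/|B_i|$ appears with exponent $\frac12(n-1)/\binom n2=\frac1n$. This is exactly the claimed bound.

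\emph{Case $n=3$.} For $n=3$ the same averaging would give exponents $\frac13$, which is not what is claimed, so I would combine two bounds instead.
\begin{itemize}
\item Fixing $x_k$ and solving for $(x_i,x_j)$ gives $N_R\ll (RX)^{\ve}X_k(1+R/|B_i|)^{1/2}(1+R/|B_j|)^{1/2}$.
\item Fixing $x_j$ and $x_k$, the variable $x_i^2$ lies in an interval of length $2R/|B_i|$. Counting $x_i$ trivially then gives $N_R\ll X_jX_k\min\bigl(X_i,\,1+R/|B_i|\bigr)$, and hence, after a geometric mean of these two options, $N_R\ll X_jX_k\,X_i^{1/2}(1+R/|B_i|)^{1/2}$.
\end{itemize}
Multiplying suitable powers of these, and averaging over permutations of $\{1,2,3\}$, should yield $\prod_i X_i^{1/2}(1+R/|B_i|)^{1/4}$. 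The exponent bookkeeping is a small linear system that I would solve explicitly.

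The main obstacle is the uniform binary estimate for indefinite forms: the region is unbounded and only the box $|x_i|\le X_i$ cuts it off. I would handle it by parametrising the solutions of $B_iu^2+B_jv^2=t$ through ideals in the quadratic order and units, so that the number of units of size $\le X$ is $\ll\log X$. The resulting factor is absorbed into $(RX)^{\ve}$.
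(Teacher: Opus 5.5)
\textbf{There is a genuine gap.} Your whole reduction rests on the two-variable estimate
\[
P_{ij}\ll (RX)^{\ve}\Bigl(1+\frac{R}{|B_i|}\Bigr)^{\frac12}\Bigl(1+\frac{R}{|B_j|}\Bigr)^{\frac12},
\]
uniformly in $k$, and as stated this is false. You noticed that $t=0$ can contribute about $\min(X_i,X_j)$ solutions when $-B_iB_j$ is a square, but you then dropped that term from the target.

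Here is a concrete failure. Take $n=4$, $B=(1,-1,1,-1)$, $R=1$, $X_1=X_2=X$ and $X_3=X_4=1$. Every choice $x_3,x_4\in\{\pm1\}$ gives $k=0$, and then $x_1=\pm x_2$ gives $N_R\gg X$. Your pairwise inequality for $(i,j)=(1,2)$ instead gives $N_R\ll X^{\ve}X_3X_4\cdot 2=O(X^{\ve})$. The example $n=3$, $B=(1,-1,1)$, $X_3=1$ (so $k=-1$) breaks your bound obtained by fixing $x_3$ in the same way.

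The lemma itself survives these examples, since its right-hand side is of order $X^{1+\ve}$. What fails is the intermediate inequality, and you use it for every pair in the geometric mean. A repair would have to control how often the remaining variables force $|k|\le R$, which is another counting problem of the same type, and then show that the degenerate contribution fits the target.

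Even away from the degenerate case, the coefficient-sensitive binary bound is only asserted. Counting ideals and units bounds the representations of each single $t$ by $(RX)^{\ve}$. As you note, that gives $R^{1+\ve}$ for the pair, with no saving in $B_i,B_j$. The area $R/\sqrt{|B_iB_j|}$ of the annulus or hyperbolic strip is not an upper bound for its lattice points. The region is thin, and each row $x_j=v$ can contain a lattice point even when that row's share of the area is far below $1$. A dyadic decomposition along the hyperbola does not supply the missing saving either. Getting from $R^{1+\ve}$ down to $\sqrt{(1+R/|B_i|)(1+R/|B_j|)}$ is the heart of the matter, and the proposal gives no argument for it.

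The paper never has to solve a problem with two different coefficients. It sorts each $x_i$ according to which interval of length $R$ contains $B_ix_i^2$. It then writes $N_R$, up to $O_n(1)$ shifts, as $\int_0^1\prod_i S_i(\alpha)\,\mathrm{d}\alpha$ and applies H\"older, so each factor involves a single coefficient. The fourth moment of $S_i$ counts solutions of $x_1^2-x_2^2+x_3^2-x_4^2=N$ with $|N|\le 2R/|B_i|$. The bound $X_i^{2+\ve}$ for each $N$, including $N=0$, absorbs the diagonal (degenerate) solutions automatically. For $n=3$ the paper interpolates between the second and fourth moments by Cauchy--Schwarz.

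Your exponent bookkeeping is correct conditionally. For $n=3$, the product of your three fixed-$x_k$ bounds raised to the power $\frac14$, times $(X_1X_2X_3)^{\frac14}$, gives exactly the target. What is missing is a valid input.
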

	\begin{proof}
		Let 
		\[
		M_i(m)=\left\{x_i\in \ZZ_{\neq 0}: \abs{x_i}\leq X_i, (m-1)R\leq B_ix_i^2\leq mR\right\}
		\]
		and $a_{i,m}=\#M_i(m)$. Notice that if $-R\leq \sum_{i=1}^n B_ix_i^2\leq R$ and $x_i\in M_i(m_i)$ for certain $m_i$, then
		\[
		-R+\sum_{i=1}^n (m_i-1)R\leq 0\leq R+\sum_{i=1}^n m_iR
		\] 
		and thus
		\[
		-1\leq \sum_{i=1}^n m_i\leq n+1.
		\]
		It follows that
		\begin{align*}
		N_R(\uu{X})
		&\leq \sum_{-1\leq \sum_i m_i\leq n+1}\prod_{i=1}^n a_{i,m_i}\\ 
		&= \sum_{-1\leq k\ll_n 1}\int_0^1 \sum_{m_i\in \ZZ} \left( \prod_{i=1}^n a_{i,m_i}\right)e\left(\alpha\left(\sum_i m_i-k\right) \right) \d\alpha,
		\end{align*}
		where we denote $\exp(2\pi i x)$ by $e(x)$. Notice that $a_{i,m}=0$ for $\abs{m}> \abs{B_iX_i^2/R}+1$ and so the sum over the $m_i$ has finitely many nonzero terms.
		Let
		\[
		S_i(\alpha)=\sum_{m\in \ZZ} a_{i,m}e(\alpha m ),
		\]
		so that
		\[
		N_R(\uu{X})\ll\sum_{-1\leq k\ll_n 1}\int_0^1 e(-\alpha k)\prod_{i=1}^n S_i(\alpha) \d\alpha.
		\]
		H\"older's inequality yields
		\[
		N_R(\uu{X})\ll_n \int_0^1 \prod_{i=1}^n \abs{S_i(\alpha)} \d\alpha\leq \left(\prod_{i=1}^n\int_0^1 \abs{S_i(\alpha)}^n \d\alpha\right)^{\frac 1n}.
		\]
		
		Assume first that $n\geq 4$ and notice that $\abs{S_i(\alpha)}\leq \sum_{m\in \ZZ} a_{i,m}\ll X_i$. Thus
		\begin{equation}\label{eq:NRX}
			N_R(\uu{X})\ll_n \left(\prod_{i=1}^n X_i^{n-4}\int_0^1 \abs{S_i(\alpha)}^4 \d\alpha\right)^{\frac 1n}.
		\end{equation}
		We have
		\[
		\int_0^1 \abs{S_i(\alpha)}^4 \d\alpha=\sum_{m_1+m_2+m_3+m_4=0} a_{i,m_1}a_{i,-m_2}a_{i,m_3}a_{i,-m_4} \leq \#M_i,
		\]
where
		\[
		M_i=\left\{\x\in \ZZ_{\neq 0}^4: \abs{x_j}\leq X_i \text{ for }1\leq j\leq 4, \abs{B_i\left(x_1^2-x_2^2+x_3^2-x_4^2\right)}\leq 2R\right\}.
		\]
				Letting
		\[
		M_i(N)=\left\{(x_1,x_2,x_3,x_4)\in \ZZ_{\neq 0}^4: \abs{x_j}\leq X_i \text{ for }1\leq j\leq 4, ~x_1^2-x_2^2+x_3^2-x_4^2=N\right\},
		\]
we may write
		\[
		M_i=\bigcup_{\abs{N}\leq \left\lfloor \frac{2R}{\abs{B_i}} \right\rfloor}M_i(N).
		\]
		We have 
$\#M_i(N)\ll_{\ve} (1+\abs{N})^{\ve}X_i^{2+\ve}$
		for all $N$, whence
		\begin{equation}\label{eq:toad}
				\int_0^1 \abs{S_i(\alpha)}^4 \d\alpha\leq 
		\#M_i\ll_\ve X_i^{2+\ve}\left(1+\frac{R}{\abs{B_i}}\right)^{1+\ve}.
		\end{equation}
On appealing to \eqref{eq:NRX}, we conclude that 
		\[
		N_R(\uu{X})\ll_{n,\ve}\left(\prod_{i=1}^n X_i^{n-4}\cdot X_i^{2+\ve}\left(1+\frac{R}{\abs{B_i}}\right)^{1+\ve}\right)^{\frac 1n},
		\]
		which thereby completes the case $n\geq 4$. 

Suppose next that $n=3$. 
Then 
		\[
		\int_0^1 \abs{S_i(\alpha)}^2 \d\alpha\leq \#M_i',
		\]
where
		\[
		M_i'=\left\{(x_1,x_2)\in \ZZ_{\neq 0}^2: \abs{x_j}\leq X_i \text{ for }1\leq j\leq 2, \abs{B_i\left(x_1^2-x_2^2\right)}\leq R\right\}.
		\]
    Notice that $\#M_i'\ll X_i^2$. Moreover, on
		letting
		\[
		M_i'(N)=\left\{(x_1,x_2)\in \ZZ_{\neq 0}^2: \abs{x_j}\leq X_i \text{ for }1\leq j\leq 2, ~x_1^2-x_2^2=N\right\},
		\]	
		for any $N\in \ZZ$, 
		we see that $\#M_i'(0)\ll X_i$ and 
		$
		\#M_i'(N) \ll_\ve |N|^\ve$ if $N\neq 0$, by the divisor bound.
Hence
 it follows that
		\begin{align*}
			\#M_i'
			&\ll_\ve X_i
			+ R^\ve \min\left\{\frac{R}{\abs{B_i}}, 
		X_i^2\right\},
					\end{align*}
		Finally, on appealing to Cauchy--Schwarz and combining the previous estimate with \eqref{eq:toad}, we deduce that 
		\begin{align*}
			\int_0^1 \abs{S_i(\alpha)}^3 \d\alpha
			&\leq \left(\int_0^1 \abs{S_i(\alpha)}^2 \d\alpha\right)^{\frac 12}\left(\int_0^1 \abs{S_i(\alpha)}^4 \d\alpha\right)^{\frac 12} 
			\\
			&\ll_{\ve} 
			X_i^{1+\ve}\left(1+\frac{R}{\abs{B_i}}\right)^{\frac{1}{2}+\ve} \cdot 
			\left(X_i
			+ R^\ve \min\left\{\frac{R}{\abs{B_i}}, 
		X_i^2\right\}\right)^\frac{1}{2}\\
			&\ll_{\ve} 
			\left(1+\frac{R}{\abs{B_i}}\right)^{\frac{1}{2}+\ve}X_i^{\frac{3}{2}+\ve} +
						 (X_iR)^\ve \left(
			X_i^2\left(1+\frac{R}{\abs{B_i}}\right) 
						\min\left\{\frac{R}{\abs{B_i}}, 
		X_i^2\right\}\right)^\frac{1}{2}.
		\end{align*}
		If $R/|B_i|\ll 1$ then the overall contribution is $O_\ve (R^\ve X_i^{\frac{3}{2}+\ve})$.
		Thus we may suppose that $R/|B_i|\gg 1$. Taking 
\[		\min\left\{\frac{R}{\abs{B_i}}, 
		X_i^2\right\}\leq 
		\left(\frac{R}{\abs{B_i}}\right)^{\frac{1}{2}} X_i,
\]		
we conclude that 
		\begin{align*}
			\int_0^1 \abs{S_i(\alpha)}^3 \d\alpha
&\ll_{\ve} (RX_i)^\ve \left(
1+\frac{R}{|B_i|}\right)^{\frac{3}{4}}
X_i^{\frac{3}{2}} ,\end{align*}	
from which the statement of the lemma follows when $n=3$.
	\end{proof}

	\begin{corollary}\label{lem:NRX}
		Let $R,X_1,\dots, X_n\geq 1$, let $B_1,\dots, B_n \in \ZZ_{\neq 0}$ and let $\ve>0.$ 
Let $N_R(\uu{X})$ be given by \eqref{eq:NRu} and let $j\in \{1,\dots,n\}$ be such that $\abs{B_j}=\min_{1\leq i\leq n}\{\abs{B_i}\}$.
For $n\geq 4$, we have
		\[
		N_R(\uu{X})\ll_{n,\ve} \left(\prod_{i=1}^n X_i\right)\left(\frac{R}{X_j^2\abs{B_j}}\right)^{\frac 12}+R^{\ve}\left(\prod_{i=1}^n X_i\right)^{\frac{n-2}{n}+\ve}.
		\]
		For $n=3$, we have
		\[
		N_R(\uu{X})\ll_{\ve} \left(\prod_{i=1}^3 X_i\right)\left(\frac{R}{X_j^2\abs{B_j}}\right)^{\frac 12}+R^{\ve}\left(\prod_{i=1}^3 X_i\right)^{\frac{1}{2}+\ve}.
		\]
	\end{corollary}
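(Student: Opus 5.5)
The plan is to derive the corollary from Lemma~\ref{lem:holder} by comparing each factor $1+R/\abs{B_i}$ with $X_i^2$. Fix the index $j$ with $\abs{B_j}$ minimal and split into two cases according to the size of $R/\abs{B_j}$ relative to $X_j^2$.

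\emph{First case: $R/(X_j^2\abs{B_j})\geq 1$.} Here we do not need the lemma at all. The trivial bound $N_R(\uu{X})\leq \prod_i(2X_i)$ already lies below the first term $\left(\prod_i X_i\right)\left(R/(X_j^2\abs{B_j})\right)^{1/2}$, up to a constant depending on $n$.

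\emph{Second case: $R\leq X_j^2\abs{B_j}$.} Since $\abs{B_i}\geq\abs{B_j}$ for every $i$, we get $R/\abs{B_i}\leq R/\abs{B_j}\leq X_j^2$, so each factor satisfies $1+R/\abs{B_i}\ll X_j^2$. Feeding this into Lemma~\ref{lem:holder} is not enough, however: it would give $(\prod X_i)^{(n-2)/n}X_j^{2}$, which is too large when the $X_i$ differ. We must therefore avoid the crude bound and treat the variable $x_j$ separately.

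The cleaner route is to fix all $x_i$ with $i\neq j$. The condition $\abs{\sum B_ix_i^2}\leq R$ then confines $B_jx_j^2$ to an interval of length $2R$. For $\abs{x_j}$ in a dyadic range $x_j\sim Y$, this interval contains $O(1+R/(\abs{B_j}Y))$ admissible values of $x_j$, since consecutive squares near $Y^2$ are spaced about $Y$ apart. This is one-variable fibering, and it alone does not produce the $\frac{n-2}{n}$ exponent. So the plan is the following hybrid:
\begin{enumerate}
\item apply Lemma~\ref{lem:holder} directly, using $1+R/\abs{B_i}\leq 2\max(1,R/\abs{B_i})$;
\item expand the product $\prod_i(1+R/\abs{B_i})^{1/n}$ into a sum over subsets $S\subseteq\{1,\dots,n\}$;
\item for each $S$, use $R/\abs{B_i}\leq R/\abs{B_j}$ for all $i\in S$.
\end{enumerate}
The $S=\emptyset$ term is exactly the second term of the corollary. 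For $S\neq\emptyset$, the aim is to show that the contribution is dominated either by the trivial bound in the regime where $R/\abs{B_j}$ is large, or by the first term.

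The main obstacle is this interpolation. The lemma's bound has the shape $(\prod X_i)^{(n-2)/n}\,\rho^{\abs{S}/n}$ with $\rho=R/\abs{B_j}$, and we want to compare it with $\min\bigl(\prod X_i,\ (\prod X_i)\rho^{1/2}/X_j\bigr)$. This comparison is not automatic when the $X_i$ are very unequal. To handle that, I would exploit the freedom of also fibering over variables with tiny $X_i$: sort the $X_i$, and when some $X_i$ are small relative to $\rho^{1/2}$, fix those variables trivially and apply the lemma to the remaining sum with $R$ replaced by $R+O(\sum_{\text{fixed}}\abs{B_i}X_i^2)$. Whether this recursion closes cleanly is the point I am least sure of. Failing that, I would fall back on the one-variable fibering over $x_j$ alone, which yields $\prod_{i\neq j}X_i\cdot(1+\rho^{1/2})$, and combine it with the lemma by taking a minimum.

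The $n=3$ case is run identically, using the $n=3$ form of the lemma, whose exponent is $\tfrac12$.
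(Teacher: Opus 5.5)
\textbf{There is a gap.} You never close the second case. The obstacle you describe there, namely interpolating between $(\prod_i X_i)^{(n-2)/n}\rho^{\abs{S}/n}$ and $\min\bigl(\prod_i X_i,\,(\prod_i X_i)\rho^{1/2}/X_j\bigr)$ when the $X_i$ are unequal, is not a real difficulty. It appears only because you split at $\rho=R/\abs{B_j}$ versus $X_j^2$. The split that closes the argument is $\rho$ versus $1$.

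\textbf{The case $R<\abs{B_j}$.} Because $\abs{B_j}$ is minimal, every factor satisfies $1+R/\abs{B_i}<2$. Lemma~\ref{lem:holder} then gives $(RX)^{\ve}(\prod_i X_i)^{(n-2)/n}$ directly, with exponent $\frac12$ in place of $(n-2)/n$ when $n=3$. Since all $X_i\geq 1$, we have $X^{\ve}\leq(\prod_i X_i)^{\ve}$, so this is the second term. No subset expansion is needed: when $\rho<1$, every term with $S\neq\emptyset$ is already at most the $S=\emptyset$ term.

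\textbf{The case $R\geq\abs{B_j}$.} Here the one-variable fibering you set aside does all the work.
\begin{itemize}
\item Fix the $x_i$ with $i\neq j$, in $O_n(\prod_{i\neq j}X_i)$ ways.
\item This forces $x_j^2$ into an interval of length $2R/\abs{B_j}$, so $\abs{x_j}$ lies in an interval of length $O\bigl(\sqrt{R/\abs{B_j}}\bigr)$.
\item That gives $O\bigl(1+\sqrt{R/\abs{B_j}}\bigr)=O\bigl(\sqrt{R/\abs{B_j}}\bigr)$ choices for $x_j$, because $\rho\geq 1$.
\end{itemize}
The total is $\prod_{i\neq j}X_i\,(R/\abs{B_j})^{1/2}$, which is exactly $(\prod_i X_i)\bigl(R/(X_j^2\abs{B_j})\bigr)^{1/2}$. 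The fibering never has to produce the $(n-2)/n$ exponent, since that term is only needed when $\rho<1$. Your first case ($\rho\geq X_j^2$, handled by the trivial bound) is correct but is contained in this one.

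\textbf{On your two routes.} The fallback you mention, fibering over $x_j$ and combining with the lemma by taking a minimum, is in fact the paper's whole proof. As written, though, you offer it as a fallback without checking that it suffices. Your main route, expanding over subsets and recursively fixing variables with small $X_i$ before reapplying the lemma with fewer variables and an enlarged $R$, is left unfinished. It is also unnecessary. Moreover, it would change the exponent $(n-2)/n$ and could leave fewer than three variables, a case Lemma~\ref{lem:holder} does not cover.
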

	\begin{proof}
		Assume without loss of generality that $j=1$. 
		If 
 $R<\abs{B_1}$, the desired bound follows immediately from Lemma~\ref{lem:holder}.
				Assume that $ R\geq \abs{B_1}$. We have $O_n(\prod_{i=2}^n X_i)$ choices for $x_2,x_3,\dots, x_n$. Once fixed, it follows that $B_1x_1^2$ must lie in an interval of length $R$, and so $\abs{x_1}$ lies in an interval of length $O(\sqrt{R/\abs{B_1}})$. Thus, we have $O(\sqrt{R/\abs{B_1}})$ choices for $x_1$, leading to a satisfactory overall estimate.
	\end{proof}
	
	For certain ranges of $R,X_i,B_i$, the bound given in Lemma~\ref{lem:holder} is significantly better than the one in Corollary~\ref{lem:NRX}, but for our purposes it is more convenient to use the latter. 

	\section{Uniform bounds for quadrics: the main estimates}\label{sec:quad'}
	
For $\c\in \ZZ_{\prim}^4$, the main goal of this section is to estimate the cardinality of rational points of bounded height that lie on the intersection of $Q(\x)=0$ with a plane $\c\cdot \x=0$, before summing this contribution over vectors 
$\c\in \ZZ_{\prim}^4$	 such that $|\c|$ and $|Q^*(\c)|$ are constrained to lie in dyadic intervals. 

		\subsection{No vanishing coordinates}\label{sec:no}
We begin by focussing on the case $c_1\dots c_4\neq 0$.
		Recall the notation for $g_\c$ that was introduced at the start of Section~\ref{sec:quad}.
Let $d_\c'$ 
be the greatest common divisor of the entries of $D_\c'$.
		Let $C,g,R,h\in \ZZ_{\geq 1}$. We study the set
	\begin{equation}\label{eq:defM1}
		M_1(C,g,R,h)=\left\{\c\in \ZZ_{\prim}^4:
		\begin{array}{l}
		\abs{\c}=\abs{c_4}\sim C,~g_{\c}=g, ~d_\c'=h, \\
		\abs{Q^*(\c)}\sim R,~c_1\cdots c_4\neq 0
		\end{array}
		\right\},
	\end{equation}
with the aim of computing a uniform upper bound for its cardinality. We will address this in Lemma~\ref{lem:cardM}. For now, we prove a preliminary lemma.
	
	\begin{lemma}\label{lem:cimidhi}
		If $\c\in M_1(C,g,R,h)$, then there exist $h_1,h_2,h_3$ such that $h=h_1h_2h_3$ and $h_i\mid c_i$ for $i=1,2,3$. Moreover, we have $(g/\sqrt{g^{\square}})\mid c_4$.
	\end{lemma}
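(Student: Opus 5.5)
The proof splits into the two assertions; the second is elementary and the first reduces to a prime-by-prime valuation argument.

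\emph{Divisibility of $c_4$.} By definition $g=\gcd(A_4,c_4^2)$ divides $c_4^2$. Write $e=v_p(g)$ for a prime $p$. Then $v_p(g^{\square})=2\lfloor e/2\rfloor$, so
\[
v_p\bigl(g/\sqrt{g^{\square}}\bigr)=e-\lfloor e/2\rfloor=\lceil e/2\rceil .
\]
Since $2v_p(c_4)\geq e$, we get $v_p(c_4)\geq\lceil e/2\rceil$. Doing this for every prime gives $(g/\sqrt{g^{\square}})\mid c_4$.

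\emph{Reduction of the first assertion.} Finding $h_1,h_2,h_3$ with $h=h_1h_2h_3$ and $h_i\mid c_i$ is equivalent to $h\mid c_1c_2c_3$. Indeed, given this divisibility one distributes, prime by prime, the exponent $v_p(h)$ among the $h_i$ greedily, never exceeding $v_p(c_i)$. So it suffices to show, for each prime $p$ with $e=v_p(h)\geq 1$, that
\[
v_p(c_1)+v_p(c_2)+v_p(c_3)\geq e .
\]

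\emph{Entries of $D_\c'$.} Using $A_4=gA_4'$, the entries of $D_\c'=D_\c/(T_\c c_4^2 g)$ are as follows.
\begin{itemize}
\item Off-diagonal: $-c_ic_jA_kA_4'/T_\c$, where $\{i,j,k\}=\{1,2,3\}$.
\item First diagonal entry: $\bigl(A_2A_3(c_4^2/g)+A_2A_4'c_3^2+A_3A_4'c_2^2\bigr)/T_\c$; the other two are analogous.
\end{itemize}
Each entry is an integer divisible by $h$.

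\emph{Valuations.} Put $a_i=v_p(A_i)$ for $i\leq 3$ and $a_4=v_p(A_4')$. Since $\gcd(A_1,\dots,A_4)=1$ and $A_4'\mid A_4$, at least one $a_i$ vanishes. Hence $t:=v_p(T_\c)$ equals the second smallest of $a_1,\dots,a_4$. The off-diagonal conditions then read
\[
v_p(c_i)+v_p(c_j)+a_k+a_4-t\geq e .
\]

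\emph{Case analysis.} I split according to the indices realising the two smallest $a$'s.
\begin{itemize}
\item If $a_4$ and some $a_k$ ($k\leq 3$) are the two smallest, then $a_k+a_4=t$. The off-diagonal entry involving $A_kA_4'$ gives $v_p(c_i)+v_p(c_j)\geq e$ directly.
\item If the two smallest are $a_i,a_j$ with $i,j\leq 3$, the off-diagonal entries only give $v_p(c_k)+v_p(c_\ell)\geq e-(a_\cdot+a_4-t)$, which is weaker. I then use the diagonal entries. In the $k$-th diagonal entry, the term $A_iA_j c_4^2/g$ has valuation $t+v_p(c_4^2/g)$, while the other two terms carry a factor $A_4'$ with $a_4>0$ in this case (or $\geq$ the second minimum).
\end{itemize}

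\emph{Main obstacle.} The delicate point is the interplay with $c_4$ in the second case. I will use that $g$ absorbs exactly the common $p$-part of $A_4$ and $c_4^2$, so that either $p\nmid A_4'$ or $p\nmid c_4^2/g$. I will also use primitivity of $\c$: if $p\mid c_1,c_2,c_3$ then $p\nmid c_4$. The aim is to show that the $p$-adic cancellation needed for $p^e$ to divide all three diagonal entries forces $e\leq v_p(c_1)+v_p(c_2)+v_p(c_3)$. Concretely, I will try to show that when $p\nmid A_4'$ the diagonal entries force $v_p(c_4^2/g)$ to be large unless $v_p(c_k)$ is large. Since $p\nmid c_4^2/g$ in that situation, this yields the required bound. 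This subcase is where I expect most of the work, and I would first check it via the identity $D_\c M_\c=\det(M_\c)I$ to control valuations uniformly.
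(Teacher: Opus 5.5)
Your proof that $(g/\sqrt{g^{\square}})\mid c_4$ matches the paper. So do the reduction to $h\mid c_1c_2c_3$, the formulas for the entries of $D_\c'$, and your first case, where some $k\le 3$ has $a_k+a_4=t$ and you use the off-diagonal entry $-c_ic_jA_kA_4'/T_\c$. The gap is your second case, which you leave open. Your plan for it also rests on an inverted implication. You write ``when $p\nmid A_4'$ \dots\ since $p\nmid c_4^2/g$ in that situation''. But the dichotomy you stated correctly runs the other way: $p\mid A_4'$ forces $p\nmid c_4^2/g$, while $p\nmid A_4'$ says nothing about $c_4^2/g$. Moreover, $p\nmid A_4'$ cannot occur in the second case at all, as you yourself note that $a_4>0$ there.

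The missing observation is that the second case is impossible when $p\mid h$. You need no cancellation analysis, no primitivity of $\c$, and no adjugate identity.

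Suppose $a_k+a_4>t$ for all $k\le 3$. Then $a_4>0$, since otherwise $t=\min_{k\le 3}a_k=a_k+a_4$ for the minimising $k$. So $p\mid A_4'$, hence $v_p(g)=v_p(c_4^2)$, i.e. $p\nmid c_4^2/g$.

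Take $i,j\le 3$ with $a_i+a_j=t$ and let $k$ be the remaining index. Consider the $k$-th diagonal entry $\bigl(A_iA_j(c_4^2/g)+A_iA_4'c_j^2+A_jA_4'c_i^2\bigr)/T_\c$. Its first term has $p$-adic valuation exactly $a_i+a_j-t=0$. The other two terms have valuations at least $a_i+a_4-t>0$ and $a_j+a_4-t>0$. So this entry is a $p$-adic unit, contradicting $p\mid h$.

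Hence whenever $p\mid h$ you are in your first case, which completes the proof; this is exactly the claim established in the paper. You should also phrase the case split as ``whether some $k\le 3$ satisfies $a_k+a_4=t$'', rather than via ``the two smallest'' $a$'s, to avoid ambiguity under ties.
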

	
	\begin{proof}
	Write $g=g_{\c}$ and $A_4'=A_4/g$. 
			The second part of the lemma follows on noting that $g\mid c_4^2$ implies $(g/\sqrt{g^{\square}})\mid c_4$. Turning to the first part we begin by proving that 
	\begin{equation}\label{eq:ram}
		h\mid c_1c_2c_3,
	\end{equation}
for any 
	$\c\in M_1(C,g,R,h)$, where $h=d_{\c}'$.
This will suffice for the lemma, since we may then take
		\[
		h_1=\gcd(h,c_1),\qquad
		h_2=\gcd\left(\frac{h}{h_1},c_2\right),\qquad
		h_3=\frac{h}{h_1h_2}. 
	\]
To prove \eqref{eq:ram}, we fix a prime $p\mid h$ and put
	\[
		\rho=v_p(h),\qquad
		\alpha_i=v_p(A_i)\quad (1\leq i\leq 3),\qquad
		\alpha_4=v_p(A_4'),\qquad
		\tau=v_p(T_{\c}).
	\]
	By the definition of $T_{\c}$,
	\[
		\tau=\min_{1\leq r<s\leq 4}(\alpha_r+\alpha_s).
	\]
	We claim that there exists 
	$k\in\{1,2,3\}$ such that
$		\alpha_k+\alpha_4=\tau$.
	Suppose for a contradiction that 
$		\alpha_k+\alpha_4>\tau$
for $1\leq k\leq 3$. Then, in particular, it follows that $\alpha_4>0$.
Since $A_4'=A_4/g$ and
	$g=\gcd(A_4,c_4^2)$, the inequality $\alpha_4>0$ implies
that $v_p(g)=v_p(c_4^2)$.
	Moreover, there must exist distinct $i,j\in\{1,2,3\}$
	such that $\alpha_i+\alpha_j=\tau$.
If $k\in \{1,2,3\}$ is the remaining index, it follows from \eqref{eq:defDc} that 
	\[
		(D_{\c}')_{kk}
		=
		\frac{
			A_iA_jc_4^2+
			A_iA_4c_j^2+
			A_jA_4c_i^2
		}{gT_{\c}}.
	\]
	The first summand on the right has $p$-adic valuation
	$
		\alpha_i+\alpha_j+v_p(c_4^2)-v_p(g)-\tau=0$,
	whereas the other two have valuations
	\[
		\alpha_i+\alpha_4+2v_p(c_j)-\tau>0
	\]
	and
	\[
		\alpha_j+\alpha_4+2v_p(c_i)-\tau>0.
	\]
But then 
		$v_p\bigl((D_{\c}')_{kk}\bigr)=0$,
which 	contradicts the inequality $\rho>0$, since $h$ divides every entry of
	$D_{\c}'$. This proves the claim.

	Choose $k\in \{1,2,3\}$ 
	such that $\alpha_k+\alpha_4=\tau$ and let 
	$\{i,j,k\}=\{1,2,3\}$. The corresponding off-diagonal entry is
	\[
		(D_{\c}')_{ij}
		=-c_ic_j\frac{A_kA_4'}{T_{\c}}.
	\]
The right hand side has $p$-adic valuation $v_p(c_ic_j)+\alpha_k+\alpha_4-\tau=v_p(c_ic_j)$. 
Since $p^\rho$ divides every entry of $D_{\c}'$, we therefore obtain
$\rho\leq v_p(c_i)+v_p(c_j)$. 
	Thus $\rho\leq v_p(c_1c_2c_3)$ for every prime $p$, which thereby proves \eqref{eq:ram}.
\end{proof}
	
	We are now ready to record our estimate for the cardinality of the set $M_1(C,g,R,h)$ that was defined in~\eqref{eq:defM1}.
	
	\begin{lemma}\label{lem:cardM}
		Let $C,g,R,h\in \ZZ_{\geq 1}$. For all $\ve>0$, we have
		\[
		\#	M_1(C,g,R,h)\ll_\ve (RC)^{\ve}\left(\frac{C^3R^{\frac{1}{2}} \|Q\|^{\frac{1}{2}}}{g^{\frac{1}{2}} \Delta_Q^{\frac{1}{2}}h}+\frac{C^{2}(g^{\square})^{\frac{1}{4}}}{h^{\frac 12}g^{\frac 12}}\right).
		\]
	\end{lemma}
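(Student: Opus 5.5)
We bound $\#M_1(C,g,R,h)$ with the counting lemma for diagonal quadratic inequalities, Corollary~\ref{lem:NRX}. Note that $Q^*(\c)$ is itself a diagonal quadratic form in $\c$, with coefficients $A_2A_3A_4,A_1A_3A_4,A_1A_2A_4,A_1A_2A_3$.

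First apply Lemma~\ref{lem:cimidhi}. It gives a factorisation $h=h_1h_2h_3$ with $h_i\mid c_i$ for $i=1,2,3$, and shows that $c_4$ is divisible by $e=g/\sqrt{g^{\square}}$. The number of factorisations $h=h_1h_2h_3$ is $O_\ve(h^\ve)$, and $h\ll C^{O(1)}\|Q\|^{O(1)}$. To keep the losses in terms of $RC$ only, I will argue that $h$ divides $Q^*(\c)$ up to bounded factors. This should follow since $\det D_\c=(\det M_\c)^2=c_4^8Q^*(\c)^2$, so $h^3$ divides a power related to $R$. Otherwise I will simply accept a divisor-type loss.

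Fix $h_1,h_2,h_3$ and write $c_i=h_iu_i$ for $i\le 3$ and $c_4=eu_4$. Then $|u_i|\le 2C/h_i$, $|u_4|\le 2C/e$, and
\[
\Bigl|A_2A_3A_4h_1^2u_1^2+A_1A_3A_4h_2^2u_2^2+A_1A_2A_4h_3^2u_3^2+A_1A_2A_3e^2u_4^2\Bigr|\le 2R .
\]
This is exactly the setting of Corollary~\ref{lem:NRX} with $n=4$, $X_i=C/h_i$ for $i\le3$, $X_4=C/e$, and $B_i$ the coefficients above. We have
\[
\prod X_i=\frac{C^4}{he}=\frac{C^4\sqrt{g^{\square}}}{hg}.
\]

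The second term of the corollary is $R^\ve(\prod X_i)^{1/2+\ve}$. Its main part is $C^2(g^{\square})^{1/4}/(h^{1/2}g^{1/2})$, which matches the second term in the lemma.

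For the first term, $\prod X_i\,(R/(X_j^2|B_j|))^{1/2}$, note that $X_j^2|B_j|$ equals $C^2\Delta_Q/|A_j|$ when $j\le3$, and $C^2\Delta_Q/|A_4|$ when $j=4$, since $h_j$ cancels against $X_j$. This gives
\[
\frac{C^4\sqrt{g^{\square}}}{hg}\cdot\frac{R^{1/2}|A_j|^{1/2}}{C\Delta_Q^{1/2}}\le \frac{C^3R^{1/2}\|Q\|^{1/2}\sqrt{g^{\square}}}{hg\,\Delta_Q^{1/2}} .
\]
Since $\sqrt{g^{\square}}\le g^{1/2}$, this is at most $C^3R^{1/2}\|Q\|^{1/2}/(g^{1/2}h\Delta_Q^{1/2})$, the first term of the lemma.

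Here $j$ is the index minimising $|B_j|$, not $X_j^2|B_j|$. The corollary's proof actually works with any index $j$: fix all other variables and count the one remaining. So I may pick the index minimising $X_j^2|B_j|$ instead, and the bound above holds in either case.

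The main obstacle is bookkeeping with the $\ve$-losses. Corollary~\ref{lem:NRX} loses $(\prod X_i)^\ve\le C^{4\ve}$ and $R^\ve$, which are acceptable. The loss $h^\ve$ from the choices of $h_1,h_2,h_3$ must also be bounded by $(RC)^{O(\ve)}$. This requires justifying that $h$ is bounded polynomially in $RC$, and I expect to get this from $h\mid d_\c$ together with the relation between the entries of the adjugate and $Q^*(\c)$. A further check is that the condition $c_1\cdots c_4\neq0$ matches the requirement $u_i\neq0$ in the corollary, which it does.
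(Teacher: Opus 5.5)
Your proposal follows the paper's proof essentially step for step: Lemma~\ref{lem:cimidhi} extracts $h_1h_2h_3=h$ and $g/\sqrt{g^{\square}}\mid c_4$, and Corollary~\ref{lem:NRX} with $n=4$ is then applied to $Q^*$ in the reduced variables. Your own computation $X_i^2\abs{B_i}\asymp C^2\Delta_Q/\abs{A_i}$ holds for \emph{every} $i$, so the choice of $j$ is irrelevant and no modification of the corollary is needed. (Your claim that its proof works for arbitrary $j$ is in fact false, since the case $R<\abs{B_j}$ uses minimality of $\abs{B_j}$, but nothing depends on it.) The $h^{\ve}$ loss is simpler than you make it: since $h_i\mid c_i\neq 0$, Lemma~\ref{lem:cimidhi} gives $h\leq\abs{c_1c_2c_3}\leq 8C^3$ directly, which is how the paper gets $O_\ve(C^\ve)$ factorisations. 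Your adjugate route also works, but in the form $h^3\mid d_\c^3\mid\det D_\c=c_4^8Q^*(\c)^2\neq 0$, not ``$h$ divides $Q^*(\c)$ up to bounded factors''.
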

	
	\begin{proof}
		Given $h_1,h_2,h_3\in \ZZ_{\geq 1}$, let 
		$M_1(C,g,R,h_1,h_2,h_3)$ be the set of $\c\in M_1(C,g,R,h)$ such that $h_i\mid c_i$, for $1\leq i\leq 3$. Then it follows from 
		 Lemma~\ref{lem:cimidhi} that 
		\[
		\#M_1(C,g,R,h)\leq \sum_{h_1h_2h_3=h} \#M_1(C,g,R,h_1,h_2,h_3).
		\]
		Since there are $O_{\ve}(C^\ve)$ choices for $h_1,h_2,h_3$ such that $h_1h_2h_3=h$, we just need to bound the cardinality of $M_1(C,g,R,h_1,h_2,h_3)$. 
		Set $h_4=g/\sqrt{g^{\square}}$. 
		It follows from Lemma~\ref{lem:cimidhi} that $h_4\mid c_4$. Hence we can define integers $c_k'=c_k/h_k$, for $1\leq k\leq 4$.
		Recall that $Q^*$ is defined in~\eqref{eq:Q*}. Putting $B_i=A_1A_2A_3A_4h_i^2/A_i$, we see that 
		\[
		Q^*(\c)=B_1c_1'^2+B_2c_2'^2+B_3c_3'^2+B_4c_4'^2,
		\]
whence
		\[
		\#M_1(C,g,R,h_1,h_2,h_3)\leq \#\left\{{\c'}\in(\ZZ_{\neq 0})^4:\abs{\sum_{i=1}^4 B_ic_i'^2}\leq 2R, |c_i'|\leq \frac{2C}{h_i}\right\}.
		\]
		Let $j$ be such that $\abs{B_j}=\min_i\{\abs{B_i}\}$. Then it follows from 
Corollary~\ref{lem:NRX} that 
		\begin{align*}
			\#M_1(C,g,R,h_1,h_2,h_3)&\ll_\ve 
			\frac{C^{2+\ve}R^{\ve}(g^{\square})^{\frac{1}{4}}}{h^{\frac 12}g^{\frac 12}}
			+\frac{C^{4}(g^{\square})^{\frac{1}{2}}}{hg}\left(\frac{Rh_j^2}{|B_j|C^2}\right)^{\frac 12}\\
			&\ll_\ve 
			\frac{C^{2+\ve}R^{\ve}(g^{\square})^{\frac{1}{4}}}{h^{\frac 12}g^{\frac 12}}
			+\frac{C^3}{hg^{\frac 12}} \left(\frac{R\|Q\|}{\Delta_Q}\right)^{\frac 12},
		\end{align*}
		which is satisfactory.
	\end{proof}
	
	Our goal now is to prove an upper bound for the number of rational points of bounded height on the curve $q_\c$, as defined at the beginning of the section. We will eventually achieve this in Lemma~\ref{lem:countM1}, after some preliminary technical results.

	\begin{lemma}\label{lem:ellips}
		Let $\c=(c_1,c_2,c_3,c_4)\in\ZZ_{\prim}^4$ with $\abs{c_4}=\abs{\c}$ and $Q^*(\c)\neq 0$.
		There are ellipsoids $\{E_j\}_{1\leq j\leq J}$, with $J=O_{\ve}((\abs{\c}\|Q\|)^\ve)$, each centred at the origin and of volume
		\[
		O\left(	B^3\min\left(1,\frac{\abs{Q^*(\c)}\|Q\|^3}{c_4^2\Delta_Q^{\frac{3}{2}}}\right)\right),
		\]
		such that
		\[
		\{\x\in\RR^3:q_\c(\x)=0,\abs{\x}\leq B\}\subset \bigcup_{j=1}^J E_j.
		\]
	\end{lemma}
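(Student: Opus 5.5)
The plan is to reduce everything to a real diagonalisation of the ternary form $q_\c$ and then cut its real zero locus in the box $\abs{\x}\leq B$ dyadically. Since $M_\c$ is real symmetric, there is an orthogonal matrix $O$ such that, writing $\x=O\y$, we have $q_\c(\x)=\lambda_1y_1^2+\lambda_2y_2^2+\lambda_3y_3^2$ with $\abs{\y}\ll B$. Order the eigenvalues so that $\abs{\lambda_1}\leq\abs{\lambda_2}\leq\abs{\lambda_3}$. By the determinant formula recorded after \eqref{eq:Q*}, $\abs{\lambda_1\lambda_2\lambda_3}=c_4^4\abs{Q^*(\c)}\neq0$. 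Orthogonal changes of variables preserve both volumes and ellipsoids centred at the origin, so it suffices to work in the $\y$-coordinates. The trivial ellipsoid (the ball of radius $\sqrt3 B$) already gives the bound $B^3$, which handles the ``$1$'' in the minimum.

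The first step is to control the eigenvalues in terms of $A_1,\dots,A_4$ and $c_4$. By \eqref{eq:Mc}, $M_\c=c_4^2\,\mathrm{diag}(A_1,A_2,A_3)+A_4\,\mathbf{v}\mathbf{v}^{T}$ with $\mathbf v=(c_1,c_2,c_3)$, which is a rank-one perturbation of a diagonal matrix. I would use Cauchy interlacing (Weyl's inequalities for rank-one updates) to locate $\lambda_1,\lambda_2,\lambda_3$ between the ordered values $c_4^2A_i$. Together with the trivial bound $\abs{\lambda_3}\ll c_4^2\|Q\|$ (recall $\abs{c_i}\leq\abs{c_4}$), this gives the ingredients I need:
\begin{itemize}
\item upper bounds of the shape $\abs{\lambda_i}\ll c_4^2\|Q\|$ for every $i$;
\item a lower bound for the two largest eigenvalues, $\abs{\lambda_2\lambda_3}\gg c_4^4\Delta_Q/\|Q\|^2$ (up to the ordering), which is what converts the factor $\|Q\|^3\Delta_Q^{-3/2}$ into eigenvalue language.
\end{itemize}

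The second step is the geometry of the cone. If all three $\lambda_i$ have the same sign there are no nonzero real points, so assume they do not; one eigenvalue $\lambda_k$ then has sign opposite to the other two. I would split dyadically according to the sizes $\abs{y_i}\sim Y_i$, with $Y_i$ ranging over powers of $2$ between $B(\abs{\c}\|Q\|)^{-A}$ and $B$. Very small coordinates can be absorbed into the smallest dyadic range, because the eigenvalues are bounded polynomially in $\abs{\c}\|Q\|$. This produces $J=O_\ve((\abs{\c}\|Q\|)^\ve)$ boxes. On $q_\c=0$ the sizes satisfy $\abs{\lambda_k}Y_k^2\ll\max_{i\neq k}\abs{\lambda_i}Y_i^2$, and the largest of the terms $\abs{\lambda_i}Y_i^2$ is attained at least twice. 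Each relevant box lies inside the ellipsoid $\sum_i y_i^2/Y_i^2\leq3$, of volume $\asymp Y_1Y_2Y_3$. Maximising $Y_1Y_2Y_3$ subject to $Y_i\ll B$ and the balancing constraint gives at most $B^3\abs{\lambda_1\lambda_2\lambda_3}^{1/2}/\abs{\lambda_3}^{3/2}$ in the worst arrangement. The worst arrangement is the one where the largest eigenvalue is balanced against the others and everything else is saturated at $B$.

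The final step, and the one I expect to be the main obstacle, is to compare this volume with the target $B^3\abs{Q^*(\c)}\|Q\|^3/(c_4^2\Delta_Q^{3/2})=B^3\abs{\lambda_1\lambda_2\lambda_3}\|Q\|^3/(c_4^6\Delta_Q^{3/2})$. The comparison requires $\abs{\lambda_3}^{3/2}\abs{\lambda_1\lambda_2\lambda_3}^{1/2}\gg c_4^6\Delta_Q^{3/2}/\|Q\|^3$.
\begin{itemize}
\item The $\abs{\lambda_1\lambda_2\lambda_3}^{1/2}$ factor is exact.
\item The lower bound for $\abs{\lambda_3}$ must come from the interlacing information, e.g.\ $\abs{\lambda_3}\gg c_4^2\max(\abs{A_1},\abs{A_2},\abs{A_3})$ in the unperturbed directions, together with $\Delta_Q\leq\|Q\|^4$.
\end{itemize}
If the sign configuration makes the direct inequality fail in some degenerate case, I would instead refine the dyadic decomposition, cutting the middle coordinate as well. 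In the worst case I would use the linear dependence of the determinant on $\abs{Q^*(\c)}$, splitting according to whether $\abs{\lambda_1}$ is small relative to $\abs{\lambda_2}$. Whichever branch is needed, I would then verify in each case that $\min(1,\cdot)$ absorbs the remaining losses.
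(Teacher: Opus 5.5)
\textbf{There is a genuine gap, in Step~2.} Your Step~1 is sound. Checking the four sign patterns, interlacing for the rank-one update $c_4^2\,\mathrm{diag}(A_1,A_2,A_3)+A_4\mathbf{v}\mathbf{v}^{T}$ gives two eigenvalues whose product has absolute value at least $c_4^4\abs{A_iA_j}\geq c_4^4\Delta_Q/\|Q\|^2$, for some $i\neq j$ in $\{1,2,3\}$. The paper gets the same bound from the bordered $4\times 4$ matrix $N_\c$ of determinant $\pm\Delta_Q$.

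The problem is that axis-parallel dyadic boxes in the eigen-coordinates cannot see cancellation between the two dominant terms when these have opposite signs. Take $Q=x_1^2-x_2^2+x_3^2-x_4^2$ and $\c=(0,0,K-1,K)$. Then $M_\c=\mathrm{diag}(K^2,-K^2,2K-1)$ and $Q^*(\c)=-(2K-1)$, so the lemma asks for ellipsoids of volume $\ll B^3/K$. But the zero set of $x_1^2-x_2^2+\eta x_3^2$, with $\eta=(2K-1)/K^2$, meets the box $\abs{x_1},\abs{x_2},\abs{x_3}\sim B$, whose volume is $\asymp B^3$.
\begin{itemize}
\item Your balancing constraint allows $Y_1=Y_2=Y_3=B$ here, so the claimed maximum $B^3\abs{\lambda_1\lambda_2\lambda_3}^{1/2}/\abs{\lambda_3}^{3/2}$ (which here is $\asymp B^3\eta^{1/2}$) is false.
\item Refining the decomposition does not help. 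No family of $O((\abs{\c}\|Q\|)^{\ve})$ axis-parallel boxes can cover the thin sheet near the planes $x_1=\pm x_2$ with each box of volume $O(B^3/K)$.
\item Even granting your expression, it is proportional to $\abs{Q^*(\c)}^{1/2}$, whereas the target is linear in $\abs{Q^*(\c)}$. So the inequality you need in Step~3 fails unless $\abs{Q^*(\c)}$ is essentially maximal: when all $\abs{A_i}\asymp N$ it requires $\abs{Q^*(\c)}\gg c_4^2N^3$. Your sketched fallback does not address this.
\end{itemize}

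\textbf{The missing idea.} This is what the paper imports from the second half of the proof of \cite[Lemma 2.2]{BrowningHeathBrown}. With $\abs{\lambda_1}$ the smallest eigenvalue, the correct volume is $B^3\abs{\lambda_1}/\abs{\lambda_2\lambda_3}^{1/2}$. To reach it you decompose along the linear factors of the dominant binary part, not along the coordinates.

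If $\lambda_2,\lambda_3$ have the same sign, then on the cone $\abs{\lambda_2}y_2^2+\abs{\lambda_3}y_3^2=\abs{\lambda_1}y_1^2\ll\abs{\lambda_1}B^2$. So one ellipsoid of that volume suffices.

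If they have opposite signs, write $\lambda_2y_2^2+\lambda_3y_3^2=\pm uv$, with $u=\abs{\lambda_2}^{1/2}y_2-\abs{\lambda_3}^{1/2}y_3$ and $v=\abs{\lambda_2}^{1/2}y_2+\abs{\lambda_3}^{1/2}y_3$. On the cone $\abs{uv}=\abs{\lambda_1}y_1^2\ll\abs{\lambda_1}B^2$. Splitting dyadically in $\abs{u}$ gives $O(1+\log(\abs{\c}\|Q\|))$ parallelepipeds $\abs{u}\leq 2U$, $\abs{v}\ll\abs{\lambda_1}B^2/U$, $\abs{y_1}\ll B$. Each has volume $\ll B^3\abs{\lambda_1}/\abs{\lambda_2\lambda_3}^{1/2}$ and lies in an ellipsoid of comparable volume.

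Your Step~1 then finishes the proof at once, exactly as in the paper:
\[
\frac{\abs{\lambda_1}}{\abs{\lambda_2\lambda_3}^{1/2}}=\frac{\abs{\lambda_1\lambda_2\lambda_3}}{\abs{\lambda_2\lambda_3}^{3/2}}\ll\frac{c_4^4\abs{Q^*(\c)}}{\left(c_4^4\Delta_Q/\|Q\|^2\right)^{3/2}}=\frac{\abs{Q^*(\c)}\,\|Q\|^3}{c_4^2\Delta_Q^{3/2}}.
\]
It is the lower bound for the product $\abs{\lambda_2\lambda_3}$ that does the work. A lower bound for $\abs{\lambda_3}$ alone, as in your Step~3, loses too much.
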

	
	\begin{proof}
		Let $z=(c_1x_1+c_2x_2+c_3x_3+c_4x_4)/c_4$. We have
		\begin{align*}
			c_4^2Q(\x)=&A_1c_4^2x_1^2+A_2c_4^2x_2^2+A_3c_4^2x_3^2+A_4(c_4x_4)^2
			\\=&A_1c_4^2x_1^2+A_2c_4^2x_2^2+A_3c_4^2x_3^2+A_4(c_4z-c_1x_1-c_2x_2-c_3x_3)^2\\=&
			q_{\c}(\x)-2A_4c_1c_4x_1z-2A_4c_2c_4x_2z-2A_4c_3c_4x_3z+A_4c_4^2z^2.
		\end{align*}
		Let
		\[
		f_{\c}(x_1,x_2,x_3,z)=\frac{q_{\c}(\x)-2A_4c_1c_4x_1z-2A_4c_2c_4x_2z-2A_4c_3c_4x_3z+A_4c_4^2z^2}{c_4^2},
		\]
		and let $N_{\c}$ be the associated $4\times 4$ matrix, so that $\abs{\det(N_{\c})}=\Delta_Q$. Notice that the $3\times 3$ upper left sub-matrix of $N_{\c}$ is $M_{\c}/c_4^2$, where $M_{\c}$ is defined in~\eqref{eq:Mc}. Let $U'$ be the orthogonal matrix that diagonalises the symmetric matrix $M_{\c}/c_4^2$, and let 
		\[
		U=\begin{pmatrix}
			U' & 0\\
			0&1\\
		\end{pmatrix}.
		\]
Then 
		\[
		U^TN_{\c}U=\begin{pmatrix}
			\mu_1 & 0&0&*\\
			0 & \mu_2&0&*\\
			0 & 0&\mu_3&*\\
			* & *&*&*\\
		\end{pmatrix}.
		\]
		Notice that the determinant of the matrix is $\det(N_\c)$ and all the entries are bounded by $O(\|Q\|)$ (since the entries of an orthogonal matrix have absolute value bounded by $1$). If $\abs{\mu_3}\leq\abs{\mu_2}\leq \abs{\mu_1}$, then the determinant of the matrix is $O(\abs{\mu_1}\abs{\mu_2}\|Q\|^2)$ and so $\abs{\mu_1\mu_2}\gg \Delta_Q/\|Q\|^2$. Moreover, 
		\[
		\mu_1\mu_2\mu_3=\frac{\det(M_{\c})}{c_4^6}=\frac{Q^*(\c)}{c_4^2},
		\] 
from which it follows that 
		\[
		\frac{\abs{\mu_3}}{\abs{\mu_1\mu_2}^{\frac{1}{2}}}=\frac{\abs{\mu_3\mu_2\mu_1}}{\abs{\mu_1\mu_2}^{\frac 32}}\ll \frac{\abs{Q^*(\c)}\|Q\|^3}{c_4^2\Delta_Q^{\frac{3}{2}}}.
		\]
		We now appeal to the second half of the proof of~\cite[Lemma 2.2]{BrowningHeathBrown}.
Given $f(z_1,z_2,z_3)=\mu_1z_1^2+\mu_2z_2^2+\mu_3z_3^2$, this produces covering ellipsoids $\{E_j\}_{1\leq j\leq J}$, with $J=O_{\ve}(\|f\|^\ve)$, which are centred at the origin and of measure
		\[
		O\left(	B^3\min\left(1,\frac{\abs{\mu_3}}{\abs{\mu_1\mu_2}^{\frac{1}{2}}}\right)\right).
		\]
In our setting this implies that the solutions to $q_\c(\x)=0$
of bounded height 
 lie on $O_{\ve}((\abs{\c}\|Q\|)^{\ve})$ ellipsoids, each of volume
		\[
\ll B^3\min\left(1,\frac{\abs{\mu_3}}{\abs{\mu_1\mu_2}^{\frac{1}{2}}}\right) \ll 	B^3\min\left(1,\frac{\abs{Q^*(\c)}\|Q\|^3}{c_4^2\Delta_Q^{\frac{3}{2}}}\right),
		\]
which thereby completes the proof. 
	\end{proof}
	
	Recall that $d_{\c}'$ is defined to be the greatest common divisor of the entries of $D_\c'$.
	The following result allows us to cover the integer solutions $q_\c=0$ by a small number of lattices of large discriminant. 
	
	\begin{lemma}\label{lem:lattices}
		Let $\c=(c_1,c_2,c_3,c_4)\in\ZZ_{\prim}^4$ with $\abs{c_4}=\abs{\c}$ and $Q^*(\c)\neq 0$.
		There exist lattices $\{\Lambda_i\}_{1\leq i\leq I}$, with $I=O_{\ve}((\abs{\c}\|Q\|)^\ve)$, such that $\Lambda_i\subset \ZZ^3$ and 
		\[
		\det \Lambda_i \gg \frac{\abs{c_4Q^*(\c)}}{\abs{g_{\c}Td_\c'}^{\frac{3}{2}}},
		\]
		for $1\leq i\leq I$, and 
		such that 
		\[
		\left\{\x\in \ZZ_{\prim}^3: q_\c(\x)=0\right\}\subset \bigcup_{i=1}^I\Lambda_i.
		\]
	\end{lemma}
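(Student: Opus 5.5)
The plan is to deduce the lemma from a general lattice covering result for isotropic ternary integral quadratic forms, applied to $q_\c$ with matrix $M_\c$. The result I have in mind is the one in Browning--Heath-Brown~\cite{BrowningHeathBrown}, going back to Heath-Brown. It states the following. Let $q$ be a nonsingular integral ternary quadratic form with matrix $M$, put $\Delta=|\det M|$, and let $D$ be the greatest common divisor of the entries of $\adj M$, that is, of the $2\times 2$ minors of $M$. Then the primitive zeros of $q$ lie in $O_\ve(\Delta^\ve)$ sublattices of $\ZZ^3$, each of determinant $\gg \Delta/D^{3/2}$.

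If that result is not available in exactly this form, I would prove it prime by prime. For $p^{e}\,\|\,\Delta$, bring $M$ into Jordan form $\mathrm{diag}(u_1p^{a_1},u_2p^{a_2},u_3p^{a_3})$ over $\ZZ_p$, with $a_1\le a_2\le a_3$. Then $v_p(D)=a_1+a_2$ and $e=a_1+a_2+a_3$. A primitive $p$-adic zero is forced, by an elementary valuation argument on $u_1p^{a_1}y_1^2+u_2p^{a_2}y_2^2+u_3p^{a_3}y_3^2=0$, into one of $O(1)$ residue classes of lattices of index $\ge p^{\,e-\frac32(a_1+a_2)}$. Here one uses that $(a_3-a_2)$ and the parity conditions give the congruence constraints on $y_1,y_2$, and that the factor $\tfrac32$ is exactly what is lost when $a_1<a_2$. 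Finally one takes the intersection over primes $p\mid\Delta$, using the Chinese remainder theorem. The number of lattices is then $\prod_{p\mid\Delta}O(1)\ll_\ve\Delta^\ve$.

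The prime $p=2$ needs separate care, since Jordan blocks of size $2$ can occur. At $p=2$ one only loses a bounded power of $2$ and a bounded number of lattices, and both are absorbed into the implied constants. I expect this local analysis, in particular making the exponent $\tfrac32$ precise uniformly in the $a_i$ and at $p=2$, to be the main obstacle if it has to be done from scratch. I would first try to quote it directly from~\cite{BrowningHeathBrown}.

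With the general result in hand, the rest is bookkeeping. We have $\det M_\c=c_4^4Q^*(\c)\neq 0$, so $\Delta=c_4^4|Q^*(\c)|$. The gcd of the entries of the adjugate $D_\c$ is $d_\c$, and by definition of $D_\c'$ it equals $c_4^2g_\c T_\c d_\c'$ exactly. Hence
\[
\frac{\Delta}{d_\c^{3/2}}=\frac{c_4^4|Q^*(\c)|}{|c_4|^3(g_\c T_\c d_\c')^{3/2}}=\frac{|c_4Q^*(\c)|}{(g_\c T_\c d_\c')^{3/2}}.
\]
Since $A_4'\mid A_4$, each generator in \eqref{eq:Tcc} divides the corresponding generator in \eqref{eq:T}, so $T_\c\mid T$ and in particular $T_\c\le T$. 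Replacing $T_\c$ by $T$ therefore only weakens the lower bound, which gives the stated bound on $\det\Lambda_i$.

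For the count of lattices, $|Q^*(\c)|\ll|\c|^2\|Q\|^3$, so $\Delta\ll|\c|^6\|Q\|^3$. Hence $I\ll_\ve\Delta^{\ve}\ll_\ve(|\c|\|Q\|)^{\ve}$, after rescaling $\ve$.
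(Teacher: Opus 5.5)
Your proposal is correct and is essentially the paper's proof. The paper quotes \cite[Lemma 2.6]{BrowningHeathBrown} to cover the primitive zeros of $q_\c$ by $O_\ve((|\c|\|Q\|)^\ve)$ lattices of determinant $\gg |\det M_\c|/d_\c^{3/2}$, then substitutes $\det M_\c=c_4^4Q^*(\c)$, $d_\c=c_4^2g_\c T_\c d_\c'$ and $T_\c\le T$, exactly as in your bookkeeping (so the prime-by-prime fallback is not needed, and if carried out its natural valuation analysis gives $O(1+v_p(\Delta))$ rather than $O(1)$ lattices per prime, which is still harmless).
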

	
	\begin{proof}
	This is an easy consequence of~\cite[Lemma 2.6]{BrowningHeathBrown}, 
	which shows that 
	the solutions of $q_\c=0$ lie on $I=O((\abs{\c}\|Q\|)^\ve)$ integer lattices of discriminant 
		\[
		\gg \frac{\abs{\det(M_{\c})}}{\abs{d_\c}^{\frac{3}{2}}}.
		\]
		We conclude by observing that 
		$\det(M_{\c})=c_4^4 Q^*(\c) $ and 
		$d_\c=d_\c'T_{\c}c_4^2g_\c$ and $T_{\c}\leq T$.
	\end{proof}
	
	\begin{lemma}\label{lem:latell}
		Let $\Lambda$ be a lattice and $E$ be an ellipsoid centred at the origin in $\RR^3$. There exists a basis $\f_1,\f_2,\f_3$ of $\Lambda$ and positive numbers $L_1,L_2,L_3$ satisfying 
		\[
		L_1L_2L_3\ll\frac{\meas{E}}{\det(\Lambda)},
		\]
such that any $\x\in E\cap \Lambda$ can be written $\x=y_1\f_1+y_2 \f_2+y_3 \f_3$ for $y_i\in \ZZ$ satisfying $\abs{y_i}\leq L_i$, for $i=1,2,3$.
	\end{lemma}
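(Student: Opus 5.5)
We reduce to the standard geometry of numbers for a lattice against a convex body, after transforming the ellipsoid into a ball. Write $E=\{\x\in\RR^3: \x^T S\x\leq 1\}$ with $S$ positive definite. Choose a linear map $\phi$ with $\phi(E)$ equal to the unit ball $\mathcal{B}$; for instance $\phi=S^{1/2}$. Then $\phi(\Lambda)$ is a lattice with $\det\phi(\Lambda)=\det(\Lambda)\cdot|\det\phi|$. Moreover $|\det\phi|=\vol(\mathcal{B})/\vol(E)$. Points of $E\cap\Lambda$ correspond bijectively to points of $\mathcal{B}\cap\phi(\Lambda)$, and coordinates with respect to a basis are preserved under $\phi$. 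So it suffices to prove the statement when $E=\mathcal{B}$ is the unit ball, with bound $L_1L_2L_3\ll 1/\det(\phi(\Lambda))$.

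For a lattice $\Gamma\subset\RR^3$, I would take a Minkowski-reduced (or LLL-reduced, or Davenport's) basis $\g_1,\g_2,\g_3$ with $|\g_1|\leq|\g_2|\leq|\g_3|$. Such a basis has two standard properties:
\begin{enumerate}
\item[(i)] $|\g_1||\g_2||\g_3|\asymp\det\Gamma$.
\item[(ii)] For any $\x=\sum y_i\g_i$ we have $|y_i|\ll|\x|/|\g_i|$.
\end{enumerate}
This is the usual quasi-orthogonality of reduced bases (e.g.\ Davenport's lemma, as used in Heath-Brown's and Browning--Heath-Brown's work). For $\x\in\mathcal{B}$ this gives $|y_i|\leq L_i:=c/|\g_i|$ for an absolute constant $c$. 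Then
\[
L_1L_2L_3\ll\frac{1}{|\g_1||\g_2||\g_3|}\ll\frac{1}{\det\Gamma}.
\]
Pulling back via $\f_i=\phi^{-1}(\g_i)$ gives a basis of $\Lambda$ with the same integer coordinates. We then get $L_1L_2L_3\ll \vol(E)/(\vol(\mathcal{B})\det\Lambda)$, which is the claim.

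One subtlety arises if some $|\g_i|$ is large, so that $L_i<1$. Then $y_i=0$ is forced, and the bound $|y_i|\leq L_i$ is still true. So we do not need $L_i\geq1$, and positivity of the $L_i$ is automatic.

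The only real content is property (ii) for reduced bases in dimension 3. I would cite it: for a Minkowski-reduced basis in fixed dimension, $|\sum y_i\g_i|\gg\max_i|y_i||\g_i|$, which follows from the bounded orthogonality defect together with Cramer's rule. This is the main step, but it is standard, so the lemma is essentially routine.
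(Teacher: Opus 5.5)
Your proposal is correct and follows essentially the same route as the paper: send $E$ to the unit ball by a linear map, take a reduced basis $\g_1,\g_2,\g_3$ of the image lattice with the quasi-orthogonality property $|y_i|\ll|\z|/|\g_i|$ (the paper cites \cite[Lemma 2.3]{BrowningHeathBrown} for this), set $L_i\asymp 1/|\g_i|$, and pull the basis back. One small difference is that the paper's last step needs only Hadamard's inequality $|\g_1||\g_2||\g_3|\geq\det\Lambda'$, which holds for any basis, so the upper bound in your property (i) enters only through your justification of (ii).
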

	
	\begin{proof}
		Let $A\in \operatorname{GL}_3(\RR)$ send $E$ to the unit ball, so that 
		$\det(A)\meas{E}=\vol(B_3)\asymp 1$. Let $\Lambda'=A\Lambda$ and $\g=(\g_1,\g_2,\g_3)$ be a basis of $\Lambda'$ as in~\cite[Lemma 2.3]{BrowningHeathBrown}. 
		If $\z=\sum_{1\leq i\leq 3} y_i \g_i\in \Lambda'$ and $\z$ belongs to the unit cube, then $\abs{y_i}\ll 1/\abs{\g_i}$. 
		Let $\f=A^{-1}\g$, which is a basis of $\Lambda$. If $\x=\sum_{1\leq i\leq 3} y_i \f_i\in \Lambda$ and $\x$ belongs to $E$, then $\z=A\x$ is such that $\z=\sum_{1\leq i\leq 3} y_i \g_i\in \Lambda'$ and $\z$ belongs to the unit cube, so $\abs{y_i}\ll 1/\abs{\g_i}$. Letting $L_i=1/\abs{\g_i}$, we conclude by noticing that
		\[
		L_1L_2L_3= \frac{1}{\abs{\g_1}\abs{\g_2}\abs{\g_3}}\leq \frac{1}{\det(\Lambda')}=\frac{1}{\det(A)\det(\Lambda)}\ll \frac{\meas{E}}{\det(\Lambda)}. \qedhere
		\] 
	\end{proof}

Let 
\begin{equation}\label{eq:mule}	
N(q_{\c},B)=\#\left\{ \x\in \ZZp^3: q_{\c}(\x)=0, ~|\x|\leq B\right\}
\end{equation}
and let $N^0(q_{\c},B)$ denote the corresponding quantity in which any points on $\QQ$-lines contained in the conic $q_\c=0$ are deleted from the count. We now have everything in place to record the following upper bound.
	
	\begin{lemma}\label{lem:countM1}
		Let $\c=(c_1,c_2,c_3,c_4)\in\ZZ_{\prim}^4$ with $\abs{c_4}=\abs{\c}$. For all $\ve>0$, we have
		\[
		N^0(q_{\c},B)\ll_{\ve} \|Q\|^{\ve}\abs{\c}^{\ve}\left(	1+ B\min\left(1,\frac{\abs{Q^*(\c)}^{\frac 13}\|Q\|}{c_4^{\frac 23}\Delta_Q^{\frac{1}{2}}}\right)\frac{ g_{\c}^{\frac{1}{2}}T^{\frac{1}{2}} d_\c'^{\frac{1}{2}}}{\abs{c_4}^{\frac{1}{3}}\abs{Q^*(\c)}^{\frac{1}{3}}}\right).
		\]
	\end{lemma}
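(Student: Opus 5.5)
The plan is to combine Lemmas~\ref{lem:ellips}, \ref{lem:lattices} and \ref{lem:latell}, and then count points on a conic inside a box using the standard bound for integer points on a plane conic. The case $Q^*(\c)=0$ needs separate treatment. There the conic $q_\c=0$ is singular, so it is either a pair of lines or a single point. If it is a pair of $\QQ$-lines, every point is deleted. If the lines are conjugate, or the conic degenerates to a point, there is at most one rational point, which gives the term $1$. So we may assume $Q^*(\c)\neq 0$, in which case $q_\c$ is a nonsingular ternary form.

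By Lemma~\ref{lem:ellips}, the primitive solutions with $|\x|\le B$ lie in $O_\ve((|\c|\|Q\|)^\ve)$ origin-centred ellipsoids $E_j$ with
\[
\meas E_j\ll B^3\min\Bigl(1,\frac{|Q^*(\c)|\|Q\|^3}{c_4^2\Delta_Q^{3/2}}\Bigr).
\]
By Lemma~\ref{lem:lattices}, they also lie in $O_\ve((|\c|\|Q\|)^\ve)$ lattices $\Lambda_i$ with $\det\Lambda_i\gg |c_4Q^*(\c)|/(g_\c T d_\c')^{3/2}$. It therefore suffices to bound the points in a fixed pair $E_j\cap\Lambda_i$.

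For a fixed pair, apply Lemma~\ref{lem:latell} to obtain a basis $\f_1,\f_2,\f_3$ and lengths $L_1,L_2,L_3$. Writing $\x=\sum y_k\f_k$ turns the problem into counting primitive zeros $\y$ of the nonsingular ternary quadratic form $q_\c(\sum y_k\f_k)$ with $|y_k|\le L_k$. Here we invoke the uniform bound for conics in lopsided boxes, \cite[Lemma~2.5 / Corollary~2]{BrowningHeathBrown}: the number of primitive points, excluding those on $\QQ$-lines, is
\[
\ll_\ve \|q\|^\ve\bigl(1+(L_1L_2L_3)^{1/3}\bigr).
\]
The coefficients of the transformed form are polynomially bounded in $|\c|$, $\|Q\|$ and $B$. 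Since $B\le$ poly is harmless, the factor $B^\ve$ can be absorbed. If needed, restrict to $B$ polynomially bounded, since otherwise the trivial comparisons suffice.

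Inserting $L_1L_2L_3\ll \meas E_j/\det\Lambda_i$ gives
\[
(L_1L_2L_3)^{1/3}\ll B\min\Bigl(1,\frac{|Q^*(\c)|\|Q\|^3}{c_4^2\Delta_Q^{3/2}}\Bigr)^{1/3}\frac{(g_\c Td_\c')^{1/2}}{|c_4|^{1/3}|Q^*(\c)|^{1/3}}.
\]
The cube root of the minimum is exactly $\min\bigl(1,|Q^*(\c)|^{1/3}\|Q\|/(c_4^{2/3}\Delta_Q^{1/2})\bigr)$. Summing over the $O_\ve((|\c|\|Q\|)^\ve)$ pairs then gives the stated bound.

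The main obstacle is ensuring that the cited conic-in-a-box bound has the form $(L_1L_2L_3)^{1/3}$ times a small power of the height, with no dependence on the discriminant of the transformed form, and that points on rational lines are correctly excluded when the transformed conic splits. If only a bound of the shape $1+(L_1L_2L_3/\det)^{1/3}$ is available, I would instead feed the lattice information directly into that lemma, using the determinant of $q_\c$ rather than Lemma~\ref{lem:latell}. Either way, the exponents should match.
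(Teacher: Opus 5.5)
Your proposal is correct and is essentially the paper's own proof. You dispose of the singular case directly, cover the points by $O_\ve((|\c|\|Q\|)^\ve)$ lattice--ellipsoid intersections via Lemmas~\ref{lem:ellips} and~\ref{lem:lattices}, change basis using Lemma~\ref{lem:latell} (primitive $\x$ forces primitive $\y$ because the $\f_k$ are integral), and apply \cite[Lemma~2.5]{BrowningHeathBrown} to get $1+(L_1L_2L_3)^{1/3}$. The paper uses that lemma with no coefficient-dependent loss, so your detour about absorbing a $\|q\|^{\ve}$ (and hence possibly $B^{\ve}$) factor is unnecessary --- just as well, since the stated bound has no $B^{\ve}$ and your remark about restricting to polynomially bounded $B$ is not rigorous as written.
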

	
	\begin{proof}
		If $q_\c$ is singular, then it splits into two lines. If the lines are defined over $\QQ$, then $N^0(q_\c,B)=0$. If the lines are not rational, then each of them contains at most one rational point, so that $N^0(q_\c,B)=O(1)$ in this case.
		
		Assume that $q_\c$ is nonsingular. By Lemmas~\ref{lem:ellips} and~\ref{lem:lattices}, points counted by $N^0(q_{\c},B)$ lie on $O_{\ve}((\abs{\c}\|Q\|)^{\ve})$ intersections of a lattice with an ellipsoid centred at the origin. Fix one such intersection and let $q_\c'(\y)=q_\c(y_1\f_1+y_2\f_2+y_3\f_3)$, with $\f_1,\f_2,\f_3$ as in Lemma~\ref{lem:latell}. Since $\f_1,\f_2,\f_3$ are integer vectors, it follows that $(x_1,x_2,x_3)$
is nonprimitive if $(y_1,y_2,y_3)$ is nonprimitive. Hence it follows from Lemma~\ref{lem:latell} 
and~\cite[Lemma 2.5]{BrowningHeathBrown} that 
	\[
		N^0(q_{\c},B)\leq \#\{\y\in\ZZ_{\prim}^3: q_\c'(\y)=0, \abs{y_i}\leq L_i \}\ll 1+(L_1L_2L_3)^{\frac 13}.
		\] 
		We conclude by combining Lemmas~\ref{lem:ellips},~\ref{lem:lattices}, and ~\ref{lem:latell}.
	\end{proof}
	
	It remains to sum this bound over all $\c\in M_1(C,g,R,h)$,
	in the notation of \eqref{eq:defM1}.
	 Recalling that we defined $m_Q=\min_{1\leq i\leq 4}\{\abs{A_i}\}$ at the start of Section~\ref{sec:quad}, we proceed by proving the following result.
	
	\begin{lemma}\label{lem:contM1}
		Let $C,g,R,h\in \ZZ_{\geq 1}$ and let $\ve>0$. Then 
		\begin{align*}
			&\sum_{\c\in M_1(C,g,R,h)}N^0(q_{\c},B)\ll_{\ve}
			(BC\|Q\|)^{\ve}\left(\#M_1(C,g,R,h) +
			BT^{\frac{1}{2}}E_1(C,Q)
\right),
		\end{align*}
where
\[
E_1(C,Q)=			\frac{C^3\|Q\|^{\frac{1}{2}}}{m_Q^{\frac{1}{6}}\Delta_Q^{\frac{1}{3}}}+\frac{C\min\left\{\|Q\|^{\frac 14},C^{\frac 12}\right\}\|Q\|}{\Delta_Q^{\frac{1}{2}}}.
\]
	\end{lemma}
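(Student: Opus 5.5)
The plan is to sum the pointwise bound of Lemma~\ref{lem:countM1} over $\c\in M_1(C,g,R,h)$ and then insert the cardinality bound of Lemma~\ref{lem:cardM}. For every $\c\in M_1(C,g,R,h)$ we have $|c_4|=|\c|\sim C$, $g_\c=g$, $d_\c'=h$ and $|Q^*(\c)|\sim R$. In particular $Q^*(\c)\neq 0$, since $R\geq 1$. Hence Lemma~\ref{lem:countM1} gives, uniformly in $\c\in M_1(C,g,R,h)$,
\[
N^0(q_\c,B)\ll_\ve (C\|Q\|)^{\ve}\left(1+B\,\Phi\,\frac{g^{\frac12}T^{\frac12}h^{\frac12}}{C^{\frac13}R^{\frac13}}\right),
\qquad
\Phi=\min\left(1,\frac{R^{\frac13}\|Q\|}{C^{\frac23}\Delta_Q^{\frac12}}\right).
\]
Dyadic ranges only cost absolute constants. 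Summing, the ``$1$'' term gives exactly the $\#M_1(C,g,R,h)$ contribution. The second term is $B T^{\frac12}$ times $\#M_1(C,g,R,h)\cdot g^{\frac12}h^{\frac12}C^{-\frac13}R^{-\frac13}\Phi$. We bound $\#M_1(C,g,R,h)$ by the two terms of Lemma~\ref{lem:cardM} and treat each separately.

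Before doing so we record the trivial size bound $|Q^*(\c)|\ll C^2\Delta_Q/m_Q$. This holds because each monomial of $Q^*$ is $\Delta_Q/|A_i|$ times $c_j^2$, so $R\ll C^2\Delta_Q/m_Q$. In particular $R\ll C^2\|Q\|^3$, so every factor $R^\ve$ from Lemma~\ref{lem:cardM} is absorbed into $(C\|Q\|)^\ve$.

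For the first term of Lemma~\ref{lem:cardM}, multiplying out gives
\[
\frac{C^{\frac83}R^{\frac16}\|Q\|^{\frac12}}{h^{\frac12}\Delta_Q^{\frac12}}\,\Phi .
\]
Here I would take $\Phi\leq 1$, drop $h\geq 1$, and use $R^{\frac16}\ll C^{\frac13}\Delta_Q^{\frac16}m_Q^{-\frac16}$. This yields $C^3\|Q\|^{\frac12}m_Q^{-\frac16}\Delta_Q^{-\frac13}$, which is the first term of $E_1(C,Q)$.

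For the second term of Lemma~\ref{lem:cardM}, the powers of $g$ and $h$ cancel, leaving $C^{\frac53}(g^\square)^{\frac14}R^{-\frac13}\Phi$. Now I would use the other branch of the minimum, $\Phi\leq R^{\frac13}\|Q\|C^{-\frac23}\Delta_Q^{-\frac12}$. The $R$-dependence then cancels, giving $C(g^\square)^{\frac14}\|Q\|\Delta_Q^{-\frac12}$. Finally, $g^\square\leq g=\gcd(A_4,c_4^2)\leq\min(\|Q\|,4C^2)$, so $(g^\square)^{\frac14}\ll\min\{\|Q\|^{\frac14},C^{\frac12}\}$. This produces the second term of $E_1(C,Q)$.

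The only genuinely delicate point is choosing the correct branch of $\Phi$ in each of the two terms. In the first term the power of $R$ is positive, so we must use $\Phi\le 1$ together with the a priori bound on $R$. In the second term the power of $R$ is negative, and the second branch of $\Phi$ exactly cancels it. The rest is bookkeeping of $\ve$-powers.
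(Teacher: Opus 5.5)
Your proposal is correct and follows the paper's proof essentially step for step. You insert the pointwise bound of Lemma~\ref{lem:countM1}, bound $\#M_1(C,g,R,h)$ by Lemma~\ref{lem:cardM}, and then treat the two terms separately: for the first you use $\Phi\le 1$ together with $R\ll C^2\Delta_Q/m_Q$, and for the second you use the other branch of the minimum together with $g^{\square}\le \min\{\|Q\|,4C^2\}$. The only cosmetic difference is that the paper discards the surplus factor $h^{-1/2}$ at the outset, writing the cardinality bound as $\Upsilon/(g^{1/2}h^{1/2})$, whereas you carry it along and drop it at the end.
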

	
	\begin{proof}
For any 	$\c \in M_1(C,g,R,h)$, Lemma~\ref{lem:countM1} implies that 
		\[
		N^0(q_{\c},B)\ll_{\ve} (C\|Q\|)^{\ve}\left(1+	B\min\left(1,\frac{R^{\frac 13}\|Q\|}{C^{\frac 23}\Delta_Q^{\frac{1}{2}}}\right)\frac{ g^{\frac{1}{2}}T^{\frac{1}{2}} h^{\frac{1}{2}}}{C^{\frac{1}{3}}R^{\frac{1}{3}}}\right).
		\]
		In order for $M_1(C,g,R,h)$ to be nonempty, we need to have 
		\[
		g^{\square}\leq g\ll \min\left\{\abs{A_4},C^2\right\}\leq \min\left\{\|Q\|,C^2\right\}
		\]
and 
		\[
		R\ll \frac{\Delta_QC^2}{m_Q}.
		\]
Thus, in the light of Lemma~\ref{lem:cardM}, we see that
		\begin{align*}
		\sum_{\c\in M_1(C,g,R,h)}N^0(q_{\c},B)
		\ll~& (C\|Q\|)^{\ve}\frac{\Upsilon}{g^{\frac{1}{2}}h^{\frac{1}{2}}}\cdot
\left(	B\min\left(1,\frac{R^{\frac 13}\|Q\|}{C^{\frac 23}\Delta_Q^{\frac{1}{2}}}\right)\frac{ g^{\frac{1}{2}}T^{\frac{1}{2}} h^{\frac{1}{2}}}{C^{\frac{1}{3}}R^{\frac{1}{3}}}\right)\\
&		+(C\|Q\|)^{\ve}\#M_1(C,g,R,h).
\end{align*}
where 
\[
\Upsilon=
\frac{C^3R^{\frac{1}{2}} \|Q\|^{\frac{1}{2}}}{\Delta_Q^{\frac{1}{2}}}+
C^2\min\left\{\|Q\|^{\frac 14},C^{\frac 12}\right\}.
\]
Thus 
		\begin{align*}
		\sum_{\c\in M_1(C,g,R,h)}N^0(q_{\c},B)
		\ll~& (C\|Q\|)^{\ve} BT^{\frac{1}{2}}\Upsilon 
\left(	\min\left(1,\frac{R^{\frac 13}\|Q\|}{C^{\frac 23}\Delta_Q^{\frac{1}{2}}}\right)\frac{ 
1}{C^{\frac{1}{3}}R^{\frac{1}{3}}}\right)\\
&		+(C\|Q\|)^{\ve}\#M_1(C,g,R,h).
\end{align*}
		We clearly have
		\[
		\frac{R^{\frac 13}\|Q\|}{C^{\frac 23}\Delta_Q^{\frac{1}{2}}C^{\frac{1}{3}}R^{\frac{1}{3}}}= \frac{\|Q\|}{C\Delta_Q^{\frac{1}{2}}},
		\]
whence
		\[
		C^2\min\left\{\|Q\|^{\frac 14},C^{\frac 12}\right\}
		\left(	\min\left(1,\frac{R^{\frac 13}\|Q\|}{C^{\frac 23}\Delta_Q^{\frac{1}{2}}}\right)
		\frac{1}{C^{\frac{1}{3}}R^{\frac{1}{3}}}\right)\leq\frac{C\min\left\{\|Q\|^{\frac 14},C^{\frac 12}\right\}\|Q\|}{\Delta_Q^{\frac{1}{2}}}.
		\]
		Moreover, on recalling that $R\ll C^2\Delta_Q/m_Q$, we have
		\[
		\left(\frac{C^3R^{\frac{1}{2}} \|Q\|^{\frac{1}{2}}}{\Delta_Q^{\frac{1}{2}}C^{\frac{1}{3}}R^{\frac{1}{3}}}\right)
		\min\left(1,\frac{R^{\frac 13}\|Q\|}{C^{\frac 23}\Delta_Q^{\frac{1}{2}}}\right)
		\ll \left(\frac{C^3R^{\frac{1}{2}} \|Q\|^{\frac{1}{2}}}{ \Delta_Q^{\frac{1}{2}}}\right)\frac{1}{C^{\frac{1}{3}}R^{\frac{1}{3}}}\ll \frac{C^3\|Q\|^{\frac{1}{2}}}{m_Q^{\frac{1}{6}}\Delta_Q^{\frac{1}{3}}},
		\]
which completes the proof. 
	\end{proof}
	
	\subsection{One vanishing coordinate}\label{sec:one}

	In the previous section, we focused on the contribution from vectors $\c\in \ZZp^4$ in which $c_1\cdots c_4\neq 0$. We must now contend with the contribution from those $\c$ for which 
	$c_1\cdots c_4=0$, beginning here with the $\c$ such that $c_1=0$, $c_2c_3c_4\neq 0$, and $\abs{\c}=\abs{c_4}$. 
	Recalling 
	the definition \eqref{eq:defDc} of 
	$D_{\c}$, we now see that 
	\[
\begin{small}
	D_{\c}=\begin{pmatrix}
		c_4^2(A_2A_3c_4^2+A_2A_4c_3^2+A_3A_4c_2^2) 	& 0& 0 \\
		0& 	c_4^2(A_1A_3c_4^2+A_1A_4c_3^2) &- c_4^2c_2c_3A_1A_4 \\
		0& -c_4^2c_2c_3A_1A_4& c_4^2(A_1A_2c_4^2+A_1A_4c_2^2)\\
	\end{pmatrix}.
\end{small}
	\]
Recall that $g_\c=\gcd(A_4,c_4^2)$ and that $T_\c$ is given by \eqref{eq:Tcc}, with $A_4'=A_4/g_\c$.
Let $
	C_4=c_4^2/g_{\c}$
and 
\[
\gamma_\c=\frac{A_2A_3C_4+A_2A_4'c_3^2+A_3A_4'c_2^2}{T_{\c}}.
\]
Furthermore, let 
$
\delta_{\c}=\gcd\left(A_1,\gamma_\c\right).
$ Then we can write $c_4^2g_{\c}T_{\c}\gamma_{\c}$ for the top left entry of $D_\c$.
Recall that $d_\c$ is the greatest common divisor of the entries of $D_\c$. In this section we take 
$d_\c''$ to be the 
greatest common divisor of the entries of $D_\c'/\delta_\c$, where 
	\[
	D_\c'=\frac{D_\c}{c_4^2g_{\c}T_{\c}},
	\] 
	We now establish the following key divisibility property. 
	
	\begin{lemma}\label{lem:goat}
	We have 
		$d_\c''\mid c_2c_3$.
	\end{lemma}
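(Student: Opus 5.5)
This is a $p$-adic valuation argument modelled on the proof of Lemma~\ref{lem:cimidhi}. We fix a prime $p$ dividing $d_\c''$, put $\rho=v_p(d_\c'')>0$, and aim to show $\rho\le v_p(c_2)+v_p(c_3)$. As before, set $\alpha_i=v_p(A_i)$ for $1\le i\le 3$, $\alpha_4=v_p(A_4')$ and $\tau=v_p(T_\c)=\min_{r<s}(\alpha_r+\alpha_s)$.

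First we record the entries of $D_\c'/\delta_\c$. Since $c_1=0$, the matrix $D_\c'=D_\c/(c_4^2g_\c T_\c)$ has $(1,1)$ entry $\gamma_\c$ and no other entries in the first row or column. Its lower $2\times 2$ block is
\[
\frac{1}{T_\c}\begin{pmatrix} A_1A_3C_4+A_1A_4'c_3^2 & -c_2c_3A_1A_4'\\ -c_2c_3A_1A_4' & A_1A_2C_4+A_1A_4'c_2^2\end{pmatrix}.
\]
Every entry of this block is $A_1$ times an integer divided by $T_\c$. The point of dividing by $\delta_\c=\gcd(A_1,\gamma_\c)$ is that the $(1,1)$ entry becomes $\gamma_\c/\delta_\c$, which is coprime to $A_1/\delta_\c$. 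So we first check that $\delta_\c$ divides all entries of $D_\c'$. This follows once we know $T_\c$ divides $A_1A_3C_4$, $A_1A_4'c_3^2$ and the other inner products without consuming the $A_1$ factor; if it is needed, we argue $p$-adically.

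Now take $p\mid d_\c''$. Then $p$ divides $\gamma_\c/\delta_\c$, so $p\nmid A_1/\delta_\c$. We then compare valuations in the $(2,3)$ entry, which is $c_2c_3A_1A_4'/(T_\c\delta_\c)$, with valuation
\[
v_p(c_2c_3)+\alpha_1+\alpha_4-\tau-v_p(\delta_\c).
\]
The goal is to show that $\alpha_1+\alpha_4-\tau-v_p(\delta_\c)\le 0$, which forces $\rho\le v_p(c_2c_3)$.

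We argue by contradiction. Suppose $\alpha_1+\alpha_4>\tau+v_p(\delta_\c)$. Since $v_p(A_1/\delta_\c)=0$, we have $v_p(\delta_\c)=\alpha_1$, and so $\alpha_4>\tau\ge0$. Exactly as in Lemma~\ref{lem:cimidhi}, $\alpha_4>0$ gives $v_p(C_4)=0$. We then look at the diagonal entries $(A_1A_3C_4+A_1A_4'c_3^2)/(T_\c\delta_\c)$ and $(A_1A_2C_4+\dots)/(T_\c\delta_\c)$, together with $\gamma_\c$.
\begin{itemize}
\item If $\tau=\alpha_1+\alpha_3$, the first summand of the $(2,2)$ entry has valuation $0$ while the second has positive valuation. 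Hence $p$ does not divide that entry, a contradiction.
\item If $\tau=\alpha_1+\alpha_2$, the $(3,3)$ entry gives the same contradiction.
\item If $\tau=\alpha_2+\alpha_3$ is strictly smaller than $\alpha_1+\alpha_3$ and $\alpha_1+\alpha_2$, then $\gamma_\c$ has valuation $0$, because its term $A_2A_3C_4/T_\c$ is a unit and the others are divisible by $p$. This contradicts $p\mid\gamma_\c/\delta_\c$.
\item The remaining minima, namely $\alpha_k+\alpha_4$, are excluded because $\alpha_4>\tau$ would then be impossible: we would need $\alpha_1+\alpha_4=\tau$ or a similar equality to fail in a way that is handled by the cases above.
\end{itemize}

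The main obstacle is the bookkeeping around $\delta_\c$. Specifically, we must confirm that $\delta_\c$ divides the lower block, so that $d_\c''$ is well defined. We must also confirm that the equality $v_p(\delta_\c)=\alpha_1$ is legitimate, or else handle the case $p\mid A_1/\delta_\c$ directly. The case analysis on which pair attains $\tau$ must be carried out carefully.
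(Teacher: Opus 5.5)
Your valuation argument is essentially the paper's, and it works. For $p$ with $v_p(d_\c'')>0$ you correctly get $v_p(\delta_\c)=\alpha_1$. This step is legitimate: $v_p(\delta_\c)=\min\{\alpha_1,v_p(\gamma_\c)\}<v_p(\gamma_\c)$ forces the minimum to be $\alpha_1$. You then rule out $\alpha_4>\tau$ using $v_p(C_4)=0$ together with a diagonal entry or $\gamma_\c$, and finish with the off-diagonal entry.

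The paper treats the case $\alpha_4>\tau$ a little more uniformly. It picks $j\in\{2,3\}$ with $\alpha_j\le\tau$ and uses the corresponding diagonal entry, whereas you use $\gamma_\c$ when $\tau=\alpha_2+\alpha_3$; both routes work. Your last bullet is garbled, but all it needs to say is that $\alpha_k+\alpha_4\ge\alpha_4>\tau$, so $\tau$ is attained by a pair inside $\{1,2,3\}$.

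The one thing to fix is the step you flag as the main obstacle. You cannot show that $\delta_\c$ divides every entry of $D_\c'$, because it is false. Take $(A_1,A_2,A_3,A_4)=(5,1,5,5)$ and $\c=(0,1,1,2)$. Then $g_\c=1$, $T_\c=5$, and the nonzero entries of $D_\c'$ are $\gamma_\c=10$, $25$, $-5$ and $9$. So $\delta_\c=5$, but $5\nmid 9$.

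Hence $D_\c'/\delta_\c$ need not be integral, and $d_\c''$ must be read $p$-adically, as the paper does in its proof. If $m_p$ is the least $p$-adic valuation of the entries of $D_\c'$, then $v_p(d_\c'')=\max\{0,m_p-v_p(\delta_\c)\}$. Under this reading, $v_p(d_\c'')>0$ says exactly that every entry of $D_\c'$ has valuation exceeding $v_p(\delta_\c)$. That is all your argument uses, so simply drop the divisibility step rather than trying to prove it.

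A small slip in the same vein: when $\tau=\alpha_1+\alpha_3$, the first summand of the $(2,2)$ entry of $D_\c'/\delta_\c$ has valuation $-\alpha_1$, not $0$. The contradiction is unaffected, since the entry still has valuation at most $0$.
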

	
	\begin{proof}
	The nonzero entries of $D_\c'$ are the integers
	\[
	\gamma_\c,\qquad
	\frac{A_1(A_3C_4+A_4'c_3^2)}{T_{\c}},\qquad
	-\frac{A_1A_4'c_2c_3}{T_{\c}},\qquad
	\frac{A_1(A_2C_4+A_4'c_2^2)}{T_{\c}}.
	\]

	Fix a prime $p$, and write
	\[
	\alpha_i=v_p(A_i)\quad (1\leq i\leq 3),\qquad
	\alpha_4=v_p(A_4'),\qquad
	\tau=v_p(T_{\c}),\qquad
	\sigma=v_p(\gamma_\c).
	\]
	Since
$
	\gcd(A_1,A_2,A_3,A_4')=1$,
	at least one of the four numbers
	$\alpha_1,\alpha_2,\alpha_3,\alpha_4$ is zero. Moreover, we clearly have 
	\[
	\tau=\min_{1\leq i<j\leq 4}(\alpha_i+\alpha_j), \qquad 
	v_p(\delta_{\c})=\min\{\alpha_1,\sigma\}.
	\]

	Let
	$
	m_p=v_p(d_{\c})-v_p(c_4^2g_{\c}T_{\c})$ be the $p$-adic valuation of the gcd of the
	four entries displayed above.
	If $\sigma\leq\alpha_1$, then the first entry gives
	$
	m_p\leq \sigma=v_p(\delta_{\c})$.
If $\sigma>\alpha_1$, then
$	v_p(\delta_{\c})=\alpha_1$.
	If $\alpha_4\leq\tau$, the off-diagonal entry gives
	\[
	\begin{aligned}
	m_p
	\leq
	v_p\left(\frac{A_1A_4'c_2c_3}{T_{\c}}\right)
	&=\alpha_1+\alpha_4+v_p(c_2c_3)-\tau\\
	&\leq\alpha_1+v_p(c_2c_3).
	\end{aligned}
	\]
If $\alpha_4>\tau$, then $v_p(C_4)=0$, since $A_4'$ is coprime to $c_4^2/g_{\c}$.
Furthermore, there is an index $j\in\{2,3\}$ such that 
$	\alpha_j\leq\tau$.
	For this $j$, the two summands in $A_jC_4+A_4'c_j^2$
	have valuations
	$\alpha_j$ and $\alpha_4+2v_p(c_j)>\tau\geq\alpha_j$,
	respectively. Hence
	$
	v_p(A_jC_4+A_4'c_j^2)=\alpha_j$ and so the corresponding diagonal entry has valuation
	\[
	\alpha_1+\alpha_j-\tau\leq\alpha_1.
	\]
	Thus $m_p\leq\alpha_1=v_p(\delta_{\c})$ in this case.

	We have proved in every case that
	$
	m_p\leq v_p(\delta_{\c})+v_p(c_2c_3)$.
	Using the definition of $d_{\c}''$, it follows that
	\[
	\begin{aligned}
	v_p(d_{\c}'')
	&=\max\{0,m_p-v_p(\delta_{\c})\}\leq v_p(c_2c_3),
	\end{aligned}
	\]
	as required.
\end{proof}

For $C,g,R,h\in \ZZ_{\geq 1}$, we put
	\begin{align*}
		M_0(C,g,R,h)=
		\left\{\c\in \ZZ_{\prim}^4:
		\begin{array}{l}
		c_1=0,~\abs{\c}=\abs{c_4}\sim C,~g_{\c}=g, ~d_\c''=h, \\
				|Q^{*}(\c)|\sim R, ~c_2c_3\neq 0
				\end{array}\right\}.
	\end{align*}
The following result upper bounds this quantity.

	\begin{lemma}\label{lem:cardM0}
		Let $C,g,R,h\in \ZZ_{\geq 1}$. For all $\ve>0$, we have
		\[
		\#	M_0(C,g,R,h)\ll_{\ve} (RC)^{\ve}\left(\frac{C^2R^{\frac{1}{2}} \|Q\|^{\frac{1}{2}}}{g^{\frac{1}{2}} \Delta_Q^{\frac{1}{2}}h}+\frac{C^{\frac 32}(g^{\square})^{\frac{1}{4}}}{g^{\frac{1}{2}}h^{\frac{1}{2}}}\right).
		\]
	\end{lemma}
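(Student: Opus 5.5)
The plan mirrors the proof of Lemma~\ref{lem:cardM}, with the ternary case of Corollary~\ref{lem:NRX} replacing the quaternary case. First I split $h$ according to Lemma~\ref{lem:goat}. Since $d_\c''=h$ divides $c_2c_3$, we may write $h=h_2h_3$ with $h_2=\gcd(h,c_2)\mid c_2$ and $h_3=h/h_2\mid c_3$. There are $O_\ve(C^\ve)$ such factorisations.

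For $c_4$ I use the observation already made in the proof of Lemma~\ref{lem:cimidhi}: since $g=\gcd(A_4,c_4^2)$ divides $c_4^2$, we have $h_4\mid c_4$ with $h_4=g/\sqrt{g^{\square}}$. Writing $c_k=h_kc_k'$ for $k=2,3,4$, and using $c_1=0$, the dual form becomes
\[
Q^*(\c)=B_2c_2'^2+B_3c_3'^2+B_4c_4'^2,\qquad B_i=\frac{A_1A_2A_3A_4h_i^2}{A_i}.
\]
Each $B_i$ is nonzero. Consequently $\#M_0(C,g,R,h)$ is at most $O_\ve(C^\ve)$ times the number of $\c'\in(\ZZ_{\neq0})^3$ satisfying $|\sum_i B_ic_i'^2|\le 2R$ and $|c_i'|\le X_i:=2C/h_i$.

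Next I apply the case $n=3$ of Corollary~\ref{lem:NRX}. Since $X_2X_3X_4\ll C^3/(h\,g/\sqrt{g^\square})$, the second term of the corollary is
\[
R^\ve (X_2X_3X_4)^{\frac12+\ve}\ll (RC)^\ve\frac{C^{\frac32}(g^\square)^{\frac14}}{h^{\frac12}g^{\frac12}},
\]
which is exactly the second term in the statement.

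For the first term, let $j$ be the index minimising $|B_j|$. Then
\[
(X_2X_3X_4)\Bigl(\frac{R}{X_j^2|B_j|}\Bigr)^{\frac12}=\frac{X_2X_3X_4}{X_j}\Bigl(\frac{R}{|B_j|}\Bigr)^{\frac12}.
\]
Substituting $X_j=2C/h_j$ and $|B_j|=\Delta_Qh_j^2/|A_j|$, the factors of $h_j$ cancel. The expression reduces to $C^2$ times $(R|A_j|/\Delta_Q)^{1/2}$ times the product of $1/h_i$ over the remaining $i\neq j$. Since $|A_j|\le\|Q\|$, this gives the main term $C^2R^{1/2}\|Q\|^{1/2}/\Delta_Q^{1/2}$, multiplied by a product of the reciprocal factors $1/h_2$, $1/h_3$, $\sqrt{g^\square}/g$, with one of them omitted.

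This last step is where I expect the only real care to be needed. To reach the stated factor $1/(g^{1/2}h)$, I need the omitted factor not to destroy the saving.

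\emph{Case $j=4$.} The remaining factor is $1/(h_2h_3)=1/h$, and I must also recover $g^{-1/2}$. Here I would instead bound $|B_4|=\Delta_Qh_4^2/|A_4|$ using $h_4^2=g^2/g^\square\ge g$. Then $(R/|B_4|)^{1/2}\le (R|A_4|/(\Delta_Q g))^{1/2}$, which supplies $g^{-1/2}$.

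\emph{Case $j\in\{2,3\}$.} The remaining factors are $h_4^{-1}$ and $h_{5-j}^{-1}$. We have $h_4^{-1}=\sqrt{g^\square}/g\le g^{-1/2}$, since $g^\square\le g$, but we lose $h_j$. I would handle this by not dividing by $h_j$ in the first place, i.e. not substituting for $c_j$. Since the minimising index does not depend on this choice and the bound is monotone, one can redo the count with $X_j=2C$ and $B_j=\Delta_Q/A_j$, which gives $C^3/X_j$ directly.

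Failing that, I would use the crude bound of Corollary~\ref{lem:NRX}'s proof directly: fix the two coordinates other than $c_j'$ in $O(C^2/(h\,h_4/h_j\cdots))$ ways and solve for $c_j'$ in $O(\sqrt{R/|B_j|})$ ways. Choosing which variable to solve for as the one carrying the largest $|B_i|/h_i^2$-normalised saving should yield exactly $C^2R^{1/2}\|Q\|^{1/2}/(g^{1/2}\Delta_Q^{1/2}h)$, as in the final display of the proof of Lemma~\ref{lem:cardM}. Summing over the $O_\ve(C^\ve)$ factorisations of $h$ then completes the bound.
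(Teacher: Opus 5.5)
Your route is the paper's: split $h=h_2h_3$ via Lemma~\ref{lem:goat}, remove $h_4=g/\sqrt{g^{\square}}$ from $c_4$, and apply the case $n=3$ of Corollary~\ref{lem:NRX}. Your second term is correct. Keeping $A_1$ inside the $B_i$ with right-hand side $2R$, where the paper factors it out and uses $2R/\abs{A_1}$, is harmless, since only $R/\abs{B_j}$ enters. The problem is your evaluation of the first term, and the workaround you build on it.

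The factor $h_j$ does not drop out of the bound. In your form $\bigl(\prod_{i\neq j}X_i\bigr)(R/\abs{B_j})^{1/2}$ the product omits $h_j$, but $(R/\abs{B_j})^{1/2}=(R\abs{A_j}/\Delta_Q)^{1/2}/h_j$ puts it back. Equivalently, the $h_j^2$ cancels inside $X_j^2\abs{B_j}$, leaving the full product $X_2X_3X_4$. So for every $j$,
\[
X_2X_3X_4\Bigl(\frac{2R}{X_j^2\abs{B_j}}\Bigr)^{\frac12}
=\frac{8C^3}{h_2h_3h_4}\Bigl(\frac{R\abs{A_j}}{2C^2\Delta_Q}\Bigr)^{\frac12}
\ll\frac{C^2(g^{\square})^{\frac12}}{hg}\Bigl(\frac{R\|Q\|}{\Delta_Q}\Bigr)^{\frac12}
\leq\frac{C^2R^{\frac12}\|Q\|^{\frac12}}{g^{\frac12}h\,\Delta_Q^{\frac12}},
\]
exactly as in the paper. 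No case distinction is needed. Your ``Case $j=4$'' computation is precisely this one, and it works verbatim for $j=2,3$.

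Your proposed remedy for $j\in\{2,3\}$ is wrong. Leaving $c_j$ unsubstituted gives $X_j=2C$ and $\abs{B_j}=\Delta_Q/\abs{A_j}$. The first term is then of order $C^2(R\abs{A_j}/\Delta_Q)^{1/2}/(h_{5-j}h_4)$, which genuinely loses $h_j$ and falls short of the stated bound.

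Your fallback (fix the two other coordinates, then solve for $c_j'$) is just the proof of Corollary~\ref{lem:NRX}. It succeeds, but only because the $O(\sqrt{R/\abs{B_j}})$ choices for $c_j'$ already carry the factor $1/h_j$, not through any choice of which variable to solve for. Once this slip is corrected, your argument coincides with the paper's proof.
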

	
	\begin{proof}
We follow the proof of Lemma~\ref{lem:cardM}, invoking 
Lemma~\ref{lem:goat} to deduce that 
		\[
		\#M_0(C,g,R,h)\leq \sum_{h_2h_3=h} \#M_0(C,g,R,h_2,h_3),
		\]
where 
		$M_0(C,g,R,h_2,h_3)$ is the set of $\c\in M_0(C,g,R,h)$ such that $h_i\mid c_i$, for $i=2,3$.
				Set $h_4=g/\sqrt{g^{\square}}$. 
		It follows from Lemma~\ref{lem:cimidhi} that $h_4\mid c_4$. Hence we can define integers $c_k'=c_k/h_k$, for $2\leq k\leq 4$.
		Putting $B_i=A_2A_3A_4h_i^2/A_i$, we see that 
		\[
		Q^*(\c)=A_1(B_2c_2'^2+B_3c_3'^2+B_4c_4'^2),
		\]
whence
		\[
		\#M_0(C,g,R,h_2,h_3)\leq \#\left\{{\c'}=(c_2',c_3',c_4')\in(\ZZ_{\neq 0})^3:\abs{\sum_{i=2}^4 B_ic_i'^2}\leq \frac{2R}{|A_1|}, |c_i'|\leq \frac{2C}{h_i}\right\}.
		\]
		Let $j$ be such that $\abs{B_j}=\min_i\{\abs{B_i}\}$. Then it follows from 
Corollary~\ref{lem:NRX} that 
		\begin{align*}
			\#M_0(C,g,R,h_2,h_3)
&\ll_\ve \frac{C^{\frac{3}{2}+\ve}R^{\ve}(g^\square)^{\frac{1}{4}}}{h^{\frac{1}{2}}g^{\frac{1}{2}}}+
\frac{C^{3}(g^\square)^{\frac{1}{2}}}{h g} \left(\frac{Rh_j^2}{|A_1B_j|C^2}\right)^{\frac 12}\\
			&\ll_\ve 
			\frac{C^{\frac{3}{2}+\ve}R^{\ve}(g^{\square})^{\frac{1}{4}}}{h^{\frac 12}g^{\frac 12}}
			+\frac{C^2}{hg^{\frac 12}} \left(\frac{R\|Q\|}{\Delta_Q}\right)^{\frac 12},
		\end{align*}
		which is enough to complete the proof of the lemma.
	\end{proof}

Recall the definition 
\eqref{eq:mule}	 of $N(q_{\c},B)$ and the allied quantity $N^0(q_{\c},B)$, in which points on $\QQ$-lines are deleted. We may now record the following result, which 
is deduced from Lemma~\ref{lem:cardM0} in exactly the same way that 
Lemma~\ref{lem:contM1} is deduced from 
Lemma~\ref{lem:countM1}.
	
	\begin{lemma}\label{lem:contM0}
		Let $C,g,R,h\in \ZZ_{\geq 1}$ and let $\ve>0$. Then 
		\begin{align*}
			\sum_{\c\in M_0(C,g,R,h)}N^0(q_{\c},B)\ll_{\ve}
			(C\|Q\|)^{\ve}\left(\#M_0(C,g,R,h)+BT^{\frac{1}{2}}
			E_0(C,Q)\right)
\end{align*}
where 
\[
E_0(C,Q)=
			\frac{C^2\|Q\|}{m_Q^{\frac{1}{6}}\Delta_Q^{\frac{1}{3}}}+\frac{C^{\frac 12}\min\left\{\|Q\|^{\frac{1}{4}}, C^{\frac 12}
				\right\}\|Q\|^{\frac{3}{2}}}{\Delta_Q^{\frac{1}{2}}}.
\]
	\end{lemma}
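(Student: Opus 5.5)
The plan is to run the argument of Lemma~\ref{lem:contM1} again, with two modifications. The pointwise bound of Lemma~\ref{lem:countM1} has to be re-derived with the correct gcd for the present $D_\c$. The cardinality input becomes Lemma~\ref{lem:cardM0} in place of Lemma~\ref{lem:cardM}.

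\emph{Step 1: pointwise bound.} For $\c\in M_0(C,g,R,h)$ the gcd of the entries of $D_\c$ factorises as
\[
d_\c=c_4^2\,g\,T_\c\,\delta_\c\, d_\c'' .
\]
I will reuse the argument of Lemma~\ref{lem:lattices}, namely \cite[Lemma 2.6]{BrowningHeathBrown} with $\det M_\c=c_4^4Q^*(\c)$ and $T_\c\le T$, but keep $d_\c$ in this form. This should give lattices of determinant
\[
\gg \frac{|c_4Q^*(\c)|}{(gT\delta_\c h)^{3/2}} .
\]
Combining this with Lemmas~\ref{lem:ellips} and~\ref{lem:latell} and \cite[Lemma 2.5]{BrowningHeathBrown}, exactly as in the proof of Lemma~\ref{lem:countM1}, I expect
\[
N^0(q_\c,B)\ll (C\|Q\|)^{\ve}\Bigl(1+B\min\Bigl(1,\frac{R^{1/3}\|Q\|}{C^{2/3}\Delta_Q^{1/2}}\Bigr)\frac{g^{1/2}T^{1/2}\delta_\c^{1/2}h^{1/2}}{C^{1/3}R^{1/3}}\Bigr).
\]
Since $\delta_\c\mid A_1$, we have $\delta_\c^{1/2}\le \|Q\|^{1/2}$. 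This extra factor $\|Q\|^{1/2}$ is the only genuinely new feature compared with Section~\ref{sec:no}. It explains why $\|Q\|$ carries higher powers in $E_0$ than in $E_1$.

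\emph{Step 2: summing over $\c$.} I will sum the Step 1 bound over $\c\in M_0(C,g,R,h)$. The constant term contributes $\#M_0(C,g,R,h)$. For the other term I use Lemma~\ref{lem:cardM0} and split it into its two summands.

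The first summand of Lemma~\ref{lem:cardM0} is $C^2R^{1/2}\|Q\|^{1/2}/(g^{1/2}\Delta_Q^{1/2}h)$. Multiplying by the Step 1 bound with the minimum replaced by $1$, the factors $g^{1/2}$ cancel and $h^{1/2}/h\le1$. This leaves
\[
B T^{1/2}\|Q\|\,\frac{C^{5/3}R^{1/6}}{\Delta_Q^{1/2}} .
\]
Non-emptiness of $M_0(C,g,R,h)$ forces $R\ll C^2\Delta_Q/m_Q$. Hence $R^{1/6}\ll C^{1/3}\Delta_Q^{1/6}/m_Q^{1/6}$, and this summand contributes $BT^{1/2}C^2\|Q\|/(m_Q^{1/6}\Delta_Q^{1/3})$.

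For the second summand of Lemma~\ref{lem:cardM0} I use $g^{\square}\le g\ll\min\{\|Q\|,C^2\}$, so that $(g^{\square})^{1/4}\le \min\{\|Q\|^{1/4},C^{1/2}\}$. The factors $g^{1/2}$ and $h^{1/2}$ then cancel against the pointwise bound. Here I take the second branch of the minimum, which makes the $R^{1/3}$ cancel via
\[
\frac{R^{1/3}\|Q\|}{C^{2/3}\Delta_Q^{1/2}}\cdot \frac{1}{C^{1/3}R^{1/3}}=\frac{\|Q\|}{C\Delta_Q^{1/2}} .
\]
Together with $\delta_\c^{1/2}\le\|Q\|^{1/2}$, this summand contributes
\[
BT^{1/2}\,\frac{C^{1/2}\min\{\|Q\|^{1/4},C^{1/2}\}\|Q\|^{3/2}}{\Delta_Q^{1/2}} .
\]
The two contributions are exactly the two terms of $E_0(C,Q)$.

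The main point of care is Step 1. Lemma~\ref{lem:lattices} as stated involves $d_\c'$, which is the wrong quantity in this section, so I must go back to its proof with the factorisation of $d_\c$ given above. I also need to check that the bound on $\delta_\c$ costs no more than $\|Q\|^{1/2}$. Once this is in place, the rest is bookkeeping identical to the proof of Lemma~\ref{lem:contM1}.
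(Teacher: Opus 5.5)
Your proposal is correct and is essentially the paper's argument, which the paper only sketches. You take the pointwise bound of Lemma~\ref{lem:countM1} with $d_\c'$ replaced by $\delta_\c d_\c''\le \|Q\|h$, which is the source of the extra $\|Q\|^{1/2}$ in $E_0$, and sum it via Lemma~\ref{lem:cardM0} exactly as in Lemma~\ref{lem:contM1}. The only slip is claiming $d_\c=c_4^2gT_\c\delta_\c d_\c''$ as an equality: since $v_p(d_\c'')=\max\{0,m_p-v_p(\delta_\c)\}$, $\delta_\c$ need not divide $d_\c'$, but you only use $d_\c\le c_4^2gT_\c\delta_\c d_\c''$, which holds and also lets you apply Lemma~\ref{lem:lattices} verbatim via $d_\c'\le \delta_\c d_\c''$.
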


	\subsection{At least two vanishing coordinates}
	
		We focus now on the contribution from vectors $\c=(c_1,c_2,c_3,c_4)$ with at least two coefficients equal to zero, letting 
	\[
	M'(C)=\{\c\in\ZZ_{\prim}^4: \abs{\c}=|c_4|\leq C, ~c_1=c_2=0\},
	\] 
for any $C\geq 1$. Note that $\#M'(C)$ has exact order $C^2$.
	
	\begin{lemma}\label{lem:contM'}
Let $C\in \ZZ_{\geq 1}$ and let $\ve>0$. Then 
		\[
		\sum_{\c\in M'(C)}N^0(q_{\c},B)\ll_{\ve} 
		\left(C^2+BC\right)B^\ve.
		\]
	\end{lemma}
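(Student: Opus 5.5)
For $\c\in M'(C)$ we have $c_1=c_2=0$, so the plane $\c\cdot\x=0$ reads $c_3x_3+c_4x_4=0$. Since $\gcd(c_3,c_4)=1$ (primitivity) and $c_4\neq 0$, any integer point on it has $x_3=c_4t$ and $x_4=-c_3t$ for some $t\in\ZZ$. Moreover $|t|\le B/|c_4|$ whenever $|\x|\le B$. Substituting into $Q$ turns the conic $q_\c=0$ into the diagonal ternary equation
\[
A_1x_1^2+A_2x_2^2+(A_3c_4^2+A_4c_3^2)t^2=0,
\]
to be counted with $|x_1|,|x_2|\le B$ and $|t|\le B/|c_4|$. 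The plan is to bound these points directly, without going through the lattice and ellipsoid machinery, because the structure here is so explicit.

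\textbf{Step 1: the case $t=0$.} Then $A_1x_1^2+A_2x_2^2=0$. For a nonsingular conic this has no nonzero solution unless $-A_1A_2$ is a square, in which case the solutions lie on a line. Either way this contributes $O(1)$ primitive points per $\c$, or nothing to $N^0$ since those points lie on $\QQ$-lines. Summed over $\c$ this gives $O(\#M'(C))=O(C^2)$. The same remark covers degenerate $\c$, where $q_\c$ is singular and the argument in Lemma~\ref{lem:countM1} gives $O(1)$ points.

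\textbf{Step 2: the case $t\neq 0$.} Fix $t$ with $0<|t|\le B/|c_4|$ and set $N=-(A_3c_4^2+A_4c_3^2)t^2$. We need to count $(x_1,x_2)$ with $A_1x_1^2+A_2x_2^2=N$ and $|x_i|\le B$.
\begin{itemize}
\item If $N\neq 0$ and $-A_1A_2$ is not a square, the number of solutions is $O_\ve((B\|Q\|)^\ve)$. This follows from the standard divisor-type bound for representations by a binary quadratic form in the imaginary quadratic or real quadratic setting, with units controlled by the box $|x_i|\le B$, which contributes only a $\log$ factor.
\item If $-A_1A_2$ is a square, the form factors into two linear forms over $\QQ$, and the divisor bound on $N$ again gives $O_\ve(B^\ve)$ solutions, using that all quantities are polynomially bounded in $B$.
\item If $N=0$, then since $A_3c_4^2+A_4c_3^2=0$ the conic is singular, which was handled in Step 1.
\end{itemize}
So each $t$ contributes $O(B^\ve)$, and each $\c$ contributes $O(B^\ve(1+B/|c_4|))$.

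\textbf{Step 3: summing over $\c$.} There are $O(|c_4|)$ admissible pairs $(c_3,c_4)$ for each $c_4$, so
\[
\sum_{\c\in M'(C)}N^0(q_\c,B)\ll B^\ve\sum_{1\le |c_4|\le C}|c_4|\left(1+\frac{B}{|c_4|}\right)\ll B^\ve(C^2+BC).
\]

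\textbf{Main obstacle.} The delicate point is making the $\ve$-loss depend only on $B$, as the statement claims, rather than on $\|Q\|$ or $C$. I would first reduce to $C\le B$ (otherwise $|t|<1$ forces $t=0$) and to coefficients bounded polynomially in $B$. If that reduction fails, I would accept the loss $(B\|Q\|C)^\ve$, which is harmless in later applications. I would also need to check the uniformity of the binary-form representation bound, including the unit contributions in the real quadratic case.
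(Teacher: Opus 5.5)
Your proposal is correct and follows essentially the same route as the paper: use $c_4\mid x_3$ (your parametrisation $x_3=c_4t$, $x_4=-c_3t$) to get $O(1+B/|c_4|)$ choices for $(x_3,x_4)$, bound the number of $(x_1,x_2)$ for each by $O(B^{\ve})$ via divisor-type bounds for binary quadratic forms, and then sum over $|c_3|\le|c_4|\le C$. The only cosmetic difference is that the paper treats $\c=(0,0,0,\pm 1)$ separately with a standard $O(B)$ bound for conics, whereas you absorb it into the same parametrisation. Your remark that the $B^{\ve}$ really requires $\|Q\|$ to be polynomially bounded in $B$ is apt; this is harmless, since $\|Q\|\ll B^{20}$ holds where the lemma is applied.
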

	
	\begin{proof}
		Assume that $c_3=0$. Then $\c=(0,0,0,\pm 1)$ and 
				$\c\cdot \x=0$ implies $x_4 = 0$. Therefore
		\[
		N^0(q_\c,B) = \#\{\x \in \ZZ^3_{\prim}: A_1x_1^2 + A_2x_2^2 + A_3x_3^2 = 0, |\x|\leq B\}=O(B),
		\]
by standard estimates for counting rational points of bounded height on conics.
This case therefore makes the overall contribution $O(B)$.
		Assume $c_3\neq 0$. Then $\c\cdot \x=0$ implies $c_4\mid x_3$. Therefore, there are $O(B/|c_4|+1)$ choices for $x_3$, after which $x_4$ is uniquely determined. Now for fixed $x_3, x_4$, the number of $x_1, x_2$ such that $Q(\x) = \x \cdot \c = 0$ 
		that can possibly contribute to $N^0(q_{\c},B)$ 
		is $O_{\ve}(B^{\eps})$. Hence \[N^0(q_{\c},B)\ll_{\ve} \frac{B^{1+\eps}}{|c_4|}+B^{\eps}.
		\] 
		Taking a sum over $|c_3|,|c_4| \leq C$, we obtain a contribution of $O_{\ve}(
		(C^2+BC)B^\ve)$.
	\end{proof}

\subsection{Combining all the estimates}

	Let $C,g,R,h\in \ZZ_{\geq 1}$. Recall that $g_\c$ was defined at the beginning of Section~\ref{sec:quad}, that $d_\c'$ was defined in Section~\ref{sec:no}, and that 
	$d_\c''$ was defined in Section~\ref{sec:one}. 
We take 
\begin{equation}\label{eq:cow}
\tilde d_\c=\begin{cases}
d_\c' & \text{ if $c_1c_2c_3c_4\neq 0$,}\\
d_\c'' & \text{ if $c_1=0$ and $c_2c_3c_4\neq 0$,}\\
1 & \text{ otherwise.}
\end{cases}
\end{equation}
Let
	\begin{equation}\label{eq:sheep}
		M(C,g,R,h)=\left\{\c\in \ZZ_{\prim}^4:\abs{\c}\sim C,~g_{\c}=g,~\tilde d_\c=h,~|Q^*(\c)|\sim R\right\}.
	\end{equation}
Then we are now ready to record the final upper bound for the average of 
$N^0(q_{\c},B)$ over $\c\in M(C,g,R,h)$, where 
$N^0(q_{\c},B)$ is defined immediately after \eqref{eq:mule}.
	
	\begin{lemma}\label{lem:finalcontr}
		Let $C,g,R,h\in \ZZ_{\geq 1}$ and let $\ve>0$. Then 
		\begin{align*}
			&\sum_{\c\in M(C,g,R,h)}N^0(q_{\c},B)\ll_{\ve}
 (BC\|Q\|)^{\ve}\left(\#M(C,g,R,h) + C^2+BC+
 BT^{\frac{1}{2}}E(C,Q) \right),
 \end{align*}
where
\[
E(C,Q)=\frac{C^2\|Q\|}{m_Q^{\frac{1}{6}}\Delta_Q^{\frac{1}{3}}}+\frac{C^{\frac 12}\min\left\{\|Q\|^{\frac{1}{4}}, C^{\frac 12}
				\right\}\|Q\|^{\frac{3}{2}}}{\Delta_Q^{\frac{1}{2}}}+
\frac{C^3\|Q\|^{\frac{1}{2}}}{m_Q^{\frac{1}{6}}\Delta_Q^{\frac{1}{3}}}.
\]
\end{lemma}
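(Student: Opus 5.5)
The plan is to split $M(C,g,R,h)$ according to how many coordinates of $\c$ vanish, and to reduce each piece to one of Lemmas~\ref{lem:contM1}, \ref{lem:contM0} and~\ref{lem:contM'} by symmetry. The first step is to note that everything in the setup is invariant under simultaneous permutations of the indices of $\c$ and of the coefficients $A_1,\dots,A_4$. The quantities $\Delta_Q$, $\|Q\|$, $m_Q$, $T$ are symmetric, and $N^0(q_\c,B)$ counts points of $Q=\c\cdot\x=0$ off rational lines. Relabelling therefore lets us assume $|\c|=|c_4|$, and, in the case of vanishing coordinates, that the vanishing ones come first. Because the maximum may be attained at several indices, I fix a rule (say the largest index attaining $|\c|$) so that each $\c$ lies in exactly one class. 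The number of permutations is $O(1)$, so this costs only a constant.

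Next I split into three pieces.
\begin{itemize}
\item $c_1c_2c_3c_4\neq 0$. After permuting, $\tilde d_\c=d_\c'$, and these $\c$ form a subset of a set $M_1(C,g,R,h)$ for the permuted form $Q_\sigma$. Lemma~\ref{lem:contM1} gives a bound of $\#M_1$ plus $BT^{1/2}E_1(C,Q)$, where I use that $\#M_1$ for $Q_\sigma$ counts exactly the permuted copy of this part of $M(C,g,R,h)$.
\item Exactly one zero coordinate. Since $|\c|\neq 0$, the maximum is not at the zero index, so we may permute so that $c_1=0$ and $|\c|=|c_4|$. Here $\tilde d_\c=d_\c''$, and Lemma~\ref{lem:contM0} gives $\#M_0+BT^{1/2}E_0(C,Q)$.
\item At least two zero coordinates. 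After permuting, $c_1=c_2=0$ and $|\c|=|c_4|\le 2C$. Here I drop the conditions on $g,R,h$ and bound by the full sum over $M'(2C)$. Lemma~\ref{lem:contM'} gives $O((C^2+BC)B^\ve)$.
\end{itemize}

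Summing the three pieces, $\#M_1$ and $\#M_0$ are each at most $\#M(C,g,R,h)$, and $E_1+E_0\le E(C,Q)$ term by term. The middle term of $E$ appears with $\|Q\|^{3/2}$ from $E_0$, which dominates the $\|Q\|$ version in $E_1$ because it has the larger power of $\|Q\|$ and the smaller power of $C$ only after the comparison; to be safe I simply keep all terms. The only one not literally listed in $E$ is $C\min\{\|Q\|^{1/4},C^{1/2}\}\|Q\|\Delta_Q^{-1/2}$ from $E_1$. I bound it by the middle term of $E$ when $C\le\|Q\|$, and by the last term $C^3\|Q\|^{1/2}m_Q^{-1/6}\Delta_Q^{-1/3}$ otherwise, using $\Delta_Q^{1/2}\ge\Delta_Q^{1/3}m_Q^{2/3}$ and $C\ge 1$.

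The main obstacle I expect is bookkeeping rather than anything substantive. First, the invariants $g_\c$, $T_\c$ and $d_\c'$ or $d_\c''$ must transform correctly under the permutation: $g_\c$ is defined via the coordinate attaining the maximum, so the permutation has to send that coordinate to position $4$. Second, the dyadic condition $|\c|\sim C$ has to match $|c_4|\sim C$. Third, $E_1$ has to be checked to be absorbed into $E$, which is the elementary comparison of the previous paragraph; if it fails in some range, I would add that term explicitly, but the $\|Q\|\ge 1$ and $\Delta_Q\ge m_Q^4$ inequalities should suffice.
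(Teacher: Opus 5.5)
Your proposal is correct and follows the paper's own argument: order the coordinates so that $|c_1|\le|c_2|\le|c_3|\le|c_4|$ (equivalently, maximum at position $4$ and zeros first), apply Lemmas~\ref{lem:contM1}, \ref{lem:contM0} and~\ref{lem:contM'} according to how many coordinates vanish, and absorb the one stray term of $E_1$. The paper absorbs $C\min\{\|Q\|^{1/4},C^{1/2}\}\|Q\|\Delta_Q^{-1/2}$ into the first term $C^2\|Q\|m_Q^{-1/6}\Delta_Q^{-1/3}$ of $E$ in one step, using $\min\{\|Q\|^{1/4},C^{1/2}\}\le C$ and $m_Q\le\Delta_Q$, which avoids your case split; your split also works, but in the case $C>\|Q\|$ it is that hypothesis, not merely $C\ge 1$, that makes the comparison with the last term of $E$ go through.
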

	
	\begin{proof}
		Up to rearranging the indices, we can assume $\abs{c_1}\leq \abs{c_2}\leq \abs{c_3}\leq \abs{c_4}$. If $c_1\neq 0$, we use Lemma~\ref{lem:contM1}. If $c_1=0$ and $c_2\neq 0$, we use Lemma~\ref{lem:contM0}. If $c_1=c_2=0$, we use Lemma~\ref{lem:contM'}. 
Recalling that $\#M'(C)$ has exact order $C^2$, 
this leads to the bound
\begin{align*}
\sum_{\c\in M(C,g,R,h)}N^0(q_{\c},B)&\ll_{\ve}
 (BC\|Q\|)^{\ve}\left(\#M(C,g,R,h) + C^2+BC+
 BT^{\frac{1}{2}}\tilde E(C,Q) \right),
 \end{align*}
 where 
 \begin{align*}
\tilde E(C,Q)
&=E_0(C,Q)+E_1(C,Q)\ll E(C,Q),
\end{align*}
since 
				\[
		\frac{C\min\left\{\|Q\|^{\frac 14},C^{\frac 12}\right\}\|Q\|}
		{\Delta_Q^{\frac{1}{2}}}
		\leq \frac{C^2\|Q\|}{m_Q^{\frac{1}{6}}\Delta_Q^{\frac{1}{3}}}. \qedhere
		\]
	\end{proof}
	\section{Uniform bounds for quadrics: conclusion}\label{sec:quad''}
	In this section we shall deduce Theorem~\ref{thm:main}. Recall the definition \eqref{eq:ham} of 
	$N(Q,B)$ and $N^0(Q,B)$. 
	If $\|Q\|\ll B^{20}$ does not hold, then~\cite[Lemma 5]{BHBS-duke} yields $N(Q,B)\ll B$, which is satisfactory. Thus, from now on, we can assume $\|Q\|\ll B^{20}$.
Let $\c\in \ZZp^4$ and recall the definition of the ternary quadratic form $q_\c$ from the start of Section~\ref{sec:quad}. For given $d\in \ZZ_{\geq 1}$, we define 
\[
	N_d^0(q_\c,B)=\#\left\{\x\in \ZZ^3:\gcd(x_1,x_2,x_3)=d, ~q_\c(\x)=0,~\abs{\x}\leq B\right\},
\]
		if $q_\c$ is irreducible over $\QQ$, with $N_d^0(q_\c,B)=0$ if $q_\c$ factors over $\QQ$.
	
	\begin{lemma}\label{lem:finalcontrd}
		Let $C,g,R,h\in \ZZ_{\geq 1}$ and let $\ve>0$. Assume that $\|Q\|\ll B^{20}$ and $C\ll B^{\frac 13}$. Then 
\begin{align*}
\sum_{d\mid A_4}
\sum_{\c\in M(C,g,R,h)}N_d^0(q_{\c},B)\ll_{\ve}
 B^{\ve}\left(B^{\frac{4}{3}} +
T^{\frac{1}{2}}F(B,Q) \right),
 \end{align*}
 		where $M(C,g,R,h)$ is given by \eqref{eq:sheep} and 
 \begin{equation}\label{eq:bull}
 F(B,Q)=
 \frac{B^{\frac 76}\min\left\{\|Q\|^{\frac{1}{4}}, B^{\frac 16}
				\right\}\|Q\|^{\frac{3}{2}}}{\Delta_Q^{\frac{1}{2}}}+
 \frac{B^{\frac{5}{3}}\|Q\|}{m_Q^{\frac{1}{6}}\Delta_Q^{\frac{1}{3}}}+
 \frac{B^2\|Q\|^{\frac{1}{2}}}{m_Q^{\frac{1}{6}}\Delta_Q^{\frac{1}{3}}}.
\end{equation}
	\end{lemma}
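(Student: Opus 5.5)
The plan is to reduce $N_d^0(q_\c,B)$ to a primitive count and then feed it into Lemma~\ref{lem:finalcontr}. If $\x$ satisfies $\gcd(x_1,x_2,x_3)=d$, $q_\c(\x)=0$ and $|\x|\le B$, then $\x/d\in\ZZp^3$ is a zero of $q_\c$ of height at most $B/d$. Hence, when $q_\c$ is irreducible over $\QQ$, we have $N_d^0(q_\c,B)\le N^0(q_\c,B/d)$. When $q_\c$ factors, the quantity is zero by definition.

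Next, apply Lemma~\ref{lem:finalcontr} with $B$ replaced by $B/d$. Since $d\mid A_4$ and $\|Q\|\ll B^{20}$, the divisor bound gives $O_\ve(B^\ve)$ choices of $d$. It is therefore enough to bound a single $d$, and we may take $d=1$, since every term is nondecreasing in $B$. Using $C\ll B^{1/3}$ and $\|Q\|\ll B^{20}$, the factor $(BC\|Q\|)^\ve$ becomes $B^{O(\ve)}$. We then bound the remaining terms one by one:
\begin{itemize}
\item $C^2+BC\ll B^{4/3}$.
\item Each $\c\in M(C,g,R,h)$ has $|\c|\ll C$, so $\#M(C,g,R,h)\ll C^4\ll B^{4/3}$.
\item For $BT^{1/2}E(C,Q)$, substitute $C\ll B^{1/3}$ into each of the three terms of $E(C,Q)$.
\end{itemize}
The third substitution gives $BC^3\|Q\|^{1/2}\ll B^2\|Q\|^{1/2}$ and $BC^2\|Q\|\ll B^{5/3}\|Q\|$. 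It also gives $BC^{1/2}\min\{\|Q\|^{1/4},C^{1/2}\}\ll B^{7/6}\min\{\|Q\|^{1/4},B^{1/6}\}$. These reproduce exactly the three terms of $F(B,Q)$ in \eqref{eq:bull}.

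The main point to check is the reduction from gcd $d$ to primitive vectors. One must confirm that the deleted points in $N^0$ (points on $\QQ$-lines) correspond correctly. By definition, $N_d^0$ vanishes when $q_\c$ factors over $\QQ$. When $q_\c$ is irreducible there are no rational lines, so $N^0(q_\c,B/d)=N(q_\c,B/d)$ and nothing is lost. A second point is that Lemma~\ref{lem:finalcontr} assumes $B/d\ge1$, but terms with $d>B$ vanish trivially. I expect no real obstacle: the lemma is a bookkeeping consequence of Lemma~\ref{lem:finalcontr}. The only care needed is in tracking the $\ve$-powers and in handling the contributions from $\#M$ and the degenerate $\c$.
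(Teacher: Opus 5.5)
Your proposal is correct and follows the same route as the paper. You rescale $\x\mapsto\x/d$ so that $N_d^0(q_\c,B)$ becomes $N^0(q_\c,B/d)$, and then apply Lemma~\ref{lem:finalcontr}. You use $C\ll B^{1/3}$ and $\|Q\|\ll B^{20}$ to bound $\#M(C,g,R,h)$, $C^2+BC$ and each term of $E(C,Q)$, and you absorb the $O_\ve(B^\ve)$ divisors $d\mid A_4$. The only cosmetic difference is that you drop the saving in $d$ immediately by monotonicity, whereas the paper carries the factor $1/d$ for a line before discarding it.
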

	
	\begin{proof}
		First of all, notice that $N_d^0(q_\c,B)=N^0(q_\c,B/d)$. Thus it follows from Lemma~\ref{lem:finalcontr} that 
		\begin{align*}
			\sum_{\c\in M(C,g,R,h)}N_d^0(q_{\c},B)\ll_{\ve}
 B^{\ve}\left(\#M(C,g,R,h) + C^2+ \frac{BC}{d}+
 \frac{BT^{\frac{1}{2}}E(C,Q)}{d} \right),
 \end{align*}
where $BC\ll B^{\frac 43}$ and 
\[
E(C,Q)\ll \frac{B^{\frac{2}{3}}\|Q\|}{m_Q^{\frac{1}{6}}\Delta_Q^{\frac{1}{3}}}+\frac{B^{\frac 16}\min\left\{\|Q\|^{\frac{1}{4}}, B^{\frac 16}
				\right\}\|Q\|^{\frac{3}{2}}}{\Delta_Q^{\frac{1}{2}}}+
\frac{B\|Q\|^{\frac{1}{2}}}{m_Q^{\frac{1}{6}}\Delta_Q^{\frac{1}{3}}}.
\]
Recalling \eqref{eq:sheep}, it is clear that $\#M(C,g,R,h)\ll B^{\frac{4}{3}}$ if $C\ll B^{\frac 13}.$
We can bound the total number $d\mid A_4$ by $O_\ve(B^\ve)$ by the divisor bound. 
The statement of the lemma easily follows.
	\end{proof}
	
We are now ready to prove the following result, 
which is a more technical version of Theorem~\ref{thm:main} that is sharper in certain ranges.

	\begin{theorem}\label{prop:main}
Let $\ve>0$. Then 
		\[
		N^{0}(Q,B)\ll_{\ve} B^{\ve}\left(
		B^{\frac43}
		+
		\frac{B^{\frac76}T^{\frac12}
			\min\{\|Q\|^{\frac14},B^{\frac16}\}\|Q\|^{\frac32}}{\Delta_Q^{\frac12}}
		+
		\frac{B^{\frac53}T^{\frac12}\|Q\|}{m_Q^{\frac16}\Delta_Q^{\frac13}} 
		+
		\frac{B^{2}T^{\frac12}\|Q\|^{\frac12}}{m_Q^{\frac16}\Delta_Q^{\frac13}}\right).
		\]
	\end{theorem}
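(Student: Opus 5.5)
We reduce to $\|Q\|\ll B^{20}$: otherwise \cite[Lemma 5]{BHBS-duke} gives $N(Q,B)\ll B$, which is acceptable. Next we use Siegel's lemma. Each primitive $\x\in\ZZ^4$ with $Q(\x)=0$ and $|\x|\le B$ lies on a plane $\c\cdot\x=0$ with $\c\in\ZZp^4$ and $|\c|\ll B^{1/3}$. We take $\c$ to be a short vector in the rank-$3$ lattice $\{\c:\c\cdot\x=0\}$, whose determinant is $|\x|\le B$.

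Permuting coordinates so that $|c_4|=|\c|$, the map $\x\mapsto(x_1,x_2,x_3)$ is injective on the plane, because $x_4=-(c_1x_1+c_2x_2+c_3x_3)/c_4$. Its image satisfies $q_\c(x_1,x_2,x_3)=0$. The triple $(x_1,x_2,x_3)$ need not be primitive, but $d=\gcd(x_1,x_2,x_3)$ must divide $c_4x_4$ while being coprime to $x_4$. Hence $d\mid c_4$, which gives $O_\ve(B^\ve)$ choices of $d$. The paper's Lemma~\ref{lem:finalcontrd} sums over $d\mid A_4$; I would restate it with $d\mid c_4$, which changes nothing in its proof since only the divisor bound is used.

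A point counted by $N^0(Q,B)$ lies on no rational line of $Q=0$, so its image lies on no rational line of the conic $q_\c=0$. Therefore
\[
N^0(Q,B)\le \sum_{\substack{\c\in\ZZp^4\\ |\c|\ll B^{1/3}}}\ \sum_{d} N^0_d(q_\c,B),
\]
where we also account for the permutation of indices. This costs a factor $O(1)$, applied to each ordering.

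Vectors $\c$ with $Q^*(\c)=0$ need separate treatment, since Lemma~\ref{lem:countM1} used $Q^*(\c)\neq 0$. In that case $q_\c$ is singular and splits into two lines. If the lines are rational they contribute nothing to $N^0$; if they are conjugate they contain $O(1)$ rational points. We bound the number of such $\c$ by the number of primitive zeros of the dual quadric $Q^*$ with height $\ll B^{1/3}$. The crude bound $O(B^{2/3+\ve})$ from Comtat's estimate \eqref{eq:comtat} suffices, since $B^{2/3}\le B^{4/3}$.

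For the remaining $\c$, we dyadically decompose $C=|\c|\ll B^{1/3}$ and $R=|Q^*(\c)|$, using $R\ll \Delta_Q C^2/m_Q\ll B^{O(1)}$. We also range over all values $g=g_\c\le\|Q\|$ and $h=\tilde d_\c$ from \eqref{eq:cow}. This is the point needing care, since summing over all values of $g$ and $h$ is not logarithmic. However, $g\mid A_4$ (in the permuted labelling) and $h$ divides $c_1c_2c_3$ (or $c_2c_3$, or equals $1$) by Lemmas~\ref{lem:cimidhi} and~\ref{lem:goat}. So each $\c$ has a unique $(g,h)$, and both parameters range over divisors of integers of size $B^{O(1)}$. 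Instead of summing the bound over all $g,h$, I would sum over the divisor-restricted set.

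Cleaner still is to avoid the sum over $h$ altogether. Lemma~\ref{lem:finalcontr} bounds the sum over $M(C,g,R,h)$ by $\#M$ plus terms independent of $g$ and $h$. Since the number of pairs $(g,h)$ is $O(B^\ve)$ by the divisor bound, this costs only $B^\ve$.

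Applying Lemma~\ref{lem:finalcontrd} to each of the $O(B^\ve)$ tuples $(C,g,R,h)$ gives
\[
N^0(Q,B)\ll_\ve B^{\ve}\bigl(B^{4/3}+T^{1/2}F(B,Q)\bigr).
\]
The sum $\sum\#M(C,g,R,h)$ over all tuples is at most the total number of $\c$, which is $O(B^{4/3})$, rather than the bound multiplied by the number of tuples. Together with $C^2+BC\ll B^{4/3}$, and with $F(B,Q)$ from \eqref{eq:bull} supplying exactly the three remaining terms of the theorem, this completes the argument.

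The main obstacle I expect is the bookkeeping of the divisor counts for $g$ and $h$, to make sure the count really is $B^{\ve}$. This uses $g\mid A_4$ with $A_4\ll B^{20}$, and $h\mid c_1c_2c_3$. A second point is to check that $N^0$ for $Q$ really transfers to $N^0_d$ for $q_\c$, including the degenerate case $Q^*(\c)=0$.
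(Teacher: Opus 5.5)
Your architecture follows the paper's proof: reduction to $\|Q\|\ll B^{20}$, Siegel's lemma, projection to the conic $q_\c$, dyadic decomposition in $C$ and $R$, and Lemma~\ref{lem:finalcontrd}. However, the step you flag as delicate is not justified. The terms $C^2+BC+BT^{1/2}E(C,Q)$ in Lemma~\ref{lem:finalcontr} do not depend on $h$. The reason is that the saving $h^{-1/2}$ in the second term of Lemma~\ref{lem:cardM} is exactly cancelled by the factor $h^{1/2}=d_\c'^{1/2}$ in Lemma~\ref{lem:countM1}. So what matters is the number of distinct values of $h$ for which $M(C,g,R,h)\neq\emptyset$.

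Knowing $h=\tilde d_\c\mid c_1c_2c_3$ only constrains $h$ in terms of the particular $\c$. As $\c$ ranges over $|\c|\sim C$, the integers $c_1c_2c_3$ vary, and from this divisibility alone you cannot rule out that $\asymp C^3$ (up to $\asymp B$) different values of $h$ occur. The divisor bound controls the divisors of one integer, not of a family of integers. So your claim that there are $O(B^{\ve})$ pairs $(g,h)$ does not follow. Summing the $h$-independent terms over that many values of $h$ would destroy the estimate. For $g$ there is no problem, since $g\mid A_i$ for some $i$, hence $g\mid\Delta_Q$.

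The missing ingredient is a divisibility of $h$ by an integer that does not depend on $\c$. The paper uses $\tilde d_\c\mid\Delta_Q$, via the argument of \cite{BrowningHeathBrown}. Then $h$ runs over the $O(\Delta_Q^{\ve})=O(B^{\ve})$ divisors of $\Delta_Q$, since $\|Q\|\ll B^{20}$.

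The analogous manoeuvre you make for $d$ is fine, but for a reason that does not transfer to $h$. There, $d$ is summed for each fixed $\c$, so you may bound $\sum_{d\mid c_4}N^0_d(q_\c,B)\le\tau(c_4)N^0(q_\c,B)$, using $N^0(q_\c,B/d)\le N^0(q_\c,B)$. By contrast, $h$ partitions the set of vectors $\c$, so a per-$\c$ divisor bound cannot replace a uniform one.

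Your other deviations from the paper are harmless. These are using $d\mid c_4$ instead of $d\mid A_4$, and invoking Comtat's bound for the $\c$ with $Q^*(\c)=0$ where the trivial $O(B^{4/3})$ already suffices.
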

	\begin{proof}
For any $\c\in \ZZ_{\prim}^4$, let 
	$N^0(Q,B,\c)$ be the number of points counted by $N^0(Q,B)$ which satisfy the additional constraint $\c\cdot \x=0$.
	As explained in~\cite[Lemma 2.1]{BrowningHeathBrown}, it follows from Siegel's lemma that any solution to $Q(\x) = 0$ with $\x \in \ZZ^4$ and $|\x| \leq B$ must satisfy 
the linear equation	$\c\cdot \x=0$ for some $\c \in \ZZ^4_{\prim}$ with $|\c| \ll B^{\frac 13}$. Thus
\[
		N^0(Q,B)\leq \sum_{\substack{\c \in \ZZ^4_{\prim}\\ |\c|\ll B^{\frac 13}}}N^0(Q,B,\c).
\]
Suppose without loss of generality that $\c \in \ZZ^4_{\prim}$ is such that 
$\abs{\c}=\abs{c_4}$. 
		Let $\x\in \ZZ_{\prim}^4$ be such that $Q(\x)=\x\cdot \c=0$. Then $q_\c(x_1,x_2,x_3)=0$, in the notation of Section~\ref{sec:quad}. 
			 If $q_\c$ factorises over $\QQ$ then $\x$ is not counted by 
			 $N^0(Q,B,\c)$. If not, then $\x$ is counted by $N_d^0(q_{\c},B)$, for some $d\in \ZZ_{\geq 1}$. Moreover, since $\x$ is primitive and $Q(\x)=0$, we must have $\gcd(x_1,x_2,x_3)\mid A_4$. Hence
\[
		N^0(Q,B,\c)\leq \sum_{d\mid A_4} N_d^0(q_{\c},B).
\]
	
		Notice that $\gcd(A_4,c_4^2)\mid \Delta_Q$ and $d_{\c}\mid \Delta_Q$, where the latter follows from the argument at the top of page 11 in~\cite{BrowningHeathBrown}. In particular, we must have $\tilde d_\c\mid \Delta_Q$, in the notation of \eqref{eq:cow}. 
		Moreover, $Q^*(\c)\ll \Delta_Q \abs{\c}^2/m_Q$. 
			Fixing $g,h\mid \Delta_Q$, $1\leq C\ll B^{\frac 13}$, and $1\leq R\ll \Delta_Q C^{2}/m_Q$, it now follows from Lemma~\ref{lem:finalcontrd} that
		\begin{align*}
			\sum_{\c\in M(C,g,R,h)}N^0(Q,B,\c)\ll_{\ve} 
			B^{\ve}\left(
			B^{\frac43}
			+
T^{\frac12} F(B,Q)\right),
\end{align*}
where $F(B,Q)$ is given by \eqref{eq:bull}.
Notice that this inequality holds even when $R=0$. Indeed, if $Q^*(\c)=0$, then $q_{\c}(x_1,x_2,x_3)=0$ is singular and so splits as union of two lines, so that $N^{0}(q_\c,B)=O(1)$. 
Hence it follows that 
\[
\sum_{\c\in M(C,g,0,h)}N^0(Q,B,\c)\ll
		B^{\frac43}.
		\] 
		We conclude by summing over all possible choices of $g$ and $h$, and allowing $R$ and $C$ to run over all powers of $2$, for which 
		$1\leq C\ll B^{\frac 13}$ and $R\leq \Delta_Q C^{2}/m_Q$.
	\end{proof}
	
	\begin{proof}[Proof of Theorem~\ref{thm:main}]
		Assume first that $\|Q\|^3\geq \Delta_QB^2$ and observe that 
		\eqref{eq:comtat} implies the bound 
		\[
		N^0(Q,B)\ll B^{2}\leq \frac{B^{\frac{4}{3}}\|Q\|}{\Delta_Q^{\frac{1}{3}}}\leq \frac{B^{\frac 53}T^{\frac{1}{2}}\|Q\|}{\Delta_Q^{\frac{1}{3}}}.
		\]
		Alternatively, if $\|Q\|^3\leq \Delta_QB^2$, we apply Theorem~\ref{prop:main}. But then we just need to observe that 
		\[
		\frac{B^{\frac{7}{6}}\min\{B^{\frac{1}{6}},\|Q\|^{\frac{1}{4}}\}T^{\frac{1}{2}}\|Q\|^{\frac{3}{2}}}{\Delta_Q^{\frac{1}{2}}}\leq\frac{B^{\frac{4}{3}}T^{\frac{1}{2}}\|Q\|^{\frac{3}{2}}}{\Delta_Q^{\frac{1}{2}}}\leq \frac{B^{\frac 53}T^{\frac{1}{2}}\|Q\|}{\Delta_Q^{\frac{1}{3}}}. \qedhere
		\]
	\end{proof}
	
	\section{Application to some biprojective fivefolds} \label{final}
	
\subsection{Estimates for points on cubic surfaces}
	Given a surface $S\subset \PP^3$, we denote by $N(S,B)$ the cardinality of the set of rational points on $S$ of exponential height bounded by $B$, 
	and we let $N^{0}(S,B)$ denote the cardinality of the subset of points that do not lie on any rational lines contained in $S$. 
	The following coefficient-sensitive estimate follows by combining work of Walsh
	on the 
	 global determinant method \cite{walsh} with a refined 
determinant method estimate of	 Salberger~\cite{salberger}. 
Since the precise dependence on the coefficient height is important here, we include the relevant modification.
	
	\begin{lemma}\label{lem:cubicab}
		Let $B_1, B_2, B_3, B_4\in \ZZ_{\neq 0}$,
				with $\gcd(B_1,B_2,B_3,B_4)=1$,
				 and let $S\subset\mathbb{P}^3$ be the smooth cubic surface 
		$
		B_1x_1^3+B_2x_2^3+B_3x_3^3+B_4x_4^3=0.
		$
Then 		\[
		N^{0}(S,B)\ll_\ve B^{\ve}\left(\frac{B^{\frac {12}{7}}}{M^{\frac{4}{21}}}+\frac{B^{\frac{15}{28}+1}}{M^{\frac{5}{84}}}+B \right),
		\]
		where $M=\max_{1\leq i\leq 4}\{\abs{B_i}\}$.
	\end{lemma}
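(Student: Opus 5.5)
The plan is to combine two determinant-method bounds: Salberger's refined estimate, which gives $B^{12/7+\ve}$ uniformly on the complement of the lines, and Walsh's coefficient-sensitive global determinant method, which improves as $M$ grows. We run Salberger's argument with Walsh's extra saving inserted at the step where the points are covered by auxiliary surfaces.

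Walsh's form of the global determinant method gives the following. The points of $S(\QQ)$ of height at most $B$ lie on
\[
O_\ve\Bigl(B^{\ve}\bigl(1+B^{\frac{2}{\sqrt{3}}}/M^{\kappa}\bigr)\Bigr)
\]
auxiliary surfaces of degree $O_\ve(1)$ not containing $S$, for a suitable explicit $\kappa>0$ coming from the coefficient dependence. The mechanism is the standard one: the points reduce modulo many primes to a small set. The coefficient saving enters because a smooth diagonal cubic with a large coefficient forces the points of height at most $B$ to satisfy an extra small-height congruence, or to lie near a lower-dimensional locus. Concretely, the point $\x$ with $|B_4x_4^3|\le 3\max_{i\le 3}|B_ix_i^3|$ constrains $x_4$ to a range of size $(B^3\max_{i\le3}|B_i|/|B_4|)^{1/3}$. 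Organised through Walsh's primitive-vector/Siegel-type bookkeeping, this yields an effective height of the surface that decreases with $M$.

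Salberger's method for cubic surfaces refines this count. He slices by auxiliary surfaces and then by hyperplane sections, counting points on plane cubic curves $S\cap\{\c\cdot\x=0\}$ via the uniform bound $O(B^{2/3+\ve}/|\c|^{\ldots})$ for curves of degree $3$. The points lying on lines are excluded from this count. Optimising the degree of the auxiliary forms against the number of planes produces the exponent $12/7$. I would redo Salberger's optimisation keeping track of the factor $M^{-\kappa}$ in the count of auxiliary forms and in the Siegel-lemma bound for plane heights. This should give
\[
\frac{B^{\frac{12}{7}}}{M^{\frac{4}{21}}}+\frac{B^{1+\frac{15}{28}}}{M^{\frac{5}{84}}},
\]
where the second term comes from the regime in which the number of relevant planes is itself constrained by the coefficients. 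The residual $B^{1+\ve}$ accounts for the cases where the saving has exhausted the main terms, namely the contribution of a bounded number of curves each carrying $O(B^{1+\ve})$ points.

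The main obstacle is the precise bookkeeping of how the exponent of $M$ propagates through Salberger's optimisation. Both the height of the relevant conic/cubic sections and the number of auxiliary forms must be made explicit in $M$. If the direct propagation fails, the fallback is a case split. When $M\le B^{c}$ for a suitable $c$, use Salberger's bound $B^{12/7+\ve}$ unchanged. When $M$ is large, use Walsh's bound together with the observation that the smallest term $|B_ix_i^3|$ must be balanced by the others, which reduces to counting on curves of degree $3$ in planes of controlled height. Finally, choose the breakpoints so that the minimum of the estimates is dominated by the three stated terms.
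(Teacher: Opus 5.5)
You have the right two ingredients: the paper also combines Walsh's coefficient-sensitive global determinant method with Salberger's refinement for cubic surfaces. However, your proposal never produces the stated exponents. The decisive step is the sentence ``This should give\dots'', and the mechanisms you describe around it would not yield it.

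What is missing is the actual structure of the argument. Walsh's Theorem 1.3 is a \emph{global} statement. It does not give many auxiliary surfaces of bounded degree. It says that all points of height at most $B$ lie on a single curve $S\cap\{G=0\}$ with $\deg G=D\ll_\ve (B/M^{1/9})^{\sqrt3/2}+B^\ve$. Insert Salberger's Lemma 16.11 into Walsh's proof of his Theorem 2.2, replacing the constant $(n!)^{1/n}n/(n+1)=2\sqrt2/3$ by $7\sqrt6/18$. This upgrades the bound to $D\ll_\ve (B/M^{1/9})^{6/7}+B^\ve$. One then counts points on the components of this curve:
\begin{itemize}
\item lines are removed by $N^0$;
\item conics contribute $O_\ve(B^{1+\ve}D^{5/8})$, by Salberger's Lemma 5.3;
\item components of degree at least $3$ contribute $O_\ve(B^\ve(D^2+DB^{2/3}))$, by uniform bounds for curves.
\end{itemize}
Finally, $DB^{2/3}\le D^{13/12}B^{2/3}=(D^2)^{1/3}(BD^{5/8})^{2/3}\le D^2+BD^{5/8}$. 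Substituting $D$ gives exactly $D^2=B^{12/7}M^{-4/21}$ and $BD^{5/8}=B^{1+15/28}M^{-5/84}$. The term $B^{1+\ve}$ comes from the case $D\ll B^\ve$.

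Several of your specific claims point the wrong way.
\begin{itemize}
\item The second term does not come from a ``regime in which the number of relevant planes is constrained by the coefficients''. There is no slicing into plane cubic curves in this argument; the second term is the conic contribution $BD^{5/8}$.
\item Your concrete source of the $M$-saving is the range of $x_4$ shrinking when $|B_4|$ dominates. That depends only on ratios of coefficients, so it gives nothing when $|B_1|\asymp|B_2|\asymp|B_3|\asymp|B_4|\asymp M$. The lemma saves a power of $M$ in exactly that case. The saving in Walsh's theorem is arithmetic: it is the effective height $B/M^{1/9}$ in the determinant estimate, not a real-variable box constraint.
\item Your fallback case split cannot work. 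Using Salberger's $B^{12/7+\ve}$ unchanged for $M\le B^c$ fails to give $B^{12/7}M^{-4/21}$ as soon as $M$ is a positive power of $B$ below $B^c$. So you would need the full $M$-dependent bound throughout the range, which brings you back to the missing step above.
\end{itemize}
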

	
	\begin{proof}
		By~\cite[Theorem 1.3]{walsh}, points of bounded height lie on a curve of degree 
		\[
		D\ll_\ve\left(\frac{B}{M^{\frac 19}}\right)^{\frac{\sqrt{3}}{2}}+B^{\ve}.
		\] The exponent $\sqrt{3}/2$ can be improved to $6/7$ using work of Salberger~\cite{salcubic}, as we briefly explain now. By encoding~\cite[Lemma 16.11]{salcubic} in the proof of~\cite[Theorem 2.2]{walsh} (as in the argument at the beginning of~\cite[Section 3]{bv-quad}, for example), one can substitute $(n!)^{\frac{1}{n}}n/(n+1)=2\sqrt{2}/3$ with $7\sqrt{2}\sqrt{3}/18$. Proving~\cite[Theorem 1.3]{walsh} with this new bound, one gets
		\[
		D\ll_\ve\left(\frac{B}{M^{\frac 19}}\right)^{\frac{6}{7}}+B^{\ve}.
		\] 
		Points on conics on the curve are $O_{\ve}(B^{1+\ve}D^{\frac 58})$ by~\cite[Lemma 5.3]{salberger}. Points on irreducible curves of degree at least $3$ are $O_{\ve}(B^{\ve}(D^2+DB^{\frac 23}))$, using~\cite{curves} as in~\cite[Lemma 2.1]{smooth}. We conclude by noticing that
		\[
		DB^{\frac 23}\leq D^{\frac{13}{12}}B^{\frac 23}=\left(D^2\right)^{\frac 13}\left(BD^{\frac 58}\right)^{\frac 23}\leq D^2+BD^{\frac 58}. \qedhere
		\]
	\end{proof}

\subsection{The main argument}

We are now ready to start tackling the proof of Theorem~\ref{thm:appl}, which concerns the biprojective fivefold \eqref{eq:eq}, for $\c\in \ZZp^4$ with $c_1\dots c_4\neq 0$.
	Recall the definition of the thin set $W_1$ from \eqref{eq:thin1}. 
	If $a=b=3$, let
\[
W_3=
\bigcup_{i\neq j}
\left\{
(x,y)\in Z(\QQ)\setminus W_1:
c_ix_i^3y_i^3+c_jx_j^3y_j^3=0
\right\}.
\]
Suppose that $(a,b)\neq(3,3)$. If $a\neq3$, let
$W_3^{(x)}=\emptyset$, while if $a=3$, let
\[
W_3^{(x)}
=
\bigcup_{i\neq j}
\left\{
(x,y)\in Z(\QQ)\setminus W_1:
\frac{c_i y_i^b}{c_jy_j^b}\in\QQ^{\times3}
\right\}.
\]
Similarly, if $b\neq3$, let $W_3^{(y)}=\emptyset$, while if
$b=3$, let
\[
W_3^{(y)}
=
\bigcup_{i\neq j}
\left\{
(x,y)\in Z(\QQ)\setminus W_1:
\frac{c_i x_i^a}{c_jx_j^a}\in\QQ^{\times3}
\right\}.
\]
In this case put $W_3=W_3^{(x)}\cup W_3^{(y)}$.
Finally, put
\[
W=W_1\cup W_3.
\]
Then we shall take $U=Z(\QQ)\setminus W$
in Theorem~\ref{thm:appl}, 
where $W$ is a thin subset of $Z(\QQ)$.

	Let $\X=(X_1,X_2,X_3,X_4)\in \RR_{>0}^4$ and $\Y=(Y_1,Y_2,Y_3,Y_4)\in \RR_{>0}^4$. We say $\x\sim \X$ if $|x_i|\sim X_i$ for $1\leq i\leq 4$.
	Define
	\begin{equation}\label{eq:fox}
	N(U,\X,\Y)=\#\left\{(\x,\y)\in \ZZ_{\prim}^4\times \ZZ_{\prim}^4: (x,y)\in U,~
	\x\sim \X,~ \y\sim \Y\right\},
	\end{equation}
	where $x=[\x]$ and $y=[\y]$ are the corresponding points in projective space and $U=Z(\QQ)\setminus W$.
We henceforth let $X=2\max_{1\leq i\leq 4}\{X_i\}$ and $Y=2\max_{1\leq i\leq 4}\{Y_i\}$.
By the height bound, we can assume 
\begin{equation}\label{eq:height}
X^{4-a}Y^{4-b}\ll B.
\end{equation}
On summing over all the dyadic intervals, we will then have 
\begin{equation}\label{eq:dog}
N(U,B)\ll (\log B)^8 \max_{
\substack{
(\X,\Y)\in \RR_{>0}^8\\ 
\text{\eqref{eq:height} holds
}}} N(U,\X,\Y)
\end{equation}
in \eqref{eq:duck}.

We proceed by collecting together various estimates
for $N(U,\X,\Y)$, for various choices of $a,b$. 
Our first bound uses basic Fourier analysis and is modelled on 
\cite[Lemma~2.8]{BHB-quadbundle}, which is used to study the case $(a,b)=(2,1)$.

	\begin{lemma}\label{lem:CSab}
For all $\ve>0$, we have 
		\[
		N(U,\X,\Y)\ll_\ve 
		\begin{cases}
		X^{2+\ve} Y^{3+\ve} & \text{ 		if $a\in \{2,3\}$,}\\ 
X^{3+\ve} Y^{3+\ve} & \text{ 		if $a=1$.}
\end{cases}
		\]
	\end{lemma}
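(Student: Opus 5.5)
The plan is to model the argument on \cite[Lemma~2.8]{BHB-quadbundle}. We regard the equation \eqref{eq:eq} as a linear equation in the four quantities $u_i=c_ix_i^a$ with weights $y_i^b$, and detect its vanishing by orthogonality, together with Cauchy--Schwarz (or H\"older) in $\x$. Since points of $W_1$ are excluded, all $x_i,y_i$ are nonzero.

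\textbf{Case $a\in\{2,3\}$.} We fix $\y\sim\Y$, which gives $O(Y^4)$ choices, but we aim to save a factor $Y$ on average.

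1. Write
\[
N(U,\X,\Y)\le \sum_{\y\sim\Y}\int_0^1\prod_{i=1}^4 S_i(\alpha y_i^b)\,\d\alpha,
\qquad
S_i(\theta)=\sum_{|x|\sim X_i}e(\theta c_i x^a).
\]

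2. Apply H\"older in the form $\prod_i|S_i|\le \sum_i |S_i|^4$ (up to constants). This reduces matters to bounding, for each $i$,
\[
\sum_{\y}\int_0^1 |S_i(\alpha y_i^b)|^4\,\d\alpha .
\]

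3. Substitute $\beta=\alpha y_i^b$. By periodicity, $\int_0^1|S_i(\alpha m)|^4\,\d\alpha=\int_0^1|S_i(\beta)|^4\,\d\beta$ for every nonzero integer $m$. The latter integral counts solutions of $x_1^a+x_2^a=x_3^a+x_4^a$ with $|x_j|\ll X$ (the $c_i$ cancels). For $a=2,3$ this is $O_\ve(X^{2+\ve})$: by the divisor bound when $a=2$, and by Hooley/the standard Vinogradov-type bound for sums of two cubes when $a=3$. Summing trivially over $\y$ gives $X^{2+\ve}Y^4$, which is one factor of $Y$ too much.

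4. To recover that factor, I will instead keep one coordinate of $\y$ entangled. Namely, fix $y_1,y_2,y_3$ and write the count as
\[
\int_0^1 S_1(\alpha y_1^b)S_2(\alpha y_2^b)S_3(\alpha y_3^b)\,T(\alpha)\,\d\alpha,
\qquad
T(\alpha)=\sum_{x_4,y_4}e(\alpha c_4x_4^ay_4^b).
\]
Cauchy--Schwarz then gives
\[
\Bigl(\int|S_1S_2|^2\Bigr)^{1/2}\Bigl(\int |S_3|^2|T|^2\Bigr)^{1/2},
\]
or a similar split. The term $\int|T|^2$ counts solutions of $x^ay^b=x'^ay'^b$, which is $O_\ve((XY)^{1+\ve})$ by the divisor bound. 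With $|S_3|\le X$, and using that $\int|S_1(\alpha y_1^b)S_2(\alpha y_2^b)|^2$ counts solutions of $y_1^b(x_1^a-x_1'^a)=y_2^b(x_2'^a-x_2^a)$, which is $\ll X^{2+\ve}$ off the diagonal plus $X^2$ on the diagonal, summing over $y_1,y_2,y_3$ and balancing should give $X^{2+\ve}Y^{3+\ve}$.

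The bookkeeping of which variables to pair is the main obstacle. The natural choice is to pair $(x_4,y_4)$ into $T$, and to pair $x_1,x_2$ with a second copy of $x_3$ so that the Hua-type fourth moment yields exactly $X^2$.

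\textbf{Case $a=1$.} Here the fourth moment of a linear sum is no longer of size $X^2$. Instead I will simply fix $\y$ and $x_1,x_2$ (at most $X^2Y^4$ choices) and then count $(x_3,x_4)$ on a line. This is wasteful, so I will rather fix $y_1,\dots,y_4$ and $x_1,x_2,x_3$; then $x_4$ is determined, giving $X^3Y^4$. To save $Y$, I will again use the $T(\alpha)$ device: the bound $\int|T|^2\ll (XY)^{1+\ve}$ combined with the $L^2$ norm of the linear exponential sums, each $\ll X$, gives $X^{3+\ve}Y^{3+\ve}$.
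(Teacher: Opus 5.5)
\textbf{There is a genuine gap in Step 4.} Steps 1--3 are fine, but as you note they only give $X^{2+\varepsilon}Y^4$. Everything therefore rests on Step 4, and Step 4 does not produce the missing factor $Y$.

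With $(y_1,y_2,y_3)$ fixed, your split gives $(\int_0^1|S_1S_2|^2)^{1/2}\ll X^{1+\varepsilon}$. Using $|S_3|\le X$ it also gives $(\int_0^1|S_3|^2|T|^2)^{1/2}\ll X(XY)^{1/2+\varepsilon}$. Summing over the $O(Y^3)$ triples then yields $X^{5/2+\varepsilon}Y^{7/2+\varepsilon}$, which exceeds the target by a factor $(XY)^{1/2}$.

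Suppose you instead count $\int_0^1|S_3T|^2$ directly: fix $x_4,x_4',y_4,y_4'$ and apply the divisor bound to $x_3^a-x_3'^a$. This gives $X^{2+\varepsilon}Y^{2+\varepsilon}$, hence $X^{2+\varepsilon}Y^{1+\varepsilon}$ per triple and $X^{2+\varepsilon}Y^{4+\varepsilon}$ overall. That is no saving over Step 3.

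The case $a=1$ fails in the same way. With two factors in $L^\infty$ and two in $L^2$ you get $X^2\cdot X^{1/2}\cdot(XY)^{1/2+\varepsilon}$ per triple, so $X^{3+\varepsilon}Y^{7/2+\varepsilon}$ in total. The underlying problem is that once $y_1,y_2,y_3$ are fixed and summed trivially, you have discarded exactly the averaging over $y$ that produces the saving.

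\textbf{The missing idea.} Keep every $y_i$ inside the exponential sums, and pay for Cauchy--Schwarz in $y$ only once.

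Take $S_i(\alpha)=\sum_{x,y}e(\alpha c_ix^ay^b)$ over $0<|x|\le X$, $0<|y|\le Y$. Then $S_i(\alpha)=S(c_i\alpha)$ with $S(\alpha)=\sum_{x,y}e(\alpha x^ay^b)$. By H\"older, together with $\int_0^1|S(c_i\alpha)|^4\,\mathrm{d}\alpha=\int_0^1|S(\alpha)|^4\,\mathrm{d}\alpha$, the count is at most $\int_0^1|S(\alpha)|^4\,\mathrm{d}\alpha$.

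Now bound only one of the two factors $|S(\alpha)|^2$ by $2Y\sum_{y}|\sum_x e(\alpha x^ay^b)|^2$, and expand the other. This costs a factor $Y$, but it yields the equation $y^b(x_1^a-x_2^a)+y_3^bx_3^a-y_4^bx_4^a=0$, in which $x_1$ and $x_2$ share the same $y$.

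Consider each of the $O(X^2Y^2)$ choices of $(x_3,x_4,y_3,y_4)$ with $y_3^bx_3^a\neq y_4^bx_4^a$. The divisor bound fixes $y$, then $x_1-x_2$. For $a=2,3$ the cofactor of $x_1-x_2$ is quadratic in $x_2$, so $x_1,x_2$ are then determined up to $O((XY)^\varepsilon)$ choices. For $a=1$ only $x_1-x_2$ is determined, leaving $O(X)$ choices for $x_1$.

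The degenerate case $y_3^bx_3^a=y_4^bx_4^a$ contributes $O((XY)^{1+\varepsilon})\cdot O(XY)$, since then $x_1=\pm x_2$. Multiplying by the factor $Y$ gives $X^{2+\varepsilon}Y^{3+\varepsilon}$ and $X^{3+\varepsilon}Y^{3+\varepsilon}$ respectively, as claimed.
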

	
	\begin{proof}
		We have
		\[
		N(U,\X,\Y)\leq \int_0^1 \sum_{\substack{0<\abs{x_i}\leq X\\0<\abs{y_i}\leq Y}} e\left(\alpha \sum_{1\leq i\leq 4} c_ix_i^ay_i^b\right)\d\alpha=\int_0^1 \prod_{i=1}^4 S_i(\alpha) \d\alpha,
		\]
		with \[S_i(\alpha)=\sum_{\substack{0<\abs{x}\leq X\\0<\abs{y}\leq Y}} e\left(\alpha c_ix^ay^b\right).\]
		H\"older's inequality yields
		\[
		N(U,\X,\Y)\leq \prod_{i=1}^4\left(\int_0^1\abs{S_i(\alpha)}^4\d\alpha\right)^{\frac 14}=\int_0^1\abs{S(\alpha)}^4\d\alpha
		\]
		with
		\[
		S(\alpha)=\sum_{\substack{0<\abs{x}\leq X\\0<\abs{y}\leq Y}} e\left(\alpha x^ay^b\right).
		\]
		
		Using Cauchy--Schwarz, we obtain
		\[
		\abs{S(\alpha)}^2\ll Y\sum_{0<\abs{y}\leq Y}\abs{\sum_{0<\abs{x}\leq X}e\left(\alpha x^ay^b\right)}^2= Y\sum_{0<\abs{y}\leq Y}\sum_{0<\abs{x_1},\abs{x_2}\leq X}e\left(\alpha y^b(x_1^a-x_2^a)\right),
		\]
whence		\begin{align*}
			N(U,\X,\Y)
			&\ll Y\sum_{0<\abs{y}\leq Y}\sum_{\substack{0<\abs{x_1},\abs{x_2},\abs{x_3},\abs{x_4}\leq X\\0<\abs{y_3},\abs{y_4}\leq Y}}\int_0^1e\left(\alpha\left(y^b(x_1^a-x_2^a)+y_3^bx_3^a-y_4^bx_4^a\right)\right) \d\alpha.
		\end{align*}
		It follows that 
		\[
			N(U,\X,\Y) \ll\ Y\#
			\left\{(\x,y,y_3,y_4)\in (\mathbb{Z}_{\ne 0})^7:
			\begin{array}{l}
									|x_i|\le X,\ |y|,|y_3|,|y_4|\le Y,\\
			y^{b}(x_1^{a}-x_2^{a}) + y_3^{b}x_3^{a} - y_4^{b}x_4^{a} = 0
			\end{array}
			\right\}.
		\]
Fix 
one of $O(X^2Y^2)$ choices for 
$x_3,x_4,y_3,y_4$. If $y_3^bx_3^a-y_4^bx_4^a\neq0$, then we have $O_\ve(X^\ve Y^\ve)$ choices for $y,x_1,x_2$ by the divisor bound in the case $a=2,3$, and $O_\ve(X^{1+\ve} Y^\ve)$ choices in the case $a=1$. If $y_3^bx_3^a-y_4^bx_4^a=0$, 
there are only $O_{\ve}(X^{1+\ve} Y^{1+\ve})$ choices for 
$x_3,x_4,y_3,y_4$ and then only $O(X Y)$ choices for $y,x_1,x_2$, since $y\neq 0$.
The statement of the lemma is now clear. 
	\end{proof}

In the next few results it will be convenient to introduce some more notation. 
	Given $\x=(x_1,x_2,x_3,x_4)$, let $S_{\x}\subset\mathbb{P}^3$ be the degree $b$ surface defined by \eqref{eq:eq}, when $\x$ is fixed. 
		Similarly, given $\y=(y_1,y_2,y_3,y_4)$, we let $S_{\y}\subset\mathbb{P}^3$ be the degree $a$ surface defined by \eqref{eq:eq} when $\y$ is fixed. 
				Recall that $U=Z(\QQ)\setminus W$, and suppose that 
$(\x,\y)$ is counted by $N(U,\X,\Y)$, so that $(x,y)\not\in W$. Since $(x,y)\notin W_1$, it follows that $S_{\x}$ and $S_{\y}$ are smooth surfaces. If $a=3$ (respectively, $b=3$), then 
it follows from a result of Segre~\cite{segre} that there are 
no rational lines on $S_{\y}$ (respectively, $S_{\x}$), since $(x,y)\not\in W_3$.

		\begin{lemma}\label{cor:a3}
Let $a=3$ and let $\ve>0$. Then 
		\[
		N(U,\X,\Y)\ll_{\ve} X^{\ve}
		\left(X^{\frac{12}{7}}Y^{4-\frac{4b}{21}}
		+X^{\frac{43}{28}}Y^{4-\frac{5b}{84}}
		+XY^4 \right).
		\]
	\end{lemma}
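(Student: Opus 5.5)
Fix $\y$ with $\y\sim\Y$ and count $\x$ on the fibre. For fixed $\y$, the surface $S_{\y}\subset\PP^3$ is the diagonal cubic surface
\[
B_1x_1^3+B_2x_2^3+B_3x_3^3+B_4x_4^3=0,\qquad B_i=c_iy_i^b .
\]
For $(x,y)\in U$ it is smooth, since $(x,y)\notin W_1$. Because $(x,y)\notin W_3$ and by Segre's result recalled above, $S_{\y}$ contains no rational lines, so every relevant $\x$ is counted by $N^0(S_{\y},X)$.

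The coefficients $B_i$ need not be coprime. Let $e=\gcd(B_1,\dots,B_4)$ and replace $B_i$ by $B_i/e$; this does not change the surface. The key quantity is the new maximum
\[
M=\max_i |B_i|/e .
\]
We need a lower bound of the form $M\gg Y^b$, up to a harmless loss. Here $|B_i|\ge |y_i|^b$ and $\max_i |y_i|\sim Y$, since $|c_i|\ge1$. However $e$ divides $\gcd_i(c_iy_i^b)$, which could be large.

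I would handle this as follows. Since $\gcd(c_1,\dots,c_4)=1$ and $\gcd(y_1,\dots,y_4)=1$, a prime $p\mid e$ must divide $c_iy_i^b$ for all $i$. The cleanest route is the inequality $M\ge |B_4|/e$ together with
\[
e\le |B_1B_2B_3B_4|^{1/4}.
\]
This turns out not to be enough, so instead I would simply use $M\ge 1$ on the bad set and $M\gg Y^b/e$ in general. Then I would sum over the $\y$ with a given $e$: the condition $e\mid c_iy_i^b$ for all $i$ forces, for each $i$, the part of $e$ coprime to $c_i$ to divide $y_i^b$. This saves roughly a factor $e^{\theta}$ in the number of $\y$, which compensates the loss $e^{4/21}$ in $M^{-4/21}$. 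I expect this gcd bookkeeping to be the main obstacle. It may be that the intended argument is simpler: for $(x,y)\in U$ one might have $M\gg Y^b$ outright, with coefficient dependence absorbed into the implied constant. But the theorem is claimed uniform in the $c_i$, so some care is needed. As a fallback I would note that the worst case is $c_i$ sharing factors with the $y_j$, and control it by divisor-counting over $e\mid \prod c_i$, which costs only $X^\ve$-type factors when $e$ is composed of divisors of $c_i$.

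Granting $M\gg Y^b$ up to $X^{\ve}$, Lemma~\ref{lem:cubicab} with height $X$ gives
\[
N^0(S_{\y},X)\ll_\ve X^\ve\left(X^{\frac{12}{7}}Y^{-\frac{4b}{21}}+X^{\frac{43}{28}}Y^{-\frac{5b}{84}}+X\right).
\]
Summing over the $O(Y^4)$ choices of $\y$ then yields exactly
\[
X^{\frac{12}{7}}Y^{4-\frac{4b}{21}}+X^{\frac{43}{28}}Y^{4-\frac{5b}{84}}+XY^4 .
\]
Finally, I would check that any $B^\ve$ losses from Lemma~\ref{lem:cubicab} are really $X^\ve$ (or are absorbed via $Y\ll B$), since the count in the lemma is at height $X$.
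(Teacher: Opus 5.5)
There is a genuine gap, and it sits exactly at the step you flagged. ``Granting $M\gg Y^b$ up to $X^{\ve}$'' is false for individual $\y$. Take $b=2$, $\c=(p^3,1,1,1)$ and $\y=(1,pu_2,pu_3,pu_4)$. The normalised coefficients are $(p,u_2^2,u_3^2,u_4^2)$, so $M\asymp\max(p,(Y/p)^2)$, which is of size $Y$ rather than $Y^2$ when $p\asymp Y$. The estimate can therefore only hold on average over $\y$.

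Your proposed averaging does not close either. The lower bound $M\gg Y^b/e$ (and a fortiori $M\ge 1$ on a ``bad set'') throws away the size of the $c_i$. After that, no saving from $e\mid c_iy_i^b$ can be uniform in $\c$. For example, let $c_1=c_2=c_3=N:=\prod_{p\le Y}p$ and $c_4=1$. Then $e_\y=\operatorname{rad}(y_4)$ for every primitive $\y\sim\Y$. Your bound for $\sum_{\y}M^{-4/21}$ becomes $Y^{-4b/21}\sum_{\y\sim\Y}\operatorname{rad}(y_4)^{4/21}$, which exceeds the target $Y^{-4b/21}\Delta_\Y$ by a factor of order $Y_4^{4/21}$. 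This happens even though the true $M$ is at least $N/Y$. Likewise, your fallback of divisor-counting over $e\mid c_1\cdots c_4$ costs $(c_1\cdots c_4)^{\ve}$, not $X^{\ve}$. That is not allowed, since the lemma is uniform in $\c$.

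The missing idea, which the paper uses, is to keep the coefficients in the lower bound via the geometric mean rather than an upper bound for $e$. Let $g=g_\y=\gcd_i(c_iy_i^b)$, $\c'=(c_iy_i^b/g)_i\in\ZZp^4$, $C_0=|c_1\cdots c_4|$ and $\Delta_\Y=Y_1Y_2Y_3Y_4$. Since $\y$ is primitive, $g\mid C_0$, and
\[
|\c'|^4\ge\prod_{i=1}^4|c_i'|\gg \frac{C_0\Delta_\Y^b}{g^4},
\qquad\text{so}\qquad
|\c'|^{-\frac{4}{21}}\ll \frac{g^{\frac{4}{21}}}{C_0^{\frac{1}{21}}\Delta_\Y^{\frac{b}{21}}}.
\]
On the counting side, for $p^k\| g$ some $c_i$ is prime to $p$ (because $\gcd(c_1,\dots,c_4)=1$), which forces $p^{\lceil k/b\rceil}\mid y_i$. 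Since $b\le 3$, this gives $\#\{\y\sim\Y: g_\y=g\}\ll g^{-1/3+\ve}\Delta_\Y$. The first term then contributes
\[
\ll X^{\frac{12}{7}+\ve}\Delta_\Y^{1-\frac{b}{21}}\,C_0^{-\frac{1}{21}}\sum_{g\mid C_0}g^{\frac{4}{21}-\frac13+\ve}\ll X^{\frac{12}{7}+\ve}Y^{4-\frac{4b}{21}}.
\]
The factor $C_0^{-1/21}$ is exactly what pays for the divisor sum, which is $\ll C_0^{\ve}$; your version has no such factor. The term with exponent $5/84$ works the same way, with $C_0^{-5/336}$ and $\Delta_\Y^{1-5b/336}\le Y^{4-5b/84}$.
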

	
	\begin{proof}
		Fix $\y\in \ZZp^4$ such that $\y\sim \Y$. We will count points on $S_{\y}$ and then sum over all $\y\sim \Y$. 
				Since we are counting points on $U$, we can assume that $S_{\y}$ is smooth and there are no rational lines on it.
				Let 
		$g_{\y}=\gcd(c_1y_1^b,\dots, c_4y_4^b)$. 
		For all primes $p$, if $p^k$ divides $g_{\y}$, then $p^k\mid y_i^b$ for at least one $i$,
		since $\gcd(c_1,\dots,c_4)=1$. Moreover, $p^k\mid c_1\cdots c_4$, since $\gcd(y_1,\dots,y_4)=1$. In particular, it follows that 
	$g_\y\mid c_1\cdots c_4$. 
For a fixed $g\mid c_1\cdots c_4$, we 
let $\mathcal{Y}_g$ denote the set of 
$\y\in \ZZp^4$
such that $\y\sim \Y$ and 
$g_{\y}=g$, so that 
\[
\sum_{g\mid c_1\cdots c_4} \#\mathcal{Y}_g\ll Y_1Y_2Y_3Y_4. 
\]
Furthermore, we clearly have 
\[
\#\mathcal{Y}_g\ll
g^{-\frac{1}{3}}Y_1Y_2Y_3 Y_4,
\]
since $b\leq 3$. 

For any $\y\in \mathcal{Y}_g$, we put  $c_i'=c_iy_i^b/g$, for $1\leq i\leq 4$, so that $\c'\in \ZZp^4.$
It now follows from Lemma~\ref{lem:cubicab} that 
		\begin{align*}
			N(U,\X,\Y)&\ll_\ve X^\ve\sum_{
			\substack{
			g\mid c_1\cdots c_4}}\sum_{\substack{\y\in \mathcal{Y}_g}}
			\left(\frac{X^{\frac{12}{7}}}{|\c'|^{\frac{4}{21}}}
			+\frac{X^{\frac{15}{28}+1}}{|\c'|^{\frac{5}{84}}}
			+X \right).
			\end{align*}
Put 
$
C_0=|c_1\cdots c_4|$ and $\Delta_\Y=Y_1\cdots Y_4$,
and note that 
\[
|\c'|^4\geq \prod_{i=1}^4|c_i'| \gg \frac{C_0\Delta_\Y^b}{g^4}.
\]
But then we may take $
|\c'|\gg C_0^{\frac{1}{4}} \Delta_\Y^{\frac{b}{4}}/g$, obtaining 
		\begin{align*}
			N(U,\X,\Y)&\ll_\ve X^\ve
			\sum_{\substack{g\mid C_0}}
			\sum_{\substack{\y\in \mathcal{Y}_g}}
			\left(\frac{X^{\frac{12}{7}}g^{\frac{4}{21}}}{C_0^{\frac{1}{21}}
		\Delta_\Y^{\frac{b}{21}}}
			+\frac{X^{\frac{15}{28}+1}g^{\frac{5}{84}}}{C_0^{\frac{5}{336}}\Delta_\Y^{\frac{5b}{336}}}\right) 
			+X^{1+\ve}\Delta_\Y\\
			&\ll_{\ve} X^\ve\sum_{\substack{g\mid C_0}}
			\left(\frac{X^{\frac{12}{7}}g^{\frac{4}{21}-\frac{1}{3}}\Delta_\Y^{1-\frac{b}{21}}}{C_0^{\frac{1}{21}}}
			+\frac{X^{\frac{15}{28}+1}g^{\frac{5}{84}-\frac{1}{3}}\Delta_\Y^{1-\frac{5b}{336}}}{C_0^{\frac{5}{336}}}\right)
			+X^{1+\ve} \Delta_\Y\\
&\ll_\ve X^\ve		 
			\left(X^{\frac{12}{7}}Y^{4-\frac{4b}{21}} 
			+X^{\frac{15}{28}+1}Y^{4-\frac{5b}{84}}
			+XY^4 \right),
		\end{align*}
since 
\[
C_0^{-\frac{1}{21}}
\sum_{g\mid C_0}g^{\frac{4}{21}-\frac{1}{3}}\ll
C_0^{-\frac{1}{21}}\ll 1
\quad \text{ and }\quad
C_0^{-\frac{5}{336}}
\sum_{g\mid C_0}g^{\frac{5}{84}-\frac{1}{3}}\ll
C_0^{-\frac{5}{336}}\ll 1. \qedhere
\]
	\end{proof}

\begin{proof}[Proof of Theorem~\ref{thm:appl}]
We will need different arguments for each choice of $a,b\in \{1,2,3\}$, in which we identify the case 
$(a,b)$ with $(b,a)$, by symmetry.

\medskip
\noindent
\underline{The case $(a,b)=(1,1)$.}
It follows from Lemma~\ref{lem:CSab} that $N(U,\X,\Y)\ll_{\ve}X^{3+\ve}Y^{3+\ve}$. On appealing to 
\eqref{eq:height} and \eqref{eq:dog}, we obtain the desired bound
$N(U,B)\ll_{\ve} B^{1+\ve}$. 

\medskip
\noindent
\underline{The case $(a,b)=(2,1)$.}
It follows from Lemma~\ref{lem:CSab} that $N(U,\X,\Y)\ll_{\ve}X^{2+\ve}Y^{3+\ve}$, which leads to 
the desired bound $N(U,B)\ll_{\ve} B^{1+\ve}$, via 
\eqref{eq:dog}.

\medskip
\noindent
\underline{The case $(a,b)=(2,2)$.}
For a point $(x,y)\in Z(\QQ)$ 
outside
$W_1$, put $z_i=x_i y_i$ for $1\leq i\leq 4$. 
For each $\z\in \ZZ_{\neq 0}^4$, the number of factorisations $z_i=x_i y_i$
is $O_\ve (B^\ve)$, by the divisor bound. Moreover, it follows from the 
constraint $H(x,y)\leq B$ that 
$\max_{1\leq i\leq 4}|z_i|\leq |\x||\y|\leq \sqrt{B}$, if 
$x$ is represented by a vector $\x\in \ZZp^4$ and 
$y$ is represented by $\y\in \ZZp^4$.
We are left with counting $\z\in \ZZ_{\neq 0}^4$ such that $|\z|\leq \sqrt{B}$
and 
$c_1z_1^2+\cdots+c_4z_4^2=0$. The desired bound
$N(U,B)\ll_{\ve} B^{1+\ve}$ now follows directly from \eqref{eq:comtat}.

\medskip
\noindent
\underline{The case $(a,b)=(2,3)$.}
It follows from 
 Lemma~\ref{lem:CSab} that 
 \begin{equation}\label{eq:final1}
		N(U,\X,\Y)\ll_{\ve} X^\ve Y^\ve \min \left\{ X^{2}Y^{3}, X^3Y^2\right\}
		\end{equation}
		and (on 
		interchanging the role of $\X$ and $\Y$) it follows from Corollary~\ref{cor:a3}
	\begin{align}\label{eq:final4}
		N(U,\X,\Y)&\ll_{\ve} Y^{\ve}
		\left(Y^{\frac{12}{7}}X^{4-\frac{8}{21}}
		+Y^{\frac{43}{28}}X^{4-\frac{5}{42}}
		+YX^4 \right).
		\end{align}
We now combine these in \eqref{eq:dog} to prove the upper bound $N(U,B)\ll_\ve B^{\frac{44}{25}+\ve}$, noting that we may proceed under the assumption that 	 $X,Y\geq 1$ and $X^2Y\leq B$.
In particular, 
the factors $X^\varepsilon$ and
$Y^\varepsilon$ can be absorbed into $B^\varepsilon$.
We split the range according to the size of $X$, supposing first that
\[
X\leq B^{\frac{6}{25}}.
\]
Using \eqref{eq:final4} and the inequality $Y\leq B/X^2$, we see that 
\begin{align*}
Y^{\frac{12}{7}}X^{4-\frac{8}{21}}
&\leq \left(\frac{B}{X^2}\right)^{\frac{12}{7}}X^{\frac{76}{21}}
 = B^{\frac{12}{7}}X^{\frac{4}{21}}
 \leq B^{\frac{44}{25}},\\
Y^{\frac{43}{28}}X^{4-\frac{5}{42}}
&\leq \left(\frac{B}{X^2}\right)^{\frac{43}{28}}X^{\frac{163}{42}}
 = B^{\frac{43}{28}}X^{\frac{17}{21}}
\leq B^{\frac{173}{100}}
 \leq B^{\frac{44}{25}},\\
YX^4
&\leq X^4\frac{B}{X^2}
 = BX^2
 \leq 
 B^{\frac{37}{25}}
 \leq B^{\frac{44}{25}}.
\end{align*}
Hence this range makes a satisfactory overall contribution to $N(U,B)$. 
Now suppose that
\[
X\geq B^{\frac{6}{25}}.
\]
Using \eqref{eq:final1} and $Y\leq B/X^2$, we get the overall contribution 
\[
\ll_\varepsilon B^\varepsilon X^3Y^2
\leq B^\varepsilon X^3\left(\frac{B}{X^2}\right)^2
= B^{2+\varepsilon}X^{-1}
\leq B^{2-\frac{6}{25}+\varepsilon}
= B^{\frac{44}{25}+\varepsilon}
\]
to $N(U,B)$, which finally proves the claimed bound.

\medskip
\noindent
\underline{The case $(a,b)=(3,1)$.}
It follows from Lemma~\ref{cor:a3} that 
\[
N(U,\X,\Y)\ll_{\ve} X^{\ve}
\left(X^{\frac{12}{7}}Y^{4-\frac{4}{21}}
+X^{\frac{43}{28}}Y^{4-\frac{5}{84}}
+XY^4 \right).
\]
In this case \eqref{eq:height} yields $XY^3\ll B$. Hence it follows from \eqref{eq:dog} that 
\begin{align*}
N(U,B)
&\ll_{\ve} B^{\ve}
\left(B^{\frac{12}{7}}Y^{4-\frac{4}{21}-\frac{36}{7}}
+B^{\frac{43}{28}}Y^{4-\frac{5}{84}-\frac{129}{28}}
+BY \right)\\
&\ll_{\ve} B^{\ve}
\left(B^{\frac{12}{7}}Y^{-\frac{4}{3}}
+B^{\frac{43}{28}}Y^{-\frac{2}{3}}
+B^{\frac{4}{3}} \right)\\
&\ll_{\ve} B^{\frac{12}{7}+\ve},
\end{align*}
since $1\leq Y^3\ll B$. This is satisfactory for the theorem. 

\medskip
\noindent
\underline{The case $(a,b)=(3,3)$.}
Consider the surface 
$S\subset \PP^3$ defined by $c_1z_1^3+\cdots+c_4z_4^3=0$. 
Lines on this surface are contained in the subvarieties $c_{i}z_{i}^3+c_{j}z_{j}^3=c_{k}z_{k}^3+c_{\ell}z_{\ell}^3=0$, for $\{i,j,k,\ell\}=\{1,2,3,4\}$. 
Writing $z_i=x_i y_i$, as in the treatment of the case $(a,b)=(2,2)$, and noting that we are not counting points on $W_3$, it follows that we can exclude points on lines in $S$. 
The claimed bound $N(U,B)\ll_{\ve} B^{\frac{12}{7}+\ve}$ now follows directly from Salberger~\cite[Theorem 16.1]{salcubic}.
\end{proof}

\section{Application to some biprojective fourfolds}\label{sec:lin}
In this section, we prove Theorem~\ref{thm:aplllinear}. 
Let 
\begin{equation}\label{eq:graz}
F(\x,\y)=\ell_1(\y)x_1^2+\ell_2(\y)x_2^2+\ell_3(\y)x_3^2+\ell_4(\y)x_4^2,
\end{equation}
for nonzero linear forms 
$\ell_1,\ell_2,\ell_3,\ell_4\in \ZZ[y_1,y_2,y_3]$ such that any three of them are linearly independent. This is the polynomial defining the smooth fourfold 
 $Z'\subset \PP^3\times \PP^2$ 
considered in 
Theorem~\ref{thm:aplllinear}. Note that we can rewrite the defining polynomial 
as 
\begin{equation}\label{eq:graz'}
F(\x,\y)=q_1(\x)y_1+q_2(\x)y_2+q_3(\x)y_3,
\end{equation} 
for diagonal quadratic forms $q_1,q_2,q_3\in \ZZ[x_1,\dots,x_4]$. 
Moreover, if $C$ is the $3\times 4$ coefficient matrix of $q_1,q_2,q_3$, this matches 
the $3\times 4$ matrix whose columns are the coefficient vectors of $\ell_1,\dots,\ell_4$. Thus $\rank C=3$ under our assumptions on 
$\ell_1,\dots,\ell_4$.

\subsection{Technical results}

Recall the expression 
\eqref{eq:graz},
for linear forms 
$\ell_1,\dots ,\ell_4$ 
such that any three of them are linearly independent. 
Given $\y=(y_1,y_2,y_3)\in \ZZ_{\mathrm{prim}}^3$, let
\[g_{\y}=\gcd\left(\ell_1(\y),\dots,
\ell_4(\y)
\right),
\]
 and put $A_i(\y)=\ell_i(\y)/g_{\y}$, for $1\leq i\leq 4$. 
 Finally, let 
 \[T_{\y}=\gcd_{1\leq i<j\leq 4}(A_i(\y)A_j(\y)).\]
We will need the following result.

\begin{lemma}\label{lem:sumelligen}
 Let $\ve>0$ and let $a,b,c,d\geq 0$ be such that
 \[
 a+b,\ a+c,\ a+d,\ b+c,\ b+d,\ c+d\leq 1.
 \]
 Then
 \[
 \sum_{\substack{\y\in \ZZ_{\mathrm{prim}}^3\\
               \abs{\y}\leq Y\\
                   \prod_{i=1}^4 \ell_i(\y)\neq0}}
 \frac{T_{\y}^{\frac12}}
 {\abs{A_1(\y)}^a\abs{A_2(\y)}^b
  \abs{A_3(\y)}^c\abs{A_4(\y)}^d}
 \ll_{\ve, F}Y^{3-a-b-c-d+\ve}.
 \]
\end{lemma}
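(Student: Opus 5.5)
The plan is to show first that the arithmetic factors $g_\y$ and $T_\y$ are bounded in terms of $F$ alone. After that the sum becomes a purely archimedean sum of negative powers of $|\ell_i(\y)|$, which I would handle by dyadic decomposition and a lattice-point count in a parallelepiped.

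\emph{Step 1: $g_\y\ll_F 1$ and $T_\y\ll_F 1$.} Let $\Delta_F$ be the product of the four $3\times 3$ minors of the coefficient matrix of $\ell_1,\dots,\ell_4$; it is nonzero because any three of the forms are linearly independent. For a triple $\{i,j,k\}$, Cramer's rule gives $\det(\ell_i,\ell_j,\ell_k)\,\y \in \ZZ\ell_i(\y)+\ZZ\ell_j(\y)+\ZZ\ell_k(\y)$. Since $\y$ is primitive, $\gcd(\ell_i(\y),\ell_j(\y),\ell_k(\y))$ divides $\det(\ell_i,\ell_j,\ell_k)$. In particular $g_\y\mid \Delta_F$.

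For $T_\y$, fix a prime $p$ and let $v_1\le v_2\le v_3\le v_4$ be the valuations $v_p(A_i(\y))$ in increasing order. Then $v_p(T_\y)=v_1+v_2$. The second smallest valuation $v_2$ equals the minimum of the valuations over the triple of indices realising $v_2,v_3,v_4$. By the triple-gcd bound this minimum is at most $v_p(\Delta_F)$, and also $v_1\le v_2$. Hence $v_p(T_\y)\le 2v_p(\Delta_F)$, so $T_\y\mid \Delta_F^2$ and $T_\y^{1/2}\ll_F 1$. Moreover $|A_i(\y)|\asymp_F|\ell_i(\y)|$. The sum is therefore
\[
\ll_F \sum_{\substack{\y\in\ZZ^3,\ \abs{\y}\le Y\\ \prod_i\ell_i(\y)\ne 0}} \prod_{i=1}^4|\ell_i(\y)|^{-a_i},
\]
where I write $(a_1,a_2,a_3,a_4)=(a,b,c,d)$.

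\emph{Step 2: dyadic decomposition and counting.} Split the range into dyadic boxes $|\ell_i(\y)|\sim L_i$ with $1\le L_i\ll_F Y$; there are $O((\log Y)^4)$ such boxes. For each triple $\{j,k,l\}$ the map $\y\mapsto(\ell_j,\ell_k,\ell_l)(\y)$ is injective on $\ZZ^3$, so the number of $\y$ in a box is $\ll L_jL_kL_l$. Taking a weighted geometric mean of the four triple bounds with weights $w_1,\dots,w_4\ge 0$, $\sum_i w_i=1$, where $w_i$ is the weight of the triple omitting $i$, the count is $\ll\prod_i L_i^{1-w_i}$. The contribution of the box is then
\[
\ll \prod_{i=1}^4 L_i^{1-w_i-a_i}.
\]

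\emph{Step 3: choice of weights, the only real point.} I want every exponent $1-w_i-a_i$ to be nonnegative, so that $L_i\ll Y$ gives the bound $Y^{\sum_i(1-w_i-a_i)}=Y^{3-a-b-c-d}$. This requires $0\le w_i\le 1-a_i$ with $\sum_i w_i=1$, which is possible exactly when each $a_i\le 1$ and $\sum_i(1-a_i)\ge 1$. Both conditions follow from the hypothesis that all pairwise sums are at most $1$ (together with $a_i\ge 0$): each $a_i\le 1$, and $a+b+c+d=(a+b)+(c+d)\le 2$, so $\sum_i(1-a_i)\ge 2$. For instance, take $w_i=(1-a_i)/\sum_j(1-a_j)$.

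Summing over the $O((\log Y)^4)$ boxes absorbs the logarithms into $Y^{\ve}$ and gives the lemma. I expect Step 1, the boundedness of $T_\y$, to be the main point requiring care, since it is where the independence of every triple of forms is used. The remaining steps are routine.
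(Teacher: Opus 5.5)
Your proof is correct, and its second half takes a different route from the paper's.

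\textbf{Arithmetic factors.} Both proofs show that $g_\y$ divides a $3\times3$ minor. The paper bounds $T_\y$ through a fixed relation $\lambda_1\ell_1+\cdots+\lambda_4\ell_4=0$ with all $\lambda_i\neq0$. A prime power dividing $T_\y$ divides exactly three of the $A_i(\y)$, hence divides the remaining $\lambda_l$, so $T_\y\mid\lambda_1\cdots\lambda_4$. Your valuation argument replaces the relation by the triple-gcd bound and gives $T_\y\mid\Delta_F^2$. In fact you get $T_\y\mid\Delta_F$, since $\gcd(A_1(\y),\dots,A_4(\y))=1$ forces $v_1=0$.

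\textbf{Archimedean sum.} The paper uses no dyadic decomposition. It parametrises by $(m_1,m_2,m_3)=(\ell_1,\ell_2,\ell_3)(\y)$ and replaces $\ell_4$ by a multiple of $\lambda_1m_1+\lambda_2m_2+\lambda_3m_3$. It then applies the pointwise inequality $|r|^{-\alpha}|qr+s|^{-\beta}\le|qr+s|^{-\alpha-\beta}+|r|^{-\alpha-\beta}$ to absorb the exponent $d$ into $a$. Injectivity in $m_1$ reduces everything to $\sum|m_1|^{-(a+d)}|m_2|^{-b}|m_3|^{-c}$, a product of one-variable sums that needs $a+d\le1$ and $b,c\le1$.

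\textbf{Comparison.} Your weighted geometric mean of the four triple counts is symmetric in the forms and never uses the explicit linear relation. It costs only logarithms, which are absorbed into $Y^{\ve}$. Since it needs only weights $0\le w_i\le1-a_i$ summing to $1$, it proves the bound under the weaker hypothesis $\max_i a_i\le1$ and $a+b+c+d\le3$. The paper's pairing argument is shorter and more elementary, but it is tied to (one instance of) the pairwise condition.
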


\begin{proof}
Let
\[
S_{a,b,c,d}=
 \sum_{\substack{\y\in \ZZ_{\mathrm{prim}}^3\\
                   \abs{\y}\leq Y\\
                   \prod_{i=1}^4 \ell_i(\y)\neq0}}
 \frac{T_{\y}^{\frac12}}
 {\abs{A_1(\y)}^a\abs{A_2(\y)}^b
  \abs{A_3(\y)}^c\abs{A_4(\y)}^d}
\]
be the quantity we wish to estimate.
Fix  $(\lambda_1,\lambda_2,\lambda_3,\lambda_4)\in
 \ZZ_{\mathrm{prim}}^4$  such that \[
 \lambda_1\ell_1+\cdots+\lambda_4\ell_4=0.
 \] 
 Since any three of $\ell_1,\dots,\ell_4$ are assumed to be linearly independent, we may proceed under the assumption that 
 $\lambda_1\cdots \lambda_4\neq 0$.
 We begin by proving that 
\[
g_{\y}\ll_F 1,
\]
for $\y\in \ZZ_{\mathrm{prim}}^3$.
To see this, choose three linearly
independent forms $\ell_1,\ell_2,\ell_3$ and let $L_0$ be their $3\times 3$ coefficient matrix, with 
$D=\det L_0\neq 0$. In particular, we have 
\[
\adj(L_0)(\ell_1(\y),\ell_2(\y),\ell_3(\y))^t=D\y.
\]
If $g_\y\mid (\ell_1(\y),\ell_2(\y),\ell_3(\y))$ then clearly $g_\y\mid Dy_i$ for $1\leq i\leq 3$, whence 
$g_\y\mid D$, since $\y$ is assumed to be primitive. This establishes the claim. 
Next, if $p^\nu\mid T_{\y}$ then exactly three of $A_1(\y),\dots,A_4(\y)$ are divisible by $p^\nu$.
But then it follows from the relation $\lambda_1\ell_1+\cdots+\lambda_4\ell_4=0$ that 
$\lambda_1A_1(\y)+\cdots+\lambda_4A_4(\y)=0$, whence 
 there exists a permutation $\{i,j,k,l\}=\{1,2,3,4\}$ such that 
 $p^\nu\mid \gcd(A_i(\y),A_j(\y),A_k(\y),\lambda_l)$.
It therefore follows that 
$T_{\y}\mid \lambda_1\cdots \lambda_4$, 
since $\lambda_1\cdots \lambda_4\neq 0$, 
whence also
\[
T_{\y} \ll_F 1.
\]

Our work so far shows that 
\begin{align*}
S_{a,b,c,d}&\ll_F
 \sum_{\substack{\y\in \ZZ_{\mathrm{prim}}^3\\
                  \abs{\y}\leq Y\\
                  \prod_{i=1}^4\ell_i(\y)\neq0}}
 \frac{1}
 {\abs{\ell_1(\y)}^a\abs{\ell_2(\y)}^b
  \abs{\ell_3(\y)}^c\abs{\ell_4(\y)}^d}
\\
&\ll_F
 \sum_{\substack{\y\in \ZZ_{\mathrm{prim}}^3\\
                  \abs{\y}\leq Y\\
                  \prod_{i=1}^4\ell_i(\y)\neq0}}
 \frac{1}
 {\abs{\ell_1(\y)}^a\abs{\ell_2(\y)}^b
  \abs{\ell_3(\y)}^c
  \abs{\lambda_1\ell_1(\y)+\lambda_2\ell_2(\y)
       +\lambda_3\ell_3(\y)}^d}.
\end{align*}
The map $\y\mapsto(\ell_1(\y),\ell_2(\y),\ell_3(\y))$ is injective,
since 
$\ell_1,\ell_2,\ell_3$ are linearly independent.
Moreover, the image for $\y\in\ZZ_{\mathrm{prim}}^3$ such that
$\abs{\y}\leq Y$ is contained in
$[-C_FY,C_FY]^3$,
for an appropriate constant $C_F>0$.
Thus 
\begin{align*}
S_{a,b,c,d}
&\ll_F
 \sum_{\substack{m_1,m_2,m_3\in\ZZ\\
                  0<\abs{m_i}\ll_FY\\
                  \lambda_1m_1+\lambda_2m_2+\lambda_3m_3\neq0}}
 \frac{1}
 {\abs{m_1}^a\abs{m_2}^b\abs{m_3}^c
  \abs{\lambda_1m_1+\lambda_2m_2+\lambda_3m_3}^d}.
\end{align*}
Clearly 
\[
 \frac{1}{\abs{r}^{\alpha}\abs{qr+s}^{\beta}}
 \leq
 \frac{1}{\abs{qr+s}^{\alpha+\beta}}
 +\frac{1}{\abs{r}^{\alpha+\beta}},
\]
for any $\alpha,\beta\geq0$ and $r(qr+s)\neq0$.
Hence 
\begin{align*}
S_{a,b,c,d}
\ll_F~&
 \sum_{\substack{m_1,m_2,m_3\in\ZZ\\
                  0<\abs{m_i}\ll_FY}}
 \frac{1}{\abs{m_1}^{a+d}\abs{m_2}^b\abs{m_3}^c}\\
&+
 \sum_{\substack{m_1,m_2,m_3\in\ZZ\\
                  0<\abs{m_i}\ll_FY\\
                  \lambda_1m_1+\lambda_2m_2+\lambda_3m_3\neq0}}
 \frac{1}
 {\abs{m_2}^b\abs{m_3}^c
 \abs{\lambda_1m_1+\lambda_2m_2+\lambda_3m_3}^{a+d}}.
\end{align*}
The map $m_1\mapsto \lambda_1m_1+\lambda_2m_2+\lambda_3m_3$ is clearly injective, for fixed $m_2,m_3$, whence
\begin{align*}
S_{a,b,c,d}
&\ll_F
 \sum_{\substack{m_1,m_2,m_3\in\ZZ\\
                  0<\abs{m_i}\ll_FY}}
 \frac{1}{\abs{m_1}^{a+d}\abs{m_2}^b\abs{m_3}^c}
\ll_{\ve,F}Y^{3-a-b-c-d+\ve}. \qedhere
\end{align*}
\end{proof}

Recall the introduction of the quadratic forms  $q_1,q_2,q_3$ in 
\eqref{eq:graz'}.
Writing $\q(\x)=(q_1(\x),q_2(\x),q_3(\x))$, let $\x\in \ZZ_{\prim}^4$ be such that $\q(\x)\neq (0,0,0)$. Let 
\[
g_\x=\gcd(q_1(\x),q_2(\x),q_3(\x))
\]
 and put 
$h_i(\x)=q_i(\x)/g_\x$, for $1\leq i\leq 3$. 
Let $\h(\x)=(h_1(\x),h_2(\x),h_3(\x))\in \ZZ_{\prim}^3$. 
We will need the following result. 

\begin{lemma}\label{lem:sumq}
Let $\ve>0$. Then
\[
 \sum_{\substack{\x\in \ZZ_{\prim}^4\\\abs{\x}\leq X\\
 \q(\x) \neq (0,0,0)}}\frac{1}{|\h(\x)|}\ll_{\ve,F} X^{2+\ve}.
 \]
\end{lemma}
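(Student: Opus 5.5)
The plan is to replace $|\h(\x)|$ by an explicit lower bound in terms of binary quadratic forms in the coordinates of $\x$, and then to count by fixing two coordinates and using divisibility to restrict the other two.

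\emph{Reduction to binary forms.} The rows of the coefficient matrix $C$ of $q_1,q_2,q_3$ span a $3$-dimensional subspace of $\QQ^4$, namely the orthogonal complement of a primitive vector $\boldsymbol{\mu}=(\mu_1,\dots,\mu_4)$. Since any three of $\ell_1,\dots,\ell_4$ are linearly independent, every $\mu_i$ is nonzero. Consequently there is an integer $D=D_F\neq 0$ such that, for all $i\neq j$, each form $D(\mu_jx_i^2-\mu_ix_j^2)$ is an integral combination of $q_1,q_2,q_3$. Conversely, each $q_k$ is a rational combination of these binary forms with $O_F(1)$ coefficients. Writing $m_{ij}(\x)=\mu_jx_i^2-\mu_ix_j^2$ and $G(\x)=\gcd_{i<j}m_{ij}(\x)$, this yields
\[
g_\x\mid D\,G(\x), \qquad |\q(\x)|\asymp_F \max_{i<j}|m_{ij}(\x)|,
\]
and hence
\[
\frac{1}{|\h(\x)|}=\frac{g_\x}{|\q(\x)|}\ll_F \frac{G(\x)}{|m_{12}(\x)|}
\]
whenever $m_{12}(\x)\neq 0$. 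If $m_{12}(\x)=0$, then $x_1/x_2$ is forced to a fixed ratio, so $(x_1,x_2)$ lies on $O(1)$ lines through the origin. In that case I use another pair $(i,j)$ with $m_{ij}(\x)\neq 0$; such a pair exists because $\q(\x)\neq\0$. The argument below is symmetric in the choice of pair, so I treat only $(1,2)$.

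\emph{Counting with $(x_1,x_2)$ fixed.} Fix $(x_1,x_2)$ with $m:=m_{12}(\x)\neq 0$; there are $O(X^2)$ such pairs. Then $G=G(\x)$ is a divisor of $m$, so I group the remaining $(x_3,x_4)$ according to the value $G\mid m$. The condition $G\mid \mu_1x_3^2-\mu_3x_1^2$ confines $x_3$ to $O_\ve(G^\ve)$ residue classes modulo $G/O_F(1)$, and similarly for $x_4$. Coprimality issues with $\mu_1$ and with $\gcd(x_1,G)$ only cost bounded factors, using primitivity of $\x$ together with the relations $G\mid m_{13},m_{14}$. Therefore the contribution for fixed $(x_1,x_2)$ is
\[
\ll_\ve X^\ve\sum_{G\mid m}\frac{G}{|m|}\Bigl(\frac{X}{G}+1\Bigr)^2\ll_\ve X^{\ve}\Bigl(\frac{X^2}{|m|}+\tau(m)+\frac{X}{|m|}\tau(m)\Bigr)\ll_\ve X^{2\ve}\Bigl(\frac{X^2}{|m|}+1\Bigr).
\]

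\emph{Summing over $(x_1,x_2)$.} The term $1$ contributes $O(X^{2+\ve})$. For the term $X^2/|m|$ I need
\[
\sum_{\substack{|x_1|,|x_2|\leq X\\ m_{12}\neq 0}}\frac{1}{|\mu_2x_1^2-\mu_1x_2^2|}\ll_\ve X^\ve.
\]
I prove this by a dyadic decomposition $|m|\sim M$. By the divisor bound in the quadratic order attached to $\mu_1\mu_2$, each nonzero value $n$ is taken $O_\ve(X^\ve)$ times, except for the growth coming from units. That growth is at most logarithmic in $X$, since $|x_i|\leq X$ permits only $O(\log X)$ unit multiples. Hence the number of pairs with $|m|\sim M$ is $O_\ve(X^\ve M)$, each dyadic range contributes $O(X^\ve)$, and summing over ranges gives $X^2\cdot X^{\ve}$. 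Altogether this yields the claimed $X^{2+\ve}$.

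\emph{Main obstacle.} I expect the hardest step to be the local analysis at primes dividing $D\mu_1\cdots\mu_4$ and primes dividing $\gcd(x_1,G)$. The claim that $x_3$ lies in few residue classes modulo roughly $G$ can fail when $p\mid x_1$ and $p\mid\mu_1$. My first approach is to exploit primitivity of $\x$: if $p^k\mid G$ and $p\mid x_1$, then $p\mid \mu_1x_2^2$, and continuing through all the $m_{ij}$ shows that $v_p(G)$ is bounded in terms of $v_p(\mu_1\cdots\mu_4)$. So such primes contribute only an $O_F(1)$ factor, which the implied constant absorbs.
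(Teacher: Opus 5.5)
Your strategy differs from the paper's and can be made to work. However, the argument you give for the step you flag as the main obstacle is false: $v_p(G)$ is \emph{not} bounded in terms of $v_p(\mu_1\cdots\mu_4)$ when $p\mid x_1$.

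For a counterexample, take $q_k=x_k^2-\mu_kx_4^2$ for $k=1,2,3$, with $\mu=(p^2,1,1,1)$. This corresponds to the admissible choice $\ell_1=y_1$, $\ell_2=y_2$, $\ell_3=y_3$, $\ell_4=-p^2y_1-y_2-y_3$. For odd $p$, the primitive vector $\x=(p(1+p^k),1,1,1)$ has $m_{23}=m_{24}=m_{34}=0$ and $m_{12}=m_{13}=m_{14}=p^{k+2}(2+p^k)$. So $v_p(G)=k+2$ while $p\mid x_1$, and $k$ is arbitrary.

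What is true, and is all you need, is a dichotomy on the fixed coordinate $x_1$. If $p\mid G$ and $p\mid x_1$, primitivity forces $p\mid\mu_1$. Pick $l$ with $p\nmid x_l$, so that $v_p(\mu_1x_l^2)=v_p(\mu_1)$. If $v_p(G)>v_p(\mu_1)$, then $p^{v_p(G)}\mid \mu_lx_1^2-\mu_1x_l^2$ forces $v_p(\mu_l)+2v_p(x_1)=v_p(\mu_1)$. Hence, for every admissible $(x_3,x_4)$, either $v_p(G)\le v_p(\mu_1)$ or $v_p(x_1)\le v_p(\mu_1)/2$.

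In both cases the solutions of $\mu_1x_3^2\equiv\mu_3x_1^2 \pmod{p^{v_p(G)}}$ lie in $O(1)$ classes modulo $p^{v_p(G)-e_p}$. Here $p^{e_p}\ll_F1$, and $e_p=0$ unless $p\mid 2\mu_1\mu_3$. This recovers your claim that $x_3$ (and likewise $x_4$) lies in $O(2^{\omega(G)})$ classes modulo a divisor $G'$ of $G$ with $G/G'\ll_F1$. The rest of your argument then goes through. One small point: in the binary sum, when $\mu_1\mu_2\in\QQ^{\times2}$ you should factor into linear forms rather than appeal to units.

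The paper also reduces to binary differences. Its forms $r_i=Dx_i^2+a_ix_4^2$, coming from $\mathrm{adj}(C_0)\q(\x)$, are rational multiples of your $m_{i4}$. From there it proceeds differently:
\begin{itemize}
\item It drops primitivity and fixes only $x_4$ together with a dyadic size $|\mathbf{r}(\x)|\sim M$. This confines each of $x_1,x_2,x_3$ to $O(\sqrt M)$ integers.
\item It averages $\gcd(r_1,r_2,r_3)$ by writing it as $\sum_{d}\varphi(d)$ and using the crude bound $\#\{u\in I: d\mid Du^2+b\}\ll d^\ve(1+|I|/\sqrt d)$.
\item Since only moduli of size about $\sqrt d$ are claimed, this bound holds for every $b$ and needs no local analysis. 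The shortness of the intervals compensates, because $d(1+\sqrt{M/d})^2\ll M$.
\end{itemize}

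Your route instead lets two coordinates range over all of $[-X,X]$. That forces you to use the full modulus $\approx G$, hence primitivity and the $p$-adic dichotomy above. It also needs an extra arithmetic input to sum $X^2/|m_{12}|$, namely the representation bound for $\mu_2x_1^2-\mu_1x_2^2$ (with unit orbits in the indefinite case). Both arguments give $X^{2+\ve}$. The paper's is shorter and purely elementary, while yours replaces the dyadic decomposition in $|\q(\x)|$ by a divisor sum over $G\mid m_{12}$.
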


\begin{proof}
Let $C$ be the $3\times 4$ coefficient matrix of $q_1,q_2,q_3$, which we have already seen has rank $3$. The first three columns form a nonsingular matrix $C_0$. Write
$D=\det C_0$ and set
\[
(r_1(\x),r_2(\x),r_3(\x))^t
 =\adj(C_0)(q_1(\x),q_2(\x),q_3(\x))^t.
\]
Then
\[
r_i(\x)=Dx_i^2+a_ix_4^2
\]
for some fixed $a_i\in\ZZ$. 
Letting $\r(\x)=(r_1(\x),r_2(\x),r_3(\x))$, 
it follows from the identities $\mathbf r(\x)^t=\adj(C_0)\mathbf q(\x)^t$
and 
$\mathbf q(\x)^t=D^{-1}C_0\mathbf r(\x)^t$
that 
\[
\gcd (r_1(\x),r_2(\x), r_3(\x))\ll_F
\gcd (q_1(\x),q_2(\x),q_3(\x))\ll_F \gcd (r_1(\x),r_2(\x), r_3(\x))
\]
and 
\[
 |\r(\x)|\ll_F
|\q(\x)| \ll_F |\r(\x)|.
\]
Hence it is enough
to prove
\[
\sum_{\substack{|\x|\leq X\\\mathbf r(\x)\neq (0,0,0)}} 
 \frac{\gcd (r_1(\x),r_2(\x), r_3(\x))}{|\r(\x)|}
 \ll_{\ve,F}X^{2+\ve},
\]
where we have also dropped the primitivity condition.

We shall use the elementary estimate
\begin{equation}\label{eq:quadratic-congruence}
 \#\{u\in I\cap\ZZ:d\mid Du^2+b\}
 \ll_{\ve,F}d^\ve\left(1+\frac{|I|}{\sqrt d}\right)
\end{equation}
for any interval $I$ and any $b\in \ZZ$ and $d\in\NN$. To see this, fix one
solution $u_0$ and put $d'=d/\gcd(d,D)$. Every other solution satisfies
\[
d'\mid(u-u_0)(u+u_0).
\]
Distributing the prime-power factors of $d'$ between the two factors
places the solutions in $O_\ve(d^\ve)$ arithmetic progressions, each
of modulus $\gg_F \sqrt d$.

Now let $M$ run over powers of $2$ with $1\leq M\ll_F X^2$. Fix $x_4$ and write
\[
Q_i(u)=Du^2+a_ix_4^2,
\qquad
I_i=\{u\in[-X,X]:|Q_i(u)|\leq 2M\}.
\]
Each $I_i$ is a union of $O_F(1)$ intervals of total length $O_F(\sqrt{M})$.
We claim that
\begin{equation}\label{eq:gcd-average}
 \sum_{\substack{u_i\in I_i\cap\ZZ\\
 (Q_1(u_1),Q_2(u_2),Q_3(u_3))\neq (0,0,0)}}
 \hspace{-0.3cm}
 \gcd\left(Q_1(u_1),Q_2(u_2),Q_3(u_3)\right)
 \ll_{\ve,F}M^{\frac{3}{2}+\ve}.
\end{equation}
Indeed, using
\[
\gcd(m_1,m_2,m_3)=\sum_{d\mid m_1,m_2,m_3}\varphi(d)
\]
and choosing one nonzero coordinate, the left-hand side of
\eqref{eq:gcd-average} is at most
\[
\sum_{j=1}^3
\sum_{\substack{u_j\in I_j\cap \ZZ\\Q_j(u_j)\neq0}}
\sum_{d\mid Q_j(u_j)}
 \varphi(d)
 \prod_{i\neq j}
 \#\{u_i\in I_i\cap \ZZ: d\mid Q_i(u_i)\}.
\]
By~\eqref{eq:quadratic-congruence}, this is
\[
\ll_{\ve,F}
M^\ve
\sum_{j=1}^3
\sum_{u_j\in I_j\cap \ZZ}
\sum_{d\mid Q_j(u_j)}
 d\left(1+\frac{\sqrt{M}}{\sqrt d}\right)^2.
\]
Since $d\leq |Q_j(u_j)|\leq 2M$, it follows that 
\[
d\left(1+\frac{\sqrt{M}}{\sqrt d}\right)^2
\ll M.
\]
Since each $I_j$ contains $O_F(\sqrt{M})$ integers, the divisor bound gives
\eqref{eq:gcd-average}.

Consequently, the overall contribution from
$M\leq|\r(\x)|<2M$
is
\[
\ll_{\ve,F}
\frac1M\sum_{|x_4|\leq X}M^{\frac{3}{2}+\ve}
\ll_{\ve,F}XM^{\frac{1}{2}+\ve}.
\]
Summing over the dyadic values $M\ll_F X^2$ finally yields the statement of the lemma. 
\end{proof}

\subsection{Completion of the proof}

Let $Z'\subset \PP^3\times \PP^2$ be the smooth fourfold defined by 
$F(\x,\y)=0$, in which we recall 
the expressions \eqref{eq:graz} and \eqref{eq:graz'}.
Let 
\[
W'=\left\{(x,y)\in Z'(\QQ):
\q(\x)= (0,0,0) 
 \text{ or } \prod_{i=1}^4 \ell_i(\y)\in \QQ^{\times 2}
  \text{ or } \prod_{i=1}^4 \ell_i(\y) x_i\prod_{i=1}^3y_i=0\right\}.
\]
This is a thin set and we henceforth put $U'=Z'(\QQ)\setminus W'$.

We proceed by defining $N(U',\X,\Y)$ as in 
\eqref{eq:fox}. For $\X=(X_1,X_2,X_3,X_4)\in \RR_{>0}^4$ and $\Y=(Y_1,Y_2,Y_3)\in \RR_{>0}^3$, let 
	\[
	N(U',\X,\Y)=\#\left\{(\x,\y)\in \ZZ_{\prim}^4\times \ZZ_{\prim}^3: (x,y)\in U',~
	\x\sim \X,~ \y\sim \Y\right\},
	\]
	where $x=[\x]$ and $y=[\y]$ are the corresponding points in projective space and $U'=Z'(\QQ)\setminus W'$. The following two results are our primary estimates for 
	$N(U',\X,\Y)$.

\begin{lemma}\label{lem:boundquad}
Let $\ve>0$. Then
 \[
 N(U',\X,\Y)\ll_{\ve,F} X^{\frac 43+\ve}Y^3+X^{2+\ve}Y^{2+\ve}.
 \]
\end{lemma}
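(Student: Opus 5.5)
We fix $\y\in\ZZ_{\prim}^3$ with $\y\sim\Y$ and count points on the fibre $Z'_\y$. This fibre is the diagonal quadric surface $Q_\y(\x)=A_1(\y)x_1^2+\dots+A_4(\y)x_4^2=0$, where $A_i(\y)=\ell_i(\y)/g_\y$, so the coefficients are coprime. Since $(x,y)\notin W'$, we have $\prod_i\ell_i(\y)\neq 0$ and $\prod_i\ell_i(\y)\notin\QQ^{\times 2}$. Hence $Q_\y$ contains no rational lines, and the count on the fibre is exactly $N^0(Q_\y,2X)$. We then sum over $\y$.

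For each fibre we apply Theorem~\ref{prop:main} with $B\asymp X$. In the notation of Lemma~\ref{lem:sumelligen}, we have $\|Q_\y\|\ll_F Y$ (because $g_\y\ll_F 1$) and $T=T_\y$. The first term gives $X^{\frac43+\ve}$, and summing it over $O(Y^3)$ vectors $\y$ produces $X^{\frac43+\ve}Y^3$. The remaining terms carry denominators $\Delta_Q^{1/2}$, $m_Q^{1/6}\Delta_Q^{1/3}$, and so on. We write these as products $\prod_i|A_i(\y)|^{-e_i}$, numerator $\|Q\|\ll Y$. Consider the dominant $B^2$ term, namely
\[
X^2T_\y^{1/2}\|Q\|^{1/2}m_Q^{-1/6}\Delta_Q^{-1/3}.
\]
Here the minimal coefficient has exponent $1/2$ and the others have exponent $1/3$. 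Pairwise sums of exponents are therefore at most $5/6\le1$, so Lemma~\ref{lem:sumelligen} applies and gives
\[
\sum_\y\ll Y^{3-\frac12-1+\frac12+\ve}=Y^{2+\ve}.
\]
This yields the term $X^{2+\ve}Y^{2+\ve}$. To handle $m_Q$, which is not a fixed index, we bound $m_Q^{-1/6}\le\sum_i|A_i|^{-1/6}$ and treat each $i$ separately.

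The $B^{5/3}$ term is handled in the same way. It has $\|Q\|\ll Y$, with exponents $1/2$ and $1/3$, and gives $X^{5/3}Y^{3-\frac32+1}=X^{5/3}Y^{5/2}$. We need this to be bounded by $X^{4/3}Y^3+X^2Y^2$. If $X\le Y^{3/2}$, then $X^{5/3}Y^{5/2}\le X^{4/3}Y^3$ is equivalent to $X^{1/3}\le Y^{1/2}$, which holds. If $X\ge Y^{3/2}$, then $X^{5/3}Y^{5/2}\le X^2Y^2$ is equivalent to $Y^{1/2}\le X^{1/3}$, which holds.

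The $B^{7/6}$ term has $\|Q\|^{3/2}\min\{\|Q\|^{1/4},X^{1/6}\}$ and exponents $1/2$ on every coefficient. Each pairwise sum is then exactly $1$, which is allowed, so the sum gives
\[
X^{7/6}Y^{3/2+1/4}\,Y^{3-2}=X^{7/6}Y^{11/4}.
\]
We need this to be bounded by the two target terms. If $Y\le X^{2/3}$, then $X^{7/6}Y^{11/4}\le X^2Y^2$ is equivalent to $Y^{3/4}\le X^{5/6}$, which holds. Otherwise $X<Y^{3/2}$, and $X^{7/6}Y^{11/4}\le X^{4/3}Y^3$ would need $Y^{-1/4}\le X^{1/6}$, which always holds. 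If the $\min$ is taken with $X^{1/6}$ instead, the exponent on $Y$ is smaller and the bound only improves.

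I expect the main obstacle to be justifying the use of Lemma~\ref{lem:sumelligen}. The exponents must satisfy the pairwise-sum condition, and the nonconstant $m_Q$ has to be handled. We will have to check that $\|Q\|\ll_F Y$ bounds every appearance of $\|Q\|$ in the numerator, and that the exponents contributed by the $m_Q^{-1/6}$ factor never push a pairwise sum above $1$. The largest pairwise sum is $1/2+1/2=1$, from the $\Delta^{-1/2}$ term, so these checks appear tight but admissible.
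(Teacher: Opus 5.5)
Your proposal is correct and follows the paper's proof: you apply Theorem~\ref{prop:main} fibrewise to $Q_\y$, which has no rational lines because $\prod_i\ell_i(\y)$ is a nonzero non-square. You then bound $m_Q^{-1/6}\le\sum_i|A_i(\y)|^{-1/6}$ and sum over $\y$ via Lemma~\ref{lem:sumelligen} with exponents $(\tfrac12,\tfrac12,\tfrac12,\tfrac12)$ and $(\tfrac12,\tfrac13,\tfrac13,\tfrac13)$, and absorb $X^{5/3}Y^{5/2}$ into $X^{4/3}Y^3+X^2Y^2$. The differences are cosmetic: you bound the minimum in the $B^{7/6}$ term by $\|Q\|^{1/4}\ll Y^{1/4}$ rather than by $X^{1/6}$, and you use a case split where the paper uses $2uv\le u^2+v^2$.
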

\begin{proof}
 Let $\y\in \ZZ_{\prim}^3$ such that $\prod_i\ell_i(\y)\not \in \QQ^{\times 2}$ and $\prod_i\ell_i(\y)\neq 0$. Let $Q_{\y}(\x)=0$ be the diagonal quadratic surface 
 occurring in \eqref{eq:graz}, for fixed $\y$. Our assumption on $\y$ ensures that the surface contains no rational lines. Let $M_\y=\max_i\{\abs{A_i(\y)}\}$ and $m_{\y}=\min_i\{\abs{A_i(\y)}\}$,
 where we recall that 
 \[
 A_i(\y)=\frac{\ell_i(\y)}{\gcd(\ell_1(\y),\dots,\ell_4(\y))}.
 \]
It follows from Theorem~\ref{prop:main} that 
\begin{align*}
N(Q_{\y},X)\ll_{\ve} ~&
		X^{\frac43+\ve}
		+
		\frac{X^{\frac43+\ve}T_{\y}^{\frac12}
			M_\y^{\frac32}}{\abs{A_1(\y)A_2(\y)A_3(\y)A_4(\y)}^{\frac12}}
+					\frac{X^{\frac53+\ve}T_{\y}^{\frac12}M_\y}{m_{\y}^{\frac16}\abs{A_1(\y)A_2(\y)A_3(\y)A_4(\y)}^{\frac13}} 
		\\&		+
		\frac{X^{2+\ve}T_{\y}^{\frac12}M_\y^{\frac12}}{m_{\y}^{\frac16}\abs{A_1(\y)A_2(\y)A_3(\y)A_4(\y)}^{\frac13}}.
\end{align*}
But then, on introducing the sum over $\y$, we see that 
 \begin{align*}
 N(U',\X,\Y)\ll_\ve~& 
		X^{\frac43+\ve}Y^3
+\Sigma_1+\Sigma_2+\Sigma_3,
\end{align*}
where
\begin{align*}
\Sigma_1&=
\sum_{\substack{\y\in \ZZ_{\prim}^3\\ \abs{\y}\ll Y\\ 
 \prod\ell_i(\y) \neq 0}}
		\frac{X^{\frac43+\ve}T_{\y}^{\frac12}
			M_\y^{\frac32}}{\abs{A_1(\y)A_2(\y)A_3(\y)A_4(\y)}^{\frac12}},\\
\Sigma_2&=
\sum_{\substack{\y\in \ZZ_{\prim}^3\\ \abs{\y}\ll Y\\ 
\prod\ell_i(\y) \neq 0}}
		\frac{X^{\frac53+\ve}T_{\y}^{\frac12}M_\y}{m_{\y}^{\frac16}\abs{A_1(\y)A_2(\y)A_3(\y)A_4(\y)}^{\frac13}},
		\end{align*}
		and 
\[
\Sigma_3=
\sum_{\substack{\y\in \ZZ_{\prim}^3\\ \abs{\y}\ll Y\\ 
 \prod\ell_i(\y) \neq 0}}
		 		\frac{X^{2+\ve}T_{\y}^{\frac12}M_\y^{\frac12}}{m_{\y}^{\frac16}\abs{A_1(\y)A_2(\y)A_3(\y)A_4(\y)}^{\frac13}}.
				\]
Noting that $M_\y\ll_F Y$, it follows from Lemma~\ref{lem:sumelligen} with $a=b=c=d=1/2$ 
that 
\[
\Sigma_1\ll_{\ve,F}X^{\frac 43+\ve}Y^{\frac 32}Y^{1+\ve}.
\]
Next, on noting that 
\[
\frac{1}{m_{\y}^{\frac16}}\leq \frac{1}{\abs{A_1(\y)}^{\frac 16}}+\frac{1}{\abs{A_2(\y)}^{\frac 16}}+\frac{1}{\abs{A_3(\y)}^{\frac 16}}+\frac{1}{\abs{A_4(\y)}^{\frac 16}},
\]
it follows from Lemma~\ref{lem:sumelligen} with three exponents equal to $1/3$ and one equal to $1/2$, that
\[
\Sigma_2\ll_{\ve,F} X^{\frac 53+\ve}Y^{1}Y^{\frac 32+\ve} \quad \text{ and }
\quad 
\Sigma_3
\ll_{\ve,F} X^{2+\ve}Y^{\frac 12}Y^{\frac 32+\ve}.
\]

Finally, on putting everything together, we obtain 
\begin{align*}
 N(U',\X,\Y)&\ll_{\ve,F} X^{\frac 43+\ve}Y^3+X^{\frac 43+\ve}Y^{\frac 52+\ve}+X^{\frac 53+\ve}Y^{\frac 52+\ve}+X^{2+\ve}Y^{2+\ve}
 \\ &\ll_{\ve,F}X^{\frac 43+\ve}Y^{2+\ve}\left(Y+X^{\frac 13}Y^{\frac 12}+X^{\frac 23}\right).
\end{align*}
This completes the proof, since
$X^{\frac 13}Y^{\frac 12}\ll Y+X^{\frac 23}$, 
which follows from the elementary inequality $2uv\leq u^2+v^2$.
\end{proof}

\begin{lemma}\label{lem:boundline}
Let $\ve>0$. Then 
 \[
 N(U',\X,\Y)\ll_{\ve,F} X^{2+\ve}Y^{2}+X^4.
 \]
\end{lemma}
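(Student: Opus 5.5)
The plan is to fix $\x$ and count $\y$ on the fibre $Z'_\x\subset\PP^2$, which is the line $q_1(\x)y_1+q_2(\x)y_2+q_3(\x)y_3=0$. Points of $U'$ satisfy $\q(\x)\neq(0,0,0)$. Dividing by $g_\x$, the equation becomes $\h(\x)\cdot\y=0$ with $\h(\x)\in\ZZ_{\prim}^3$. The integer solutions $\y\in\ZZ^3$ of this equation form a rank-two lattice $\Lambda_\x$ of determinant $|\h(\x)|$, up to a bounded factor. The standard count of lattice points in a box of side $Y$ (via a reduced basis, as in~\cite[Lemma 2.3]{BrowningHeathBrown}) then gives
\[
\#\{\y\in\Lambda_\x\cap\ZZ_{\prim}^3:\abs{\y}\ll Y\}\ll 1+\frac{Y^2}{|\h(\x)|},
\]
and this holds uniformly in $\x$.

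Summing over $\x\in\ZZ_{\prim}^4$ with $|\x|\ll X$ and $\q(\x)\ne(0,0,0)$ gives two terms. The term $Y^2\sum_\x 1/|\h(\x)|$ is $O_{\ve,F}(X^{2+\ve}Y^2)$ by Lemma~\ref{lem:sumq}. The term $\sum_\x 1$ is $O(X^4)$. Together these give exactly $X^{2+\ve}Y^2+X^4$.

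The main point to check is the ``$1+$'' in the lattice count. When $|\h(\x)|$ is large compared with $Y^2$, there may still be one primitive $\y$ (up to sign), namely the shortest vector of $\Lambda_\x$. This is precisely why the term $X^4$ appears, so nothing sharper is needed. Note also that the side conditions defining $U'$ (the nonvanishing of $\prod_i \ell_i(\y)x_i\prod_i y_i$ and the non-square condition) can simply be dropped, since we only need an upper bound.

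Since Lemma~\ref{lem:sumq} does all the arithmetic work, the argument is essentially routine. The one remaining thing to verify is that the determinant of $\Lambda_\x=\{\y\in\ZZ^3:\h(\x)\cdot\y=0\}$ equals $|\h(\x)|$ for primitive $\h(\x)$. This is a standard fact: the lattice is the orthogonal complement of a primitive vector, so its covolume is the length of that vector, which is comparable to the sup norm $|\h(\x)|$.
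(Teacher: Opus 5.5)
Your proposal is correct and follows essentially the same route as the paper. The paper also fixes $\x$ with $\q(\x)\neq(0,0,0)$ and bounds the primitive points on the line $\h(\x)\cdot\y=0$ by $O(1+Y^2/|\h(\x)|)$; it simply quotes Lemma~2 of Heath-Brown for this, which is your reduced-basis count for primitive vectors of the rank-two lattice. It then sums over $\x$ using Lemma~\ref{lem:sumq}, exactly as you do.
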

\begin{proof}
We rewrite $F(\x,\y)$ as in \eqref{eq:graz'}, for 
quadratic forms $q_1,q_2,q_3\in \ZZ[x_1,\dots,x_4]$. 
Let $\x\in \ZZ_{\prim}^4$ be such that $(q_1(\x),q_2(\x),q_3(\x))$ is not the zero vector and 
recall the notation
$\h(\x)=(h_1(\x),h_2(\x),h_3(\x))\in \ZZ_{\prim}^3$, where
$h_i(\x)=q_i(\x)/\gcd(q_1(\x),q_2(\x),q_3(\x))$, for $1\leq i\leq 3$. 
The equation
\[
h_1(\x)y_1+h_2(\x)y_2+h_3(\x)y_3=0
\]
defines a line $L_{\x}\subset\mathbb P^2$.
It follows from~\cite[Lemma 2]{H-B} that 
 \[
 N(L_{\x},Y)\ll 1+ \frac{Y^{2}}{|\h(\x)|}.
 \]
Applying Lemma~\ref{lem:sumq}, we conclude that 
 \[
 N(U',\X,\Y)\ll \sum_{\substack{\x\in \ZZ_{\prim}^4\\ \abs{\x}\ll X\\ \h(\x)\neq (0,0,0)}}N(L_{\x},Y)\ll_{\ve,F} X^{2+\ve}Y^{2}+X^4. \qedhere
 \]
\end{proof}

\begin{proof}[Proof of Theorem~\ref{thm:aplllinear}]
 Splitting the intervals in dyadic boxes, we need to prove the bound for $N(U',\X,\Y)$ with $XY\leq B^{\frac 12}$. If $X\leq B^{\frac{9}{34}}$, then Lemma~\ref{lem:boundline} yields
 \[
 N(U',\X,\Y)\ll_{\ve,F} B^{\frac{18}{17}+\ve}.
 \]
 If $X\geq B^{\frac{9}{34}}$, on the other hand, then Lemma~\ref{lem:boundquad} yields
 \begin{align*}
 N(U',\X,\Y)\ll_{\ve,F} X^{\frac 43+\ve}Y^{3}+X^{2+\ve}Y^{2+\ve}
 &\ll_{\ve,F}
X^{\frac 43-3}B^{\frac{3}{2}+\ve}+B^{1+\ve}
\ll_{\ve,F} B^{\frac{18}{17}+\ve}. \qedhere
 \end{align*}
\end{proof}

\end{document}